\newcommand{\transpose}[1]{\ensuremath{\prescript{t}{}{#1}}}
\newtheorem{theorem}{Theorem}
\newtheorem{lemma}[theorem]{Lemma}
\newtheorem{proposition}[theorem]{Proposition}
\theoremstyle{definition}
\theoremstyle{remark}
\begin{document}

\title{Parameterizing roots of polynomial congruences}
\author{Matthew Welsh\thanks{Research supported by EPSRC grant EP/S024948/1}}
\date{6 July 2021}

\maketitle

\begin{abstract}
  We use the arithmetic of ideals in orders to parameterize the roots $\mu \pmod m$ of the polynomial congruence $F(\mu) \equiv 0 \pmod m$, $F(X) \in \mathbb{Z}[X]$ monic, irreducible and degree $d$.
  Our parameterization generalizes Gauss's classic parameterization of the roots of quadratic congruences using binary quadratic forms, which had previously only been extended to the cubic polynomial $F(X) = X^3 - 2$.
  We show that only a special class of ideals are needed to parameterize the roots $\mu \pmod m$, and that in the cubic setting, $d = 3$, general ideals correspond to pairs of roots $\mu_1 \pmod{m_1}$, $\mu_2 \pmod {m_2}$ satisfying $\gcd(m_1, m_2, \mu_1 - \mu_2) = 1$.
  At the end we illustrate our parameterization and this correspondence between roots and ideals with a few applications, including finding approximations to $\frac{\mu}{m} \in \mathbb{R}/ \mathbb{Z}$, finding an explicit Euler product for the co-type zeta function of $\mathbb{Z}[2^{\frac{1}{3}}]$, and computing the composition of cubic ideals in terms of the roots $\mu_1 \pmod {m_1}$ and $\mu_2 \pmod {m_2}$. 
\end{abstract}

\newpage

\tableofcontents

\newpage

\section{Introduction}
\label{sec:intro}

Let
\begin{equation}
  \label{eq:Fdef}
  F(X) = X^d + a_1 X^{d-1} + \cdots + a_d \in \mathbb{Z}[X]
\end{equation}
be an irreducible polynomial.
We call the residue classes $\mu \pmod {m}$ satisfying $F(\mu) \equiv 0 \pmod m$ the roots of the polynomial congruence $F(\mu) \equiv 0 \pmod m$.
In the quadratic setting, $d = 2$, there has been a lot of interest in studying the statistical properties of the sequence of $\frac{\mu}{m} \in \mathbb{R} / \mathbb{Z}$.
For example the work of Hooley \cite{Hooley1963}, Iwaniec \cite{Iwaniec1978}, Bykovskii \cite{Bykovskii1984}, Hejhal \cite{Hejhal1986}, Sarnak \cite{Sarnak1990}, Duke, Friedlander, Iwaniec \cite{DukeFriedlanderIwaniec1995}, \cite{DukeFriedlanderIwaniec2012}, and Toth \cite{Toth1997} all concern the equidistribution of this sequence and important subsequences.
Other statistics, such as upper bounds in short intervals, see Fouvry, Iwaniec \cite{FouvryIwaniec1997} and Friedlander, Iwaniec \cite{FriedlanderIwaniec1998} have also been of interest.

Much less is known in the cubic and higher degree setting, $d \geq 3$.
Hooley \cite{Hooley1964} has proven that the $\frac{\mu}{m}$ are still equidistributed modulo $1$, however his technique has more to do with the Chinese remainder theorem than roots of congruences, see the recent work of Kowalski, Soundararajan \cite{KowalskiSoundararajan2020}.
Consequently, Hooley's results are not nearly strong enough for applications like those in \cite{Iwaniec1978}, \cite{DukeFriedlanderIwaniec1995}, and \cite{Toth1997}.

At the heart of all of the cited work on the roots of quadratic congruences is the parameterization, essentially due to Gauss, of the modulus $m$ and roots $\mu \pmod m$ by means of binary quadratic forms or, what's more or less the same, ideals in quadratic orders.
Further, in the strongest results on the roots of quadratic congruences, i.e. \cite{Bykovskii1984}, \cite{Hejhal1986}, \cite{DukeFriedlanderIwaniec1995}, and \cite{Toth1997}, this parameterization provides an entrance for the spectral theory of $\mathrm{SL}(2)$, thus giving spectacular applications of this theory to arithmetic.
One might hope to find generalizations of this classic parameterization in the cubic and higher degree settings, and in this way obtain statistical results on the roots of higher degree polynomial congruences that go beyond those of \cite{Hooley1964}.
This was attempted, for example, in another work of Hooley \cite{Hooley1978}, but even with a parameterization of the roots $\mu^3 \equiv 2 \pmod m$, Hooley needed to appeal to out-of-reach conjectures to conclude nontrivial results.
There has since been some unconditional results in this direction in the works of Heath-Brown \cite{Heath-Brown2000}, where the largest prime factor of $n^3 -2$ is considered, and \cite{Heath-Brown2001}, where it is proved that $x^3 - 2y^3$ is infinitely often a prime number.
Both of these results in a sense use the parameterization of \cite{Hooley1978}.
The more recent works \cite{Dartyge2015} and \cite{delaBreteche2015} extend the method of \cite{Heath-Brown2000} to special quartic polynomials, their first step being to develop a parameterization of the roots of these degree four polynomials.
We remark that their method seems to require that the polynomial has Galois group $\mathbb{Z}/ 2 \mathbb{Z} \times \mathbb{Z} / 2\mathbb{Z}$ even though, as we shall see, one can parameterize the roots even when the polynomial is not Galois.
Understanding this requirement as well as extending the results of \cite{Heath-Brown2000} to general cubic polynomials are interesting questions that are unfortunately beyond the scope of this paper.

Nevertheless, the goal of the present work is to first generalize the parameterization of \cite{Hooley1978} to general polynomials, second to refine the parameterization in the cubic setting to one the author hopes is more amenable to analysis, and third to illustrate how one can use these parameterizations with some applications.
As one might expect, we find it far easier in the higher degree setting to work with ideals in degree $d$ orders rather than $d$-ary $d$-ic forms.

To parameterize the roots of $F(\mu) \equiv 0 \pmod m$, we naturally work with the order $\mathbb{Z}[\alpha]$, where $\alpha$ is a root of $F(\alpha) = 0$, considered abstractly, for the moment.
It turns out that one can parameterize all the $m$ and roots $\mu \pmod m$ using only a special class of ideals $ I \subset \mathbb{Z}[\alpha]$: those with $\mathbb{Z}[\alpha] / I$ additively cyclic.
The following proposition, proved in \ref{sec:generalinvariantfactors}, gives a useful characterization of these ideals in terms of a particular $\mathbb{Z}$-basis of $I$. 

\begin{proposition}
  \label{proposition:invariantfactors}
  Let $I$ be the sublattice of $\mathbb{Z}[\alpha]$ with basis $\{\beta_1, \dots, \beta_d\}$ given by
  \begin{equation}
    \label{eq:genhnfbasis1}
    \beta_i = \sum_{j=1}^d b_{ij} \alpha^{d-j+1},
  \end{equation}
  where the matrix $B = (b_{ij})_{1\leq i,j\leq d}$ is in upper-triangular Hermite normal form, meaning $b_{ij} = 0$ if $j < i$, $b_{jj} > 0$, and $0\leq b_{ij} < b_{jj}$ for all $i < j$.
  Then for $I$ to be an ideal of $\mathbb{Z}[\alpha]$, it is necessary that $b_{ii}$ divides $b_{ij}$ and $b_{jj}$ for all $i \leq j \leq d$.
  In particular, if $I$ is an ideal, then the $b_{ii}$ are the invariant factors of $\mathbb{Z}[\alpha]/I$. 
\end{proposition}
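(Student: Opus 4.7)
My plan is to exploit the ideal condition $\alpha I \subseteq I$ to derive the divisibility relations via back-substitution in the upper-triangular basis. In matrix form, $I$ being an ideal is equivalent to $B A^T = N B$ for some integer matrix $N$, where $A$ is the matrix of multiplication by $\alpha$ in the ``basis'' $(\alpha^d, \alpha^{d-1}, \ldots, \alpha) = (\gamma_1, \ldots, \gamma_d)$ used to define $B$. Since $B$ is upper triangular with positive diagonal, one can read off $N$ column by column and impose integrality of its entries to extract divisibility conditions. A direct computation gives $(BA^T)_{ij} = -a_j b_{i1} + b_{i,j+1}$ (with $b_{i,d+1}=0$), which simplifies to $b_{i,j+1}$ when $i\geq 2$.

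Proceeding column by column through $BA^T = NB$: at column $j = 1$, the equations $N_{i1} b_{11} = (BA^T)_{i1}$ yield $b_{11} \mid b_{12}$ (row $i=1$) and $b_{11} \mid b_{22}$ (row $i=2$); at column $j$, using the integralities established for columns $1, \ldots, j-1$, one obtains the adjacent divisibilities $b_{jj} \mid b_{j, j+1}$ (row $i=j$) and $b_{jj} \mid b_{j+1, j+1}$ (row $i=j+1$). The non-adjacent diagonal divisibilities $b_{ii} \mid b_{jj}$ for $j > i+1$ then follow by transitivity from the resulting chain $b_{11} \mid b_{22} \mid \cdots \mid b_{dd}$.

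For the non-adjacent row divisibilities $b_{ii} \mid b_{ij}$ with $j > i+1$, the one-step identity $\alpha \beta_i \in I$ is insufficient, so I would exploit the stronger ideal property $\alpha^{j-i} \beta_j \in I$. Expanding, this element involves only powers $\alpha^{d-i+1}, \ldots, \alpha^{1+j-i}$, all lying in $\{\alpha, \ldots, \alpha^d\}$ and requiring no reduction by the minimal polynomial; its $\gamma_\ell$-coordinate is $b_{j,\ell+j-i}$ for $i \leq \ell \leq d-j+i$ and zero otherwise. Writing $\alpha^{j-i}\beta_j = \sum_\ell c_\ell \beta_\ell$ and back-substituting, the vanishing coordinates for $\ell < i$ force $c_1 = \cdots = c_{i-1} = 0$, and the equation at $\gamma_i$ gives $c_i = b_{jj}/b_{ii}$, rederiving $b_{ii} \mid b_{jj}$. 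Continuing through positions $\gamma_{i+1}, \ldots, \gamma_{j-1}$, the previously derived divisibilities (adjacent conditions together with row-$i$ conditions for smaller column indices) can be shown to render each intermediate $c_\ell$ integral; the equation at $\gamma_j$ then ultimately reduces, after these cancellations, to the clean integrality condition $b_{ii} \mid b_{ij}$.

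The main technical obstacle is the bookkeeping at this inner back-substitution: one must verify that at each intermediate position $\gamma_\ell$ ($i < \ell < j$), the inductively established divisibilities suffice to clear all non-integer contributions, leaving a single clean condition at the final position $\gamma_j$. A double induction --- outer on the row index $i$ descending from $d-1$ to $1$, inner on $j-i$ --- handles this cleanly. The ``In particular'' clause then follows immediately: once the divisibility chain $b_{11} \mid b_{22} \mid \cdots \mid b_{dd}$ and the row-absorption conditions $b_{ii} \mid b_{ij}$ are established, $B$ is reducible by elementary operations to $\mathrm{diag}(b_{11}, \ldots, b_{dd})$, which is in Smith normal form, so the $b_{ii}$ are identified as the invariant factors of the quotient.
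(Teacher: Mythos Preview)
Your approach is correct and genuinely different from the paper's. The paper works entirely with the single ideal condition $\alpha I\subset I$, i.e.\ with the integrality of $N=BAB^{-1}$, and extracts each divisibility $b_{(i-j)(i-j)}\mid b_{(i-j)i}$ from the $(i-j+1,i)$ entry of $N$; this forces the paper to compute entries of $B^{-1}$ via explicit cofactor formulas (equation~(\ref{eq:bprime3})) and to do a cofactor expansion along the top row of a minor at each inductive step. Your idea of using the higher-power membership $\alpha^{j-i}\beta_j\in I$ is cleaner in that the shifted element has support $\{\gamma_i,\dots,\gamma_{d-j+i}\}$ with coefficients read directly from row $j$ of $B$, so no inverse of $B$ is ever needed. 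Both arguments run a double induction of the same shape (offset, then row index), and both close the ``in particular'' clause in the same way, by column-reducing $B$ to $\mathrm{diag}(b_{11},\dots,b_{dd})$.

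One point to tighten: in your back-substitution for $\alpha^{j-i}\beta_j=\sum_\ell c_\ell\beta_\ell$, saying the intermediate $c_\ell$ are \emph{integral} is not enough to isolate the condition $b_{ii}\mid b_{ij}$ at position $\gamma_j$. What you actually need (and what your inductive hypotheses do give) is the stronger fact that $c_\ell$ is divisible by $b_{jj}/b_{\ell\ell}$ for each $i\le\ell<j$: one checks inductively on $\ell$ that $c_\ell b_{\ell\ell}$ equals $b_{j,\ell+j-i}$ minus terms $c_k b_{k\ell}$ with $i\le k<\ell$, and each of these is divisible by $b_{jj}$ using the outer hypothesis $b_{kk}\mid b_{k\ell}$ (for $k>i$), the inner hypothesis $b_{ii}\mid b_{i\ell}$ (for $k=i$), and the stronger claim for smaller $\ell$. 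With this in hand, the $\gamma_j$-equation indeed reduces to $b_{jj}\mid (b_{jj}/b_{ii})\,b_{ij}$, i.e.\ $b_{ii}\mid b_{ij}$. State and prove this stronger divisibility of the $c_\ell$ explicitly; otherwise the final cancellation is unjustified. (A cosmetic remark: the exponent $d-j+1$ in the statement appears to be a typo for $d-j$, so your ambient ``basis'' $\{\alpha,\dots,\alpha^d\}$ should be $\{1,\dots,\alpha^{d-1}\}$; your computations are unaffected.)
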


This proposition implies that those ideals $I \subset \mathbb{Z}[\alpha]$ having $\mathbb{Z}[\alpha] / I \cong \mathbb{Z} / m\mathbb{Z}$ as additive groups have a basis $\{\beta_1, \dots, \beta_d \}$ of the form
\begin{equation}
  \label{eq:betabasis}
  \begin{pmatrix}
    \beta_1 \\
    \vdots \\
    \beta_{d-1} \\
    \beta_d
  \end{pmatrix}
  =
  \begin{pmatrix}
    1 & \cdots & 0 & * \\
    \vdots & \ddots & \vdots & \vdots \\
    0 & \cdots & 1 & * \\
    0 & \cdots & 0 & m
  \end{pmatrix}
  \begin{pmatrix}
    \alpha^{d-1} \\
    \vdots \\
    \alpha \\
    1
  \end{pmatrix}
  .
\end{equation}
Our first theorem establishes a correspondence between these special ideals and the roots $\mu \pmod m$.

\begin{theorem}
  \label{theorem:correspondence}
  Let $I \subset \mathbb{Z}[\alpha]$ be an ideal such that the quotient $\mathbb{Z}[\alpha]/I$ is additively cyclic.
  Then $I$ has a unique basis $\{\beta_1, \dots, \beta_d\}$ of the form
  \begin{equation}
    \label{eq:correspondence}
    \begin{pmatrix}
      \beta_1  \\
      \vdots \\
      \beta_{d-1} \\
      \beta_d 
    \end{pmatrix} =
    \begin{pmatrix}
      1 & \cdots & 0 & -\mu^{d-1} \\
      \vdots &\ddots & \vdots & \vdots \\
      0 & \cdots & 1 & -\mu \\
      0 & \cdots & 0 & m
    \end{pmatrix}
    \begin{pmatrix}
      \alpha^{d-1} \\
      \vdots \\
      \alpha \\
      1
    \end{pmatrix}
    .
  \end{equation}
  where $m > 0$ and $\mu$ is, in order for uniqueness to hold, considered as a residue class modulo $m$ satisfying the polynomial congruence $F(\mu) \equiv 0 \pmod m$.

  Conversely, given an integer $m> 0$ and $\mu \pmod m$ satisfying $F(\mu) \equiv 0 \pmod m$, the sublattice $I$ of $\mathbb{Z}[\alpha]$ given by the basis $\{\beta_1, \dots, \beta_d\}$ as in (\ref{eq:correspondence}) is an ideal such that $\mathbb{Z}[\alpha]/I$ is cyclic. 
\end{theorem}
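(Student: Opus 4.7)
The plan is to combine Proposition \ref{proposition:invariantfactors} with the ideal property $\alpha I \subset I$ and the defining relation $F(\alpha) = 0$ to read off both $\mu$ and the congruence $F(\mu) \equiv 0 \pmod m$ directly from the Hermite normal form of the basis. I start with the forward direction: let $I \subset \mathbb{Z}[\alpha]$ be an ideal whose quotient is additively cyclic. Because the invariant factors of $\mathbb{Z}[\alpha]/I$ are then $(1, \dots, 1, m)$, Proposition \ref{proposition:invariantfactors} furnishes a unique basis of the shape (\ref{eq:betabasis}), so I may write $\beta_i = \alpha^{d-i} + c_i$ for $1 \leq i \leq d-1$ and $\beta_d = m$, with $0 \leq c_i < m$. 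The natural candidate for the root is then $\mu \equiv -c_{d-1} \pmod m$.

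To recover the full shape (\ref{eq:correspondence}), I exploit the ideal condition $\alpha \beta_i \in I$ for $2 \leq i \leq d-1$. Substituting $\alpha^{d-i+1} = \beta_{i-1} - c_{i-1}$ and $\alpha = \beta_{d-1} - c_{d-1}$ into $\alpha \beta_i = \alpha^{d-i+1} + c_i \alpha$ gives
\[
\alpha \beta_i = \beta_{i-1} + c_i \beta_{d-1} - (c_{i-1} + c_i c_{d-1}).
\]
Since $\alpha \beta_i \in I$ and $I \cap \mathbb{Z} = m\mathbb{Z}$ (visible from the basis), the integer $c_{i-1} + c_i c_{d-1}$ lies in $m\mathbb{Z}$, giving the recursion $c_{i-1} \equiv \mu c_i \pmod m$. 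A downward induction starting from $c_{d-1} \equiv -\mu$ then yields $c_j \equiv -\mu^{d-j} \pmod m$ for $1 \leq j \leq d-1$, matching (\ref{eq:correspondence}). To force $F(\mu) \equiv 0 \pmod m$, I apply the same technique with $i=1$: using $\alpha^d = -a_1 \alpha^{d-1} - \dots - a_d$ from $F(\alpha) = 0$, I expand $\alpha \beta_1 = \alpha^d + c_1 \alpha$ as a $\mathbb{Z}$-linear combination of $\beta_1, \dots, \beta_{d-1}$ plus the integer
\[
\sum_{i=1}^{d-1} a_i c_i - a_d - c_1 c_{d-1} \equiv -a_1 \mu^{d-1} - \dots - a_{d-1}\mu - a_d - \mu^d = -F(\mu) \pmod m,
\]
whose membership in $I \cap \mathbb{Z} = m\mathbb{Z}$ gives the claim. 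Uniqueness of $\mu$ as a residue class modulo $m$ is inherited from uniqueness of the Hermite normal form.

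For the converse, given $m > 0$ and $\mu$ with $F(\mu) \equiv 0 \pmod m$, I define $I$ by the basis (\ref{eq:correspondence}) and verify the two assertions. For the ideal property, I check $\alpha \beta_i \in I$ in each case: the case $i = d$ follows from $\alpha \beta_d = m(\beta_{d-1} - c_{d-1}) \in I$; for $2 \leq i \leq d-1$, the identity $c_{i-1} + c_i c_{d-1} = -\mu^{d-i+1} + (-\mu^{d-i})(-\mu) = 0$ now holds exactly, so $\alpha \beta_i = \beta_{i-1} + c_i \beta_{d-1} \in I$; and the case $i = 1$ is precisely where $F(\mu) \equiv 0 \pmod m$ is invoked, the leftover integer being $-F(\mu) \in m\mathbb{Z} = \mathbb{Z}\beta_d$. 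For additive cyclicity, the basis shows $I \cap \mathbb{Z} = m\mathbb{Z}$, and since $\alpha^j \equiv \mu^j \pmod I$ for all $j \geq 0$, the class of $1$ generates $\mathbb{Z}[\alpha]/I$, so the quotient is cyclic of order $m$.

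The main obstacle is not conceptual but notational: keeping the two indexings $i$ and $d-i$ straight in the expansion of $\alpha \beta_i$ and matching the resulting constant term, in the $i=1$ case, cleanly to $-F(\mu)$ modulo $m$. Once the recursion $c_{i-1} \equiv \mu c_i \pmod m$ is secured, both directions follow essentially by the same calculation, with the role of $F(\mu) \equiv 0 \pmod m$ serving as the single obstruction that distinguishes ideals from arbitrary sublattices with basis (\ref{eq:correspondence}).
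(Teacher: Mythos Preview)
Your proof is correct and follows essentially the same approach as the paper: both arguments reduce the ideal condition to checking that $\alpha\beta_i \in I$ for each basis element, extract the recursion $c_{i-1} \equiv \mu c_i \pmod m$ from the cases $2 \le i \le d-1$, and read off $F(\mu) \equiv 0 \pmod m$ from the case $i=1$. The paper merely packages the same computation as the integrality of the matrix $BAB^{-1}$, whereas you work directly with the elements $\alpha\beta_i$; the content is identical.
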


We remark that in the quadratic case, requiring $\mathbb{Z}[\alpha] / I$ to be cyclic is equivalent to the ideal $I$ not having any rational integer divisors.
This restriction is thus relatively minor in this setting; any ideal can be factored uniquely as a rational integer times an ideal $I$ such that $\mathbb{Z}[\alpha] / I$ is cyclic.
Theorem \ref{theorem:correspondence} then gives a characterization of all ideals in the quadratic order $\mathbb{Z}[\alpha]$.
For example, it easily implies that the Dedekind zeta function,
\begin{equation}
  \label{eq:dedekindzeta}
  \zeta_{\mathbb{Z}[\alpha]}(s) = \sum_{I} N(I)^{-s}, \quad \mathrm{Re}(s) > 1,
\end{equation}
where the sum is over non-zero ideals $I$ of $\mathbb{Z}[\alpha]$ and $N(I)$ denotes the norm of $I$, can be expressed as
\begin{equation}
  \label{eq:dedekindzeta1}
  \zeta_{\mathbb{Z}[\alpha]}(s) = \zeta(2s)\sum_{m\geq 1} \frac{1}{m^s} \# \{\mu \pmod m : F(\mu) \equiv 0 \pmod m \}.
\end{equation}

When $d\geq 3$ however, $I$ not having rational integer divisors is necessary but not sufficient for $\mathbb{Z}[\alpha] / I$ to be cyclic.
This can be seen, for example, by considering a degree two prime or a product of distinct, conjugate degree one primes.
It is therefore of interest to find extensions of theorem \ref{theorem:correspondence} that give correspondences relating to more general classes of ideals.
We do not carry this out in generality here, but we do obtain a satisfactory result in the cubic setting, $d = 3$. 

\begin{theorem}
  \label{theorem:cubiccorrespondence}
  Let $I \subset \mathbb{Z}[\alpha]$ be an ideal that is not divisible by any rational integers. Then $I$ has a basis $\{\beta_1, \beta_2, \beta_3\}$ in the form
  \begin{equation}
    \label{eq:idealcorrespondence}
    \begin{pmatrix}
      \beta_1 \\
      \beta_2 \\
      \beta_3
    \end{pmatrix} =
    \begin{pmatrix}
      1 & \mu_1 + a_1 & \lambda \\
      0 & m_1 & -\mu_2 m_1 \\
      0 & 0 & m_1m_2
    \end{pmatrix}
    \begin{pmatrix}
      \alpha^2 \\
      \alpha \\
      1
    \end{pmatrix}
  \end{equation}
  where $a_1$ comes from (\ref{eq:Fdef}), $m_1$, $m_2$ are positive integers and $\mu_1 \pmod {m_1}$, $\mu_2 \pmod {m_2}$ satisfy
  \begin{equation}
    \label{eq:cubiccongruence2}
    F(\mu_1) \equiv 0 \pmod {m_1}, \quad F(\mu_2) \equiv 0 \pmod {m_2}.
  \end{equation}
  Moreover, if $\gcd(m_1,m_2,D) = 1$, where $D$ is the discriminant of $F$, then $\gcd(m,n,\mu_1 - \mu_2) = 1$ and the basis (\ref{eq:idealcorrespondence}) is unique modulo integral, upper-triangular unipotent matrices acting on the left. 
  
  Conversely, if $m_1$, $m_2$ are positive integers and $\mu_1 \pmod {m_1}$, $\mu_2\pmod {m_2}$ satisfy (\ref{eq:cubiccongruence2}) and $\gcd(m_1,m_2,\mu_1 - \mu_2) = 1$, then there exists a $\lambda \pmod {m_1m_2}$, unique for fixed representatives of $\mu_1\pmod{m_1}, \mu_2\pmod{m_2}$, such that the sublattice $I$ with basis $\{\beta_1, \beta_2, \beta_3\}$ given in (\ref{eq:idealcorrespondence}) is an ideal of $\mathbb{Z}[\alpha]$. 
\end{theorem}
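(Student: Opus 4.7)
\emph{Forward direction.} The strategy is to combine Proposition~\ref{proposition:invariantfactors} with the ideal condition $\alpha I \subseteq I$ (which for a $\mathbb{Z}$-sublattice of $\mathbb{Z}[\alpha]$ is equivalent to being an ideal) to force both the shape of the basis and the congruences on its entries. Proposition~\ref{proposition:invariantfactors} supplies a basis $\beta_1 = b_{11}\alpha^2 + b_{12}\alpha + b_{13}$, $\beta_2 = b_{22}\alpha + b_{23}$, $\beta_3 = b_{33}$ in Hermite normal form with $b_{11}\mid b_{22}\mid b_{33}$. Since $b_{11}$ divides every entry and $I$ has no rational integer divisors, $b_{11} = 1$; I set $m_1 = b_{22}$ and $m_1m_2 = b_{33}$. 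Imposing $\alpha\beta_3\in I$ via $\alpha\cdot m_1m_2\in \mathbb{Z}\beta_2 + \mathbb{Z}\beta_3$ forces $m_1\mid b_{23}$, so I write $b_{23} = -\mu_2 m_1$. For $i = 2, 1$, after substituting $\alpha^3 = -a_1\alpha^2 - a_2\alpha - a_3$ and using $\alpha^2 = \beta_1 - b_{12}\alpha - b_{13}$, I obtain with $\mu_1 := b_{12} - a_1$ and $\lambda := b_{13}$ the three congruences
\begin{align*}
    \lambda &\equiv \mu_1^2 + a_1 \mu_1 + a_2 \pmod{m_1}, \\
    \lambda &\equiv -\mu_2^2 - \mu_1 \mu_2 - a_1 \mu_2 \pmod{m_2}, \\
    \lambda(\mu_2 - \mu_1) &\equiv a_3 + \mu_2(\mu_1^2 + a_1 \mu_1 + a_2) \pmod{m_1m_2}.
\end{align*}
Reducing the third modulo $m_1$ via the first, and modulo $m_2$ via the second, gives $F(\mu_1)\equiv 0\pmod{m_1}$ and $F(\mu_2)\equiv 0\pmod{m_2}$, establishing (\ref{eq:idealcorrespondence}) and (\ref{eq:cubiccongruence2}).

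\emph{Coprimality and uniqueness.} If a prime $p$ divides $\gcd(m_1,m_2,\mu_1-\mu_2)$, I set $\mu := \mu_1\equiv\mu_2\pmod p$ and subtract the first two congruences modulo $p$; the $\lambda$'s cancel, leaving $3\mu^2 + 2a_1\mu + a_2 = F'(\mu)\equiv 0\pmod p$. Combined with $F(\mu)\equiv 0\pmod p$ this forces $p\mid D$, contradicting $\gcd(m_1,m_2,D)=1$. For uniqueness, any two bases of $I$ in the form (\ref{eq:idealcorrespondence}) share the diagonal $(1,m_1,m_1m_2)$, these being the invariant factors of $\mathbb{Z}[\alpha]/I$, so the unimodular integer change-of-basis matrix must be upper-triangular with unit diagonal, i.e., upper-triangular unipotent.

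\emph{Converse and main obstacle.} Given $\mu_1, \mu_2$ satisfying (\ref{eq:cubiccongruence2}) with $\gcd(m_1,m_2,\mu_1-\mu_2)=1$, I would solve the three congruences above for $\lambda$. Compatibility of the first two modulo $\gcd(m_1, m_2)$ follows from the factorization
\[
F(\mu_1) - F(\mu_2) = (\mu_1 - \mu_2)\bigl(\mu_1^2 + \mu_1\mu_2 + \mu_2^2 + a_1(\mu_1+\mu_2) + a_2\bigr)
\]
together with the invertibility of $\mu_1 - \mu_2$ modulo $\gcd(m_1, m_2)$, producing $\lambda$ modulo $\mathrm{lcm}(m_1, m_2)$. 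The third congruence, which by the forward calculation holds automatically modulo $\mathrm{lcm}(m_1, m_2)$, then determines the remaining $\gcd(m_1, m_2)$-fold ambiguity in $\lambda$ via a linear equation in the lift parameter with coefficient $\mu_2 - \mu_1$, again invertible. The resulting sublattice is an ideal because the three congruences are exactly equivalent to $\alpha\beta_i\in I$ for $i = 1,2,3$. I expect this last lifting step to be the main obstacle: one must verify that the third congruence is neither redundant nor overdetermined modulo $m_1m_2$, but rather carries exactly the $\gcd(m_1, m_2)$-worth of extra information needed to pin down $\lambda$ uniquely modulo $m_1m_2$.
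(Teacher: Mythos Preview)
Your proposal is correct and follows essentially the same route as the paper. The paper computes the matrix $BAB^{-1}$ explicitly (your condition $\alpha\beta_i\in I$ is literally the statement that the $i$th row of $BAB^{-1}$ is integral) and extracts the same three congruences on $\lambda$ from the $(1,2)$, $(2,3)$, and $(1,3)$ entries; it then parametrizes the solutions of the first two by a free parameter $\kappa\pmod{\gcd(m_1,m_2)}$ via an explicit CRT formula and shows the third congruence reduces to a linear condition on $\kappa$ with coefficient $\mu_1-\mu_2$, exactly your ``lift parameter'' argument. Your hesitation about the final lifting step is unwarranted: since the third congruence is automatically satisfied modulo $m_1$ and modulo $m_2$ once $F(\mu_i)\equiv 0$, it descends to a single linear equation modulo $\gcd(m_1,m_2)$ in the lift parameter, and invertibility of $\mu_1-\mu_2$ there gives existence and uniqueness in one stroke---this is precisely the paper's equation for $\kappa$.
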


Before stating our next theorem, which concerns the parameterization of the roots $\mu \pmod m$, we fix some notation.
First, we no longer think of $\alpha$ as an abstract solution to $F(\alpha) = 0$, but rather as a vector in $\mathbb{C}^d$ with coordinates the $d$ embeddings into $\mathbb{C}$ of the abstract root $\alpha$.
Of course this $\alpha$, and all of the rational expressions in $\alpha$, i.e. $\mathbb{Q}(\alpha)$, are contained in a smaller space, $C_{r_1, r_2} \subset \mathbb{C}^d$, which we call the signature space of $\alpha$.
Here $r_1$ is the number of real embeddings of $\alpha$ and $r_2$ is the number of conjugate pairs of complex embeddings, so $d = r_1 + 2r_2$.
$C_{r_1, r_2}$ is defined to be the subset of $\mathbb{C}^d$ where the first $r_1$ coordinates are real numbers, and the next $2r_2$ are arranged in conjugate pairs.
So if $\xi \in C_{r_1,r_2}$ and we denote the coordinates of $\xi$ by $\xi^{(1)}, \dots, \xi^{(d)}$, then we have $\xi^{(j)} \in \mathbb{R}$ for $1\leq j \leq r_1$, and $\xi^{(r_1 + j + 1)} = \overline{\xi^{(r_1 + j)}}$ for $1 \leq j \leq 2r_2$ odd.
We further denote by $C_{r_1,r_2}^+ \subset C_{r_1,r_2}$ the set of $\xi \in C_{r_1,r_2}$ for which the real coordinates $\xi^{(1)}, \dots, \xi^{(r_1)}$ are positive.

We let $h^+(\alpha)$ denote the narrow class number of $\mathbb{Z}[\alpha]$, and we fix integral ideal representatives $I_l$, $1\leq l\leq h^+(\alpha)$, of the narrow ideal classes, so every invertible ideal $I \subset \mathbb{Z}[\alpha]$ can be written as $I = \xi I_l$ for some $\xi \in I_l^{-1} \cap C^+_{r_1,r_2}$ and unique $l$.
We fix $\mathbb{Z}$-bases $\{\beta_{l1}, \dots, \beta_{ld}\}$ of the $I_l$ and we set
\begin{equation}
  \label{eq:Bldef}
  \mathfrak{B}_l =
  \begin{pmatrix}
    \beta_{l1} \\
    \vdots \\
    \beta_{ld}
  \end{pmatrix}
  =
  \begin{pmatrix}
    \beta_{l1}^{(1)} & \cdots & \beta_{l1}^{(d)} \\
    \vdots & \ddots & \vdots \\
    \beta_{ld}^{(1)} & \cdots  & \beta_{ld}^{(d)}
  \end{pmatrix}
  .
\end{equation}
We further stipulate that $I_1 = \mathbb{Z}[\alpha]$, and $\{\beta_{11}, \dots, \beta_{1d}\} = \{ \alpha^{d-1}, \dots, \alpha, 1\}$, so
\begin{equation}
  \label{eq:B1def}
  \mathfrak{B}_1 =
  \begin{pmatrix}
    (\alpha^{(1)})^{d-1} & \cdots & (\alpha^{(d)})^{d-1} \\
    \vdots & \ddots & \vdots \\
    \alpha^{(1)} & \cdots & \alpha^{(d)} \\
    1 & \cdots & 1
  \end{pmatrix}
  .
\end{equation}
We also assume that $\mathrm{sign} \det \mathfrak{B}_l = \mathrm{sign} \det \mathfrak{B}_1$ for all $l$.

Further, we set $\Gamma = \mathrm{SL}(d,\mathbb{Z})$ and
\begin{equation}
  \label{eq:Udef}
  U = \left\{
    \begin{pmatrix}
      1 & \cdots & 0 & * \\
      \vdots & \ddots & \vdots & \vdots \\
      0 & \cdots & 1 & * \\
      0 & \cdots & 0 & 1
    \end{pmatrix}
    \in \Gamma \right\}.
\end{equation}
Finally, for each $l$, $1\leq l \leq h^+(\alpha)$, we set
\begin{equation}
  \label{eq:Gammaldef}
  \Gamma_l = \Gamma \cap \left \{\mathfrak{B}_l 
    \begin{pmatrix}
      \xi^{(1)} & \cdots & 0 \\
      \vdots & \ddots & \vdots \\
      0 & \cdots & \xi^{(d)}
    \end{pmatrix}
    \mathfrak{B}_l^{-1} : \xi \in C_{r_1,r_2}^+ \right\}.
\end{equation}
We are now ready to state our theorem on the parameterization of the roots $\mu \pmod m$ of general polynomial congruences.

\begin{theorem}
  \label{theorem:parameterization}
  Let $m$ be a positive integer and $\mu\pmod m$ satisfy $F(\mu) \equiv 0 \pmod m$.
  We assume that this $m$ and $\mu\pmod m$ correspond via theorem \ref{theorem:correspondence} to an invertible ideal in $\mathbb{Z}[\alpha]$.
  Then there is a unique $l$, $1\leq l \leq h^+(\alpha)$, and unique double coset $U \gamma \Gamma_l \in U \backslash \Gamma / \Gamma_l$ such that
  \begin{equation}
    \label{eq:parameterization}
    \begin{pmatrix}
      1 & \cdots & 0 & -\mu^{d-1} \\
      \vdots & \ddots & \vdots & \vdots \\
      0 & \cdots & 1 & -\mu \\
      0 & \cdots & 0 & m
    \end{pmatrix}
    \mathfrak{B}_1
    = \gamma \mathfrak{B}_l
    \begin{pmatrix}
      \xi^{(1)} & \cdots & 0 \\
      \vdots & \ddots & \vdots \\
      0 & \cdots & \xi^{(d)}
    \end{pmatrix}
  \end{equation}
  for some $\xi \in C^+_{r_1, r_2}$.

  Conversely, given an $l$, $1\leq l \leq h^+(\alpha)$ and double coset $U\gamma \Gamma_l \in U \backslash \Gamma / \Gamma_l$ for which there exists $\xi \in C^+_{r_1, r_2}$ such that 
  \begin{equation}
    \label{eq:converseparameterization}
    \gamma \mathfrak{B}_l
    \begin{pmatrix}
      \xi^{(1)} & \cdots & 0 \\
      \vdots & \ddots & \vdots \\
      0 & \cdots & \xi^{(d)}
    \end{pmatrix}
    =
    \begin{pmatrix}
      1 & \cdots & 0 & * \\
      \vdots & \ddots & \vdots & * \\
      0 & \cdots & 1 & * \\
      0 & \cdots & 0 & * 
    \end{pmatrix}
    \mathfrak{B}_1 ,
  \end{equation}
  then necessarily
  \begin{equation}
    \label{eq:converseparameterization1}
    \gamma \mathfrak{B}_l
    \begin{pmatrix}
      \xi^{(1)} & \cdots & 0 \\
      \vdots & \ddots & \vdots \\
      0 & \cdots & \xi^{(d)}
    \end{pmatrix}
    =
    \begin{pmatrix}
      1 & \cdots & 0 & -\mu^{d-1} \\
      \vdots & \ddots & \vdots & \vdots \\
      0 & \cdots & 1 & -\mu \\
      0 & \cdots & 0 & m
    \end{pmatrix}
    \mathfrak{B}_1
  \end{equation}
  where $m$ is a positive integer and $\mu\pmod m$ satisfies $F(\mu) \equiv 0 \pmod m$.
  Moreover, such $m$ and $\mu \pmod m$, if they exist, are unique and correspond to an invertible ideal via theorem \ref{theorem:correspondence}.
\end{theorem}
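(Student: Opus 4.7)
The plan is to interpret the identity (\ref{eq:parameterization}) geometrically: both sides are $d\times d$ matrices whose rows give a $\mathbb{Z}$-basis, in the Minkowski embedding $C_{r_1,r_2}\subset\mathbb{C}^d$, of the same invertible ideal $I\subset\mathbb{Z}[\alpha]$. The left-hand side is the canonical basis prescribed by Theorem \ref{theorem:correspondence} for the data $(m,\mu)$, while the right-hand side expresses $I$ through the narrow class group picture by writing $I=\xi I_l$ and then scaling the fixed basis $\mathfrak{B}_l$ of $I_l$ coordinate-wise by $\xi$. The matrix $\gamma\in\Gamma$ is the change-of-basis between these two $\mathbb{Z}$-bases of $I$; the right action of $\Gamma_l$ encodes replacing $\xi$ by $\xi\eta$ for a totally positive unit $\eta$; and the left action of $U$ encodes the ambiguity between the specific HNF basis of Theorem \ref{theorem:correspondence} and the wider family of bases whose upper-left $(d-1)\times(d-1)$ block is the identity but whose last column is only determined modulo $m$.

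For the forward direction I would first invoke Theorem \ref{theorem:correspondence} to produce the ideal $I\subset\mathbb{Z}[\alpha]$ whose basis is given by the rows of $M\mathfrak{B}_1$, with $M$ denoting the matrix on the left of (\ref{eq:parameterization}). Since $I$ is invertible by hypothesis it lies in a unique narrow ideal class, so there is a unique $l$ and some $\xi\in I_l^{-1}\cap C^+_{r_1,r_2}$, unique up to totally positive units of $\mathbb{Z}[\alpha]$, with $I=\xi I_l$. Both $M\mathfrak{B}_1$ and $\mathfrak{B}_l\operatorname{diag}(\xi)$ then present the same lattice $I\subset C_{r_1,r_2}$, so they differ by a unique $\gamma\in\mathrm{GL}(d,\mathbb{Z})$; taking determinants, using $\det M=m>0$, the positivity of $N(\xi)$ for $\xi\in C^+_{r_1,r_2}$, and the sign normalisation $\mathrm{sign}\det\mathfrak{B}_l=\mathrm{sign}\det\mathfrak{B}_1$, forces $\det\gamma=+1$, so $\gamma\in\Gamma$. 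Replacing $\xi$ by $\xi\eta$ with $\eta$ a totally positive unit replaces $\gamma$ by $\gamma$ times $\mathfrak{B}_l\operatorname{diag}(\eta)\mathfrak{B}_l^{-1}\in\Gamma_l$ on the right, so the coset $\gamma\Gamma_l$, and a fortiori the double coset $U\gamma\Gamma_l$, is well-defined and depends only on $(m,\mu)$.

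For the converse, starting from $(l,U\gamma\Gamma_l)$ and a $\xi\in C^+_{r_1,r_2}$ witnessing (\ref{eq:converseparameterization}), the rows of $\gamma\mathfrak{B}_l\operatorname{diag}(\xi)=N\mathfrak{B}_1$ give a $\mathbb{Z}$-basis of the fractional ideal $\xi I_l$; integrality of $N$ implies $\xi I_l\subseteq\mathbb{Z}[\alpha]$, so $\xi I_l$ is an invertible integral ideal. The shape of $N$—identity upper-left $(d-1)\times(d-1)$ block and the bottom entry $m:=N_{dd}$, with $m>0$ by the same determinant computation—forces the Smith normal form of $N$ to be $\operatorname{diag}(1,\ldots,1,m)$, so $\mathbb{Z}[\alpha]/\xi I_l\cong\mathbb{Z}/m\mathbb{Z}$ is additively cyclic. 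Theorem \ref{theorem:correspondence} then produces a unique $(m,\mu)$ with $F(\mu)\equiv 0\pmod m$ whose HNF basis spans $\xi I_l$; this HNF basis differs from $N\mathfrak{B}_1$ only by reduction of the last column modulo $m$, which is precisely the action of some $u\in U$ on the left, so the representative $u\gamma$ of the double coset satisfies (\ref{eq:converseparameterization1}). Uniqueness of $(m,\mu)$ is inherited from the uniqueness of HNF in Theorem \ref{theorem:correspondence}, and invertibility of the associated ideal follows from $\xi I_l=\xi\cdot I_l$.

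The main technical point I expect to have to handle carefully is the interplay between the left $U$-ambiguity and the Hermite reduction: one must verify that the row operations needed to reduce any $N$ of the shape (\ref{eq:converseparameterization}) to the canonical form of Theorem \ref{theorem:correspondence} live inside $U$—that is, one only ever adds integer multiples of the last row to earlier rows, never combining the first $d-1$ rows with each other. This is forced by the identity upper-left block, but it is the place where the precise definition of $U$ (rather than a larger unipotent subgroup) is genuinely used. Beyond this, the remainder is bookkeeping, verifying that the forward and converse constructions are mutual inverses, together with the routine determinant and sign computations that pin down $\Gamma$ rather than $\mathrm{GL}(d,\mathbb{Z})$ and ensure $m>0$.
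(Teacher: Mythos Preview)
Your forward direction is correct and matches the paper almost exactly: the ideal $I$ sits in a unique narrow class $I_l$, writing $I=\xi I_l$ with $\xi\in I_l^{-1}\cap C^+_{r_1,r_2}$ gives a second basis, the change-of-basis matrix is forced into $\Gamma$ by the sign and determinant normalisations, and the $U$- and $\Gamma_l$-ambiguities correspond to choices of representatives for $\mu^j\pmod m$ and for $\xi$ modulo totally positive units.

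There is a genuine gap in your converse direction. You write ``integrality of $N$ implies $\xi I_l\subseteq\mathbb{Z}[\alpha]$,'' but you have not established that $N$ is integral. The hypothesis only says $N$ has the shape with identity upper-left $(d-1)\times(d-1)$ block and unspecified real entries $*$ in the last column; nothing forces those entries (or the bottom-right entry) to be integers a priori. Equivalently, calling $\xi I_l$ a ``fractional ideal'' already presupposes $\xi\in\mathbb{Q}(\alpha)$, which is not part of the data: the theorem only assumes $\xi\in C^+_{r_1,r_2}$. So the step ``$\xi I_l$ is an invertible integral ideal'' is exactly the content that must be proved, and everything downstream (Smith normal form, cyclicity, appeal to Theorem~\ref{theorem:correspondence}) rests on it.

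This is precisely where the paper's effort goes. The paper shows $\xi\in I_l^{-1}$ by expanding $\xi=\sum c_i\overline{\beta}_{li}$ in the fixed basis of $I_l^{-1}$, observing that the shape condition forces $B_l(c_1,\dots,c_d)^T$ to equal the first column of $\gamma^{-1}$ (hence an integer vector), and then proving the nontrivial Lemma~\ref{lemma:B1} that $B_l\in\mathrm{GL}(d,\mathbb{Z})$, so the $c_i$ themselves are integers. An alternative, more elementary route you could take is to exploit directly that the lattice $\xi I_l$ is $\mathbb{Z}[\alpha]$-stable: applying $\alpha$ to the last basis vector $m'\cdot 1$ and expressing $m'\alpha$ in the basis $\{\alpha^{d-j}+n_j,\;m'\}$ forces $m'\in\mathbb{Z}$ and then, inductively, each $n_j\in\mathbb{Z}$. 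Either way, this step is the crux of the converse and cannot be skipped. By contrast, the ``main technical point'' you flag (that the HNF reduction lives in $U$) is immediate from the identity upper-left block and is not where the difficulty lies.
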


There are many minor variations of theorem \ref{theorem:parameterization}.
For example, if one wanted to avoid the use of complex numbers, one could consider the real and imaginary parts of a complex embedding instead of the conjugate pair.
This would have the effect of replacing the diagonal matrix on the right of (\ref{eq:parameterization}) with $r_2$ $2\times 2$ blocks of scaling/rotating matrices along the bottom right of the diagonal.
A more significant variation on theorem \ref{theorem:parameterization} would be to write the modulus $m$ and root $\mu\pmod m$ explicitly in terms of the entries of $\gamma$, recovering both Gauss's classic parameterization of the roots of quadratic congruences and the parameterization in \cite{Hooley1978} for the specific example $F(X) = X^3 - 2$.
This is done in theorem \ref{theorem:parameterization1} below, and to state it we establish a little more notation.

For each $l$, $1\leq l \leq h^+(\alpha)$, we fix a basis $\{\overline{\beta}_{l1}, \dots, \overline{\beta}_{ld}\}$ of the ideal $I_l^{-1}$ and define the integers $b_{ijkl}$ by
\begin{equation}
  \label{eq:bijkldef}
  \overline{\beta}_{li}\beta_{lj} = \sum_{1\leq k \leq d} b_{ijkl} \alpha^{d-k}.
\end{equation}
We let $B_{li}$ be the matrix with entries $b_{ijkl}$, $1\leq j \leq d$ indexing the rows and $1\leq k\leq d$ indexing the columns.
We also fix a fundamental domain $\mathcal{D} \subset \mathbb{Q}(\alpha)\cap C_{r_1,r_2}^+$ for the action of the totally positive units in $\mathbb{Z}[\alpha]$. 

\begin{theorem}
  \label{theorem:parameterization1}
  For a given $l$, $1\leq l\leq h^+(\alpha)$, and integers $c_1, \dots, c_d$, set
  \begin{equation}
    \label{eq:Cdef}
    C = \sum_{1\leq i\leq d} c_i B_{li}, \quad \xi = \sum_{1\leq i\leq d} c_i \overline{\beta}_{li}.
  \end{equation}
  We assume that $\xi \in \mathcal{D}$ and that
  \begin{equation}
    \label{eq:gcdcondition}
    \gcd(\det C_{1d}, \det C_{2d}, \dots, \det C_{dd}) = 1,
  \end{equation}
  where $C_{ij}$ is the $(d-1)\times (d-1)$ minor of $C$ obtained by removing the $i$th row and $j$th column, so there exist integers $u_j$, $1\leq j \leq d$, such that
  \begin{equation}
    \label{eq:ujdef1}
    \sum_{1\leq j \leq d} (-1)^{j+d} u_j \det C_{jd} = 1.
  \end{equation}
  Then
  \begin{equation}
    \label{eq:gammadef}
    \gamma^{-1} =
    \begin{pmatrix}
      c_{11} & \cdots & c_{1(d-1)} & u_1 \\
      \vdots & \ddots & \vdots & \vdots \\
      c_{d1} & \cdots & c_{d(d-1)} & u_d
    \end{pmatrix}
    \in \Gamma,
  \end{equation}
   where $c_{ij}$ are the entries of $C$, satisfies
  \begin{equation}
    \label{eq:parameterization1}
    \gamma \mathfrak{B}_l
    \begin{pmatrix}
      \xi^{(1)} & \cdots & 0 \\
      \vdots & \ddots & \vdots \\
      0 & \cdots & \xi^{(d)}
    \end{pmatrix}
    =
    \begin{pmatrix}
      1 & \cdots & 0 & -\mu^{d-1} \\
      \vdots & \ddots & \vdots & \vdots \\
      0 & \cdots & 1 & -\mu \\
      0 & \cdots & 0 & m
    \end{pmatrix}
    \mathfrak{B}_1
  \end{equation}
  and
  \begin{equation}
    \label{eq:parameterization2}
    \begin{pmatrix}
      1 & \cdots & 0 & -\mu^{d-1} \\
      \vdots & \ddots & \vdots & \vdots \\
      0 & \cdots & 1 & -\mu \\
      0 & \cdots & 0 & m
    \end{pmatrix}
    = \gamma C,
  \end{equation}
  where $m$ is a positive integer and $\mu \pmod m$ satisfies $F(\mu) \equiv 0 \pmod m$.

  Moreover, given such $m$ and $\mu\pmod m$ satisfying $F(\mu)\equiv 0\pmod m$ and corresponding via theorem \ref{theorem:correspondence} to an invertible ideal, there exist unique integers $c_1, \dots, c_d$ corresponding to $m$ and $\mu \pmod m$ in the above way, with different choices of the integers $u_j$ corresponding to different representatives of the residue classes $\mu^j \pmod m$. 
\end{theorem}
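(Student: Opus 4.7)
The plan is to organize the proof around the matrix identity
\begin{equation*}
\mathfrak{B}_l \, \mathrm{diag}(\xi^{(1)}, \ldots, \xi^{(d)}) = C \, \mathfrak{B}_1,
\end{equation*}
which will reduce (\ref{eq:parameterization1}) to a statement about $\gamma C$ that is precisely (\ref{eq:parameterization2}). This identity is verified by expanding
\begin{equation*}
\xi \beta_{lj} = \sum_{1\leq i \leq d} c_i \overline{\beta}_{li} \beta_{lj} = \sum_{1\leq k \leq d} c_{jk} \alpha^{d-k}
\end{equation*}
via (\ref{eq:bijkldef}) and the definition of $C$, then applying the $d$ embeddings row by row. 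That $\gamma^{-1} \in \mathrm{SL}(d, \mathbb{Z})$ follows from cofactor expansion of $\det \gamma^{-1}$ along the last column, which equals the left-hand side of (\ref{eq:ujdef1}), hence $1$.

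For the forward direction, the heart of the matter is the shape of $\gamma C$. By construction, the first $d-1$ columns of $\gamma^{-1}$ coincide with the first $d-1$ columns of $C$, so $\gamma \gamma^{-1} = I$ forces the first $d-1$ columns of $\gamma C$ to be $e_1, \ldots, e_{d-1}$, giving
\begin{equation*}
\gamma C = \begin{pmatrix} 1 & \cdots & 0 & v_1 \\ \vdots & \ddots & \vdots & \vdots \\ 0 & \cdots & 1 & v_{d-1} \\ 0 & \cdots & 0 & m \end{pmatrix}
\end{equation*}
with $m = \det(\gamma C) = \det C$. Since $C = \mathfrak{B}_l \, \mathrm{diag}(\xi) \, \mathfrak{B}_1^{-1}$, $N(\xi) = \prod_k \xi^{(k)} > 0$ (from $\xi \in C^+_{r_1, r_2}$), and $\mathrm{sign}\det\mathfrak{B}_l = \mathrm{sign}\det\mathfrak{B}_1$ by the standing assumption, we obtain $m > 0$. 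The rows of $\gamma C$ read in the power basis give the $\mathbb{Z}$-basis $\{\alpha^{d-1} + v_1, \ldots, \alpha + v_{d-1}, m\}$ of the ideal $\xi I_l$; modulo this ideal $\alpha^{d-i} \equiv -v_i$ and $m \equiv 0$, so $\mathbb{Z}[\alpha]/(\xi I_l)$ is additively cyclic of order $m$. Theorem \ref{theorem:correspondence} then produces the unique $\mu \pmod m$ with $F(\mu) \equiv 0 \pmod m$ for which $v_i \equiv -\mu^{d-i} \pmod m$, and the integer $v_i$ itself serves as the representative of $-\mu^{d-i}$ appearing in (\ref{eq:parameterization2}).

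For the converse, given $m$ and $\mu \pmod m$ arising from an invertible ideal $I$ via Theorem \ref{theorem:correspondence}, I would write $I = \xi I_l$ uniquely with $l$ the narrow class index and $\xi \in I_l^{-1} \cap \mathcal{D}$, then expand $\xi = \sum_i c_i \overline{\beta}_{li}$ with unique integer coefficients. The two bases of $I$ — namely $\{\xi\beta_{l1}, \ldots, \xi\beta_{ld}\}$ with power-basis matrix $C$, and the standard basis from (\ref{eq:correspondence}) — are related by a unique $\gamma \in \mathrm{SL}(d, \mathbb{Z})$ (the determinant is $+1$ because both $m$ and $\det C$ are positive), and $\gamma^{-1}$ automatically agrees with $C$ in its first $d-1$ columns. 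The existence of such a $\gamma^{-1}$ is equivalent to extending these $d-1$ columns of $C$ to a unimodular $d \times d$ matrix, equivalently to the gcd of the $d$ maximal minors — exactly the $\det C_{jd}$ — being $1$; this verifies (\ref{eq:gcdcondition}). The $(d-1)$-parameter family of $u_j$'s solving (\ref{eq:ujdef1}) corresponds to changes in the last column of $\gamma^{-1}$; since all such $\gamma$ produce bases of the same ideal $I$, the matrix $\gamma C$ changes only by adding integer multiples of its last row $(0, \ldots, 0, m)$ to its upper rows, which is precisely the action of choosing different integer lifts of $-\mu^{d-i} \pmod m$.

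I expect the main obstacle to be the careful verification in the converse direction that existence of the $\mathrm{SL}(d, \mathbb{Z})$ basis-equivalence translates cleanly into the arithmetic gcd condition (\ref{eq:gcdcondition}) via the theory of Smith normal form, together with tracking the explicit correspondence between choices of the $u_j$ and integer lifts of $\mu^j \pmod m$. The rest of the proof amounts to the matrix computations above and invocations of Proposition \ref{proposition:invariantfactors} and Theorem \ref{theorem:correspondence}.
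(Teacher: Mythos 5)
Your proposal is correct and follows essentially the same route as the paper: the central identity $\mathfrak{B}_l\,\mathrm{diag}(\xi^{(1)},\dots,\xi^{(d)})=C\,\mathfrak{B}_1$, reading off the first $d-1$ columns of $\gamma^{-1}$ from $C$, the coprimality of the minors $\det C_{jd}$ as the condition for unimodular completion, and the identification of the $u_j$-ambiguity with the choice of representatives of $\mu^j\pmod m$ all match the paper's argument. The only difference is cosmetic: you establish the forward direction directly (checking that $\xi I_l$ is an integral ideal with cyclic quotient and invoking Theorem \ref{theorem:correspondence}), whereas the paper routes it through Theorem \ref{theorem:parameterization}; the paper's own remark notes that this direct route is available and avoids Lemma \ref{lemma:B1}.
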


We remark that if one wanted to prove theorem \ref{theorem:parameterization1} directly, skipping theorem \ref{theorem:parameterization}, there are many difficulties in the proof of theorem \ref{theorem:parameterization} that one can avoid, e.g. lemma \ref{lemma:B1}.
However, theorem \ref{theorem:parameterization} provides a geometric perspective that is useful in certain contexts.

Restricting our attention to the cubic setting $d=3$, in view of the extension theorem \ref{theorem:cubiccorrespondence} of theorem \ref{theorem:correspondence}, we can ask if there are similar extensions of theorems \ref{theorem:parameterization} and \ref{theorem:parameterization1}.
This is indeed the case, as shown in theorems \ref{theorem:cubicparameterization} and \ref{theorem:cubicparameterization1} below.
In theorem \ref{theorem:cubicparameterization} we let $\Gamma_\infty \subset \Gamma = \mathrm{SL}(3,\mathbb{Z})$ be the subgroup of unipotent (positive ones on the diagonal) upper-triangular matrices.

\begin{theorem}
  \label{theorem:cubicparameterization}
  Let $m_1$, $m_2$ be positive integers and $\mu_1\pmod {m_1}$, $\mu_2 \pmod {m_2}$ satisfy
  \begin{flalign}
    \label{eq:mu1mu2conditions}
    & F(\mu_1) \equiv 0 \pmod {m_1} \\ \nonumber
    & F(\mu_2) \equiv 0 \pmod {m_2} \\ \nonumber
    & \gcd(m_1, m_2, \mu_1 - \mu_2) = 1.
  \end{flalign}
  In addition, we assume that $\gcd(m_1m_2, D) = 1$. Then there is a unique $l$ and double coset $\Gamma_\infty \gamma \Gamma_l \in \Gamma_\infty \backslash \Gamma  / \Gamma_l$ such that
  \begin{equation}
    \label{eq:cubicparameterization}
    \begin{pmatrix}
      1 & \mu_1 + a_1 & * \\
      0 & m_1 & - \mu_2m_1 \\
      0 & 0 & m_1m_2
    \end{pmatrix}
    \mathfrak{B}_1 = \gamma \mathfrak{B}_l
    \begin{pmatrix}
      \xi^{(1)} & 0 & 0 \\
      0 & \xi^{(2)} & 0 \\
      0 & 0 & \xi^{(3)}
    \end{pmatrix}
  \end{equation}
  for some $\xi \in C_{r_1,r_2}^+$.

  Conversely, if $\Gamma_\infty \gamma \Gamma_l$ is such that
  \begin{equation}
    \label{eq:conversecubicparameterization}
    \gamma \mathfrak{B}_l
    \begin{pmatrix}
      \xi^{(1)} & 0 & 0 \\
      0 & \xi^{(2)} & 0 \\
      0 & 0 & \xi^{(3)}
    \end{pmatrix}
    =
    \begin{pmatrix}
      1 & * & * \\
      0 & * & * \\
      0 & 0 & *
    \end{pmatrix}
    \mathfrak{B}_1,
  \end{equation}
  then necessarily
  \begin{equation}
    \label{eq:conversecubicparameterization1}
        \gamma \mathfrak{B}_l
    \begin{pmatrix}
      \xi^{(1)} & 0 & 0 \\
      0 & \xi^{(2)} & 0 \\
      0 & 0 & \xi^{(3)}
    \end{pmatrix}
    =
    \begin{pmatrix}
      1 & \mu_1 + a_1 & * \\
      0 & m_1 & - \mu_2m_1 \\
      0 & 0 & m_1m_2
    \end{pmatrix}
    \mathfrak{B}_1,
  \end{equation}
  where $m_1$, $m_2$ are positive integers and $\mu_1 \pmod {m_1}$, $\mu_2 \pmod {m_2}$ satisfy (\ref{eq:mu1mu2conditions}).
  Moreover, these $m_1$, $m_2$ and $\mu_1 \pmod {m_1}$, $\mu_2 \pmod {m_2}$, if they exist, are unique. 
\end{theorem}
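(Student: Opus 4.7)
The plan is to reduce Theorem \ref{theorem:cubicparameterization} to Theorem \ref{theorem:cubiccorrespondence} in the same way Theorem \ref{theorem:parameterization} is reduced to Theorem \ref{theorem:correspondence}, but with the weaker uniqueness-up-to-$\Gamma_\infty$ clause from Theorem \ref{theorem:cubiccorrespondence} replacing the sharper uniqueness-up-to-$U$ from Theorem \ref{theorem:correspondence}.

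For the forward direction, given $(m_1, m_2, \mu_1, \mu_2)$ satisfying (\ref{eq:mu1mu2conditions}), I apply the converse of Theorem \ref{theorem:cubiccorrespondence} to produce an ideal $I \subset \mathbb{Z}[\alpha]$ whose basis $\{\beta_1, \beta_2, \beta_3\}$ has the form (\ref{eq:idealcorrespondence}). The norm of $I$ equals the determinant of the transition matrix in (\ref{eq:idealcorrespondence}), namely $m_1^2 m_2$, which is coprime to $D$ by hypothesis; since the conductor of $\mathbb{Z}[\alpha]$ divides $D$, $I$ is invertible. Hence $I = \xi I_l$ for a unique $l$ with $1 \leq l \leq h^+(\alpha)$ and some $\xi \in I_l^{-1} \cap C_{r_1, r_2}^+$, determined up to totally positive units. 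Both $\{\beta_1, \beta_2, \beta_3\}$ and $\{\xi \beta_{l1}, \xi \beta_{l2}, \xi \beta_{l3}\}$ are $\mathbb{Z}$-bases of $I$, so they are related by a matrix $\gamma \in \mathrm{GL}(3, \mathbb{Z})$; a determinant comparison using $\xi^{(1)} \xi^{(2)} \xi^{(3)} > 0$, $m_1^2 m_2 > 0$, and the sign conventions on $\mathfrak{B}_l$ and $\mathfrak{B}_1$ shows $\det \gamma = 1$. This yields (\ref{eq:cubicparameterization}).

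For well-definedness of the double coset $\Gamma_\infty \gamma \Gamma_l$, replacing $\xi$ by $\epsilon \xi$ for a totally positive unit $\epsilon$ right-multiplies $\gamma$ by $\mathfrak{B}_l \operatorname{diag}(\epsilon^{(1)}, \epsilon^{(2)}, \epsilon^{(3)}) \mathfrak{B}_l^{-1} \in \Gamma_l$, while the uniqueness-up-to-$\Gamma_\infty$ clause of Theorem \ref{theorem:cubiccorrespondence}---applicable because $\gcd(m_1 m_2, D) = 1$ implies $\gcd(m_1, m_2, D) = 1$---shows that any other basis of $I$ of the shape (\ref{eq:idealcorrespondence}) left-multiplies $\gamma$ by an element of $\Gamma_\infty$. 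So the double coset is well-defined and unique. Conversely, if $\gamma \mathfrak{B}_l \operatorname{diag}(\xi^{(1)},\xi^{(2)},\xi^{(3)})$ equals an integer upper-triangular matrix with $(1,1)$-entry $1$ times $\mathfrak{B}_1$, then its rows form a $\mathbb{Z}$-basis of the integral lattice $I := \xi I_l \subset \mathbb{Z}[\alpha]$; the shape of the first row (leading coefficient $1$ in $\alpha^2$) prevents $I$ from being divisible by any rational integer, and $I$ is invertible as a scalar multiple of $I_l$. So Theorem \ref{theorem:cubiccorrespondence} applies and produces $(m_1, m_2, \mu_1, \mu_2)$ satisfying (\ref{eq:mu1mu2conditions}) together with a basis of $I$ of the form (\ref{eq:idealcorrespondence}); matching this basis against the one implicit in the hypothesis shows that the change-of-basis matrix between them is upper-triangular unipotent, giving (\ref{eq:conversecubicparameterization1}).

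The main obstacle is the converse direction: verifying that $(m_1, m_2, \mu_1, \mu_2)$ extracted from the double coset automatically satisfy $\gcd(m_1, m_2, \mu_1 - \mu_2) = 1$, without an a priori assumption $\gcd(m_1 m_2, D) = 1$. This reflects the fact that $I = \xi I_l$ is invertible: a prime $p \mid \gcd(m_1, m_2, \mu_1 - \mu_2)$ would force the $p$-localization of $I$ to contain $p$ as a common factor of all three basis elements in (\ref{eq:idealcorrespondence}), contradicting invertibility at $p$. The remaining work is careful bookkeeping of the $\Gamma_\infty$ and $\Gamma_l$ ambiguities and of the sign and orientation conventions between $\mathfrak{B}_l$ and $\mathfrak{B}_1$.
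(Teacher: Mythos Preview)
Your forward direction and the double-coset bookkeeping match the paper's argument essentially verbatim. The issues are in the converse.

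First, you write that $\gamma \mathfrak{B}_l\operatorname{diag}(\xi)$ equals ``an \emph{integer} upper-triangular matrix with $(1,1)$-entry $1$'' times $\mathfrak{B}_1$, and then treat $I:=\xi I_l$ as an integral ideal. But the hypothesis (\ref{eq:conversecubicparameterization}) only asserts the \emph{shape} (upper-triangular, top-left entry $1$); a priori $\xi\in C_{r_1,r_2}^+$ is arbitrary, so $\xi I_l$ need not lie in $\mathbb{Z}[\alpha]$ and the starred entries need not be integers. The paper closes this gap by invoking Lemma~\ref{lemma:B1}: writing $\xi=\sum_i c_i\overline{\beta}_{li}$ with real $c_i$, the first column of $\mathfrak{B}_l\operatorname{diag}(\xi)\mathfrak{B}_1^{-1}$ equals $B_l(c_1,c_2,c_3)^T$ and also equals the first column of $\gamma^{-1}$, which is integral and primitive; since $B_l\in\mathrm{GL}(3,\mathbb{Z})$, the $c_i$ are integers with $\gcd(c_1,c_2,c_3)=1$, so $\xi\in I_l^{-1}$ is primitive and $\xi I_l\subset\mathbb{Z}[\alpha]$ is not divisible by a rational integer. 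Only then can Theorem~\ref{theorem:cubiccorrespondence} be applied.

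Second, your proposed argument for $\gcd(m_1,m_2,\mu_1-\mu_2)=1$ is incorrect: you claim a prime $p$ dividing this gcd would be a common factor of all three basis elements in (\ref{eq:idealcorrespondence}), but $\beta_1=\alpha^2+(\mu_1+a_1)\alpha+\lambda$ has $\alpha^2$-coefficient $1$ and is never divisible by any rational prime. What the computations around (\ref{eq:pcongruence}) actually give is that any such $p$ divides $D$; invertibility of $\xi I_l$ alone does not directly exclude this. The paper's converse simply appeals to Theorem~\ref{theorem:cubiccorrespondence} for the ideal $\xi I_l$ and does not argue the third line of (\ref{eq:mu1mu2conditions}) separately, so you should not claim more than that here.
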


We can also give a variation on theorem \ref{theorem:cubicparameterization} along the lines of theorem \ref{theorem:parameterization1}.
To state this variation, theorem \ref{theorem:cubicparameterization1}, we make use of the Pl\"ucker coordinates of cosets $\Gamma_\infty \gamma \in \Gamma_\infty \backslash \Gamma$, which we describe as follows.
Given a representative $\gamma$ of $\Gamma_\infty \gamma$, if
\begin{equation}
  \label{eq:Pluckerdef}
  \gamma =
  \begin{pmatrix}
    * & * & * \\
    * & * & * \\
    a' & b' & c' 
  \end{pmatrix}
  , \quad \gamma^{-1} = 
  \begin{pmatrix}
    a & * & * \\
    b & * & * \\
    c & * & * 
  \end{pmatrix}
  ,
\end{equation}
then the Pl\"ucker coordinates of $\Gamma_\infty \gamma$ are $a,b,c,a',b',c'$.
These integers are well-defined and satisfy
\begin{flalign}
  \label{eq:Pluckerconditions}
  & \gcd(a,b,c) = 1 \\ \nonumber
  & \gcd(a',b',c') = 1 \\ \nonumber
  & aa' + bb' + cc' = 0.
\end{flalign}
Conversely, given integers satisfying (\ref{eq:Pluckerconditions}), there exists a unique coset $\Gamma_\infty \gamma$ such that any representative $\gamma$ satisfies (\ref{eq:Pluckerdef}).
For a reference see \cite{BumpFriedbergGoldfeld1988} for example.

We also introduce a little more  notation in addition to that used in stating theorem \ref{theorem:parameterization1}.
Since $I_l$ is an ideal, we can define integers $b_{ijkl}'$ by
\begin{equation}
  \label{eq:bijklprimedef}
  \beta_{li}\alpha^{3-j}  = \sum_{1\leq k \leq 3} b_{ijkl}' \beta_{lk},
\end{equation}
and we set $B_{li}'$ to be the matrix with entries $b_{ijkl}'$, $j$ indexing the rows and $k$ indexing the columns. 

\begin{theorem}
  \label{theorem:cubicparameterization1}
  For given $l$, $1\leq l \leq h^+(\alpha)$, and integers $c_1, c_2, c_3$, $c_1', c_2', c_3'$, set
  \begin{equation}
    \label{eq:Cxidef}
    C = \sum_{1\leq l\leq 3} c_i B_{li}, \quad \xi = \sum_{1\leq i \leq 3} c_i \overline{\beta}_{li}, \quad C' = \sum_{1\leq i \leq 3} c_i' B_i', \quad \xi' = \sum_{1\leq i\leq 3} c_i' \beta_{li}.
  \end{equation}
  We assume that
  \begin{flalign}
    \label{eq:ccprimeconditions}
    & \gcd(c_{11}, c_{21}, c_{31}) = 1 \\ \nonumber
    & \gcd(c_{31}', c_{32}', c_{33}') = 1 \\ \nonumber
    & \xi \xi' \in \mathbb{Z},
  \end{flalign}
  where $c_{ij}$, $c_{ij}'$ are respectively the entries of $C$, $C'$.
  We also assume that $\xi' \in \mathcal{D}$, a fixed fundamental domain for the action of the totally positive units on $C_{r_1, r_2}^+ \cap \mathbb{Q}(\alpha)$.
  Then the coset $\Gamma_\infty \gamma \in \Gamma_\infty \backslash \Gamma$ having Pl\"ucker coordinates $c_{11}, c_{21}, c_{31}, c_{31}', c_{32}', c_{33}'$ satisfies
  \begin{equation}
    \label{eq:cubicparameterization1}
    \gamma \mathfrak{B}_l
    \begin{pmatrix}
      \xi^{(1)} & 0 & 0 \\
      0 & \xi^{(2)} & 0 \\
      0 & 0 & \xi^{(3)}
    \end{pmatrix}
    =
    \begin{pmatrix}
      1 & \mu_1 + a_1 & * \\
      0 & m_1 & -\mu_2m_1 \\
      0 & 0 & m_1m_2
    \end{pmatrix}
    \mathfrak{B}_1
  \end{equation}
  and
  \begin{equation}
    \label{eq:cubicparameterization2}
    \begin{pmatrix}
      1 & \mu_1 + a_1 & * \\
      0 & m_1 & -\mu_2m_1 \\
      0 & 0 & m_1m_2
    \end{pmatrix}
    = \gamma C,
  \end{equation}
  where $m_1$, $m_2$ are positive integers and $\mu_1\pmod{m_1}$, $\mu_2 \pmod {m_2}$ satisfy
  \begin{equation}
    \label{eq:mu1mu2conditions2}
    F(\mu_1) \equiv 0 \pmod {m_1}, \quad F(\mu_2) \equiv 0 \pmod {m_2}.
  \end{equation}

  Conversely, given positive integers $m_1$, $m_2$ such that $\gcd(m_1m_2, D) = 1$ and residue classes $\mu_1 \pmod {m_1}$, $\mu_2\pmod {m_2}$ satisfying
  \begin{flalign}
    \label{eq:mu1mu1conditions3}
    & F(\mu_1) \equiv 0 \pmod {m_1} \\ \nonumber
    & F(\mu_2) \equiv 0 \pmod {m_2} \\ \nonumber
    & \gcd(m_1, m_2, \mu_1 - \mu_2) = 1,
  \end{flalign}
  there exists unique $l$ and integers $c_1,c_2,c_3$, $c_1',c_2',c_3'$ corresponding the $m_1, m_2$ and $\mu_1\pmod{m_1}$, $\mu_2\pmod {m_2}$ in the above way.
\end{theorem}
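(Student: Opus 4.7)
The plan is to reduce this theorem to Theorems \ref{theorem:cubiccorrespondence} and \ref{theorem:cubicparameterization} via two matrix identities that translate the data $(c_i, c_i')$ and $(\xi, \xi')$ into the bases of $I_l$ and $I_l^{-1}$.  Unpacking the definitions (\ref{eq:bijkldef}) and (\ref{eq:bijklprimedef}) applied to each embedding of $\alpha$, one first verifies
\begin{equation*}
  C \mathfrak{B}_1 = \mathfrak{B}_l \begin{pmatrix} \xi^{(1)} & 0 & 0 \\ 0 & \xi^{(2)} & 0 \\ 0 & 0 & \xi^{(3)} \end{pmatrix}, \qquad C' \mathfrak{B}_l = \mathfrak{B}_1 \begin{pmatrix} {\xi'}^{(1)} & 0 & 0 \\ 0 & {\xi'}^{(2)} & 0 \\ 0 & 0 & {\xi'}^{(3)} \end{pmatrix}.
\end{equation*}
Multiplying these, $C'C$ is conjugate via $\mathfrak{B}_1$ to the diagonal matrix with entries $\xi^{(k)} {\xi'}^{(k)}$, and the hypothesis $\xi\xi' \in \mathbb{Z}$ collapses this to the scalar matrix $(\xi\xi')I$. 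The identity $C'C = (\xi\xi')I$ is the cornerstone of the argument.

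Next I would construct $\gamma$ via its Pl\"ucker coordinates. The gcd hypotheses $\gcd(c_{11},c_{21},c_{31}) = 1$ and $\gcd(c'_{31},c'_{32},c'_{33}) = 1$ are assumed, while the orthogonality $c_{11}c'_{31} + c_{21}c'_{32} + c_{31}c'_{33} = 0$ is precisely the $(3,1)$-entry of $C'C = (\xi\xi')I$; hence the Pl\"ucker parameterization produces a unique coset $\Gamma_\infty \gamma$ with these coordinates.  For any representative, the first column of $\gamma C$ equals $\gamma$ applied to the first column of $\gamma^{-1}$, which is $(1,0,0)^\top$; meanwhile the last row of $\gamma C$ equals the last row of $C'$ times $C$, namely $(0, 0, \xi\xi')$ by the cornerstone identity.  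Thus $\gamma C$ is upper triangular with diagonal $(1, m, \xi\xi')$ for some integer $m$. Using $\xi' \in \mathcal{D} \subset C_{r_1,r_2}^+$ one sees that $\xi^{(j)} = \xi\xi'/{\xi'}^{(j)} > 0$ at real embeddings, so $\det C > 0$ and, together with $\xi\xi' > 0$, this forces $m > 0$.

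The identity $\gamma C \mathfrak{B}_1 = \gamma \mathfrak{B}_l \, \mathrm{diag}(\xi^{(k)})$ then exhibits the rows of $\gamma \mathfrak{B}_l \, \mathrm{diag}(\xi^{(k)})$ as a $\mathbb{Z}$-basis of the ideal $\xi I_l \subset \mathbb{Z}[\alpha]$ in the Hermite shape of Proposition \ref{proposition:invariantfactors}.  That proposition forces $m \mid \xi\xi'$, so setting $m_1 = m$ and $m_2 = \xi\xi'/m_1$ gives positive integers, and $\gamma C$ is now in the form (\ref{eq:idealcorrespondence}).  Reading off $\mu_1 + a_1 = (\gamma C)_{12}$, $-\mu_2 m_1 = (\gamma C)_{23}$ and invoking Theorem \ref{theorem:cubiccorrespondence} yields (\ref{eq:mu1mu2conditions2}) together with the claimed identities (\ref{eq:cubicparameterization1}) and (\ref{eq:cubicparameterization2}).

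For the converse, I would start with $(m_1, m_2, \mu_1, \mu_2)$ satisfying (\ref{eq:mu1mu1conditions3}) and apply Theorem \ref{theorem:cubiccorrespondence} to produce an ideal $I$, invertible by $\gcd(m_1 m_2, D) = 1$.  Factoring uniquely $I = \xi I_l$ determines $l$ and $\xi \in I_l^{-1}$ up to totally positive units.  The condition $\xi\xi' \in \mathbb{Z}$ forces $\xi' = m_1 m_2 / \xi$ up to a rational integer factor; imposing $\xi' \in \mathcal{D}$ pins down $\xi$ and $\xi'$ uniquely, after which the integers $c_i, c_i'$ are read off by expanding in the fixed bases. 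The main obstacle I anticipate is verifying the Pl\"ucker gcd conditions (\ref{eq:ccprimeconditions}) for the recovered parameters: primitivity of $(c_{11}, c_{21}, c_{31})$ is equivalent to $I$ having no rational integer divisor, which is the top-left $1$ in (\ref{eq:idealcorrespondence}); primitivity of $(c'_{31}, c'_{32}, c'_{33}) = (c'_1, c'_2, c'_3)$ follows because any nontrivial rational divisor of $\xi'$ in $I_l$ would, via $\xi\xi' = m_1m_2$, produce a positive integer strictly smaller than $m_1 m_2$ in $I \cap \mathbb{Z}$, contradicting the Hermite form of Theorem \ref{theorem:cubiccorrespondence}.
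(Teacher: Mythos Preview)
Your argument is correct and follows essentially the same route as the paper: the identities $C\mathfrak{B}_1=\mathfrak{B}_l\,\mathrm{diag}(\xi)$ and $C'\mathfrak{B}_l=\mathfrak{B}_1\,\mathrm{diag}(\xi')$, together with the observation that $C'C$ is (block) scalar when $\xi\xi'\in\mathbb{Z}$, force $\gamma C$ into the upper-triangular shape required by Theorem~\ref{theorem:cubiccorrespondence}. Your identity $C'C=(\xi\xi')I_3$ is a slight sharpening of the paper's equation~(\ref{eq:CCprimeeq}), which only records the vanishing of the $(3,1)$ and $(3,2)$ entries; both suffice.

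The one place where your treatment genuinely differs is the converse step verifying $\gcd(c_{31}',c_{32}',c_{33}')=1$. The paper routes this through Lemma~\ref{lemma:Blprime}, invoking the different $\frac{1}{F'(\alpha)}\mathbb{Z}[\alpha]$ and the explicit inverse of $\transpose{\mathfrak{B}_1}$. You instead note that, by the very definition~(\ref{eq:bijklprimedef}) with $j=3$ (so $\alpha^{3-j}=1$), the third row of each $B_{li}'$ is the $i$th standard basis vector, whence $(c_{31}',c_{32}',c_{33}')=(c_1',c_2',c_3')$ identically; primitivity then reduces to showing $\xi'=m_1m_2/\xi$ is primitive in $I_l$, which you deduce from $I\cap\mathbb{Z}=m_1m_2\mathbb{Z}$. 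This is shorter and more transparent than the paper's argument, and in fact shows that the matrix $B_l'$ of Lemma~\ref{lemma:Blprime} is always the identity, so that lemma is stronger than what is needed. One small point to tighten: your positivity chain uses $\xi\xi'>0$ before it is established; it is cleanest to observe first that $\xi\xi'\neq 0$ (from the gcd hypotheses) and then fix the sign of $\xi'$ by requiring $\xi\xi'>0$, exactly as the paper does in the sentence following~(\ref{eq:CCprimeeq}).
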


We note that in order to define the coset $\Gamma_\infty \gamma$ by the Pl\"ucker coordinates $c_{11},c_{21},c_{31},c_{31}', c_{32}', c_{33}'$ as above, these coordinates need to satisfy (\ref{eq:Pluckerconditions}).
These conditions, (\ref{eq:Pluckerconditions}), are in fact implied by the requirements (\ref{eq:ccprimeconditions}), as demonstrated in section \ref{sec:cubicparameterization1} below.
We remark however that (\ref{eq:ccprimeconditions}) contains an extra quadratic constraint on the coordinates in addition to the third line of (\ref{eq:Pluckerconditions}).
This shows that even with the extended correspondence, theorem \ref{theorem:cubiccorrespondence} over theorem \ref{theorem:correspondence}, a relatively small subset of cosets $\Gamma_\infty \gamma$ actually correspond to roots of the congruence.
This is a significant difference between the cubic and quadratic setting and is one reason why despite hope, see \cite{Terras1988} and \cite{Buttcane2012}, strong statistical results on the roots $\mu \pmod m$ have not been obtained using the spectral theory of $\mathrm{SL}(3)$ automorphic forms.

We illustrate \ref{theorem:cubicparameterization1} and the above remarks for the polynomial $F(X) = X^3 - 2$, obtaining as a consequence Hooley's parameterization \cite{Hooley1978}.
The ring $\mathbb{Z}[2^{\frac{1}{3}}]$ has class number $1$ and we set
\begin{equation}
  \label{eq:examplebeta}
  \begin{pmatrix}
    \beta_{11} \\
    \beta_{12} \\
    \beta_{13} 
  \end{pmatrix}
  =
  \begin{pmatrix}
    \overline{\beta}_{11} \\
    \overline{\beta}_{12} \\
    \overline{\beta}_{13}
  \end{pmatrix}
  =
  \begin{pmatrix}
    2^{\frac{2}{3}} \\
    2^{\frac{1}{3}} \\
    1
  \end{pmatrix}
  .
\end{equation}
We compute from (\ref{eq:bijkldef}) and (\ref{eq:bijklprimedef}) that
\begin{equation}
  \label{eq:exampleB}
  B_1 = B_1' =
  \begin{pmatrix}
    0 & 2 & 0 \\
    0 & 0 & 2 \\
    1 & 0 & 0 
  \end{pmatrix}
  ,\ B_2 = B_2' =
  \begin{pmatrix}
    0 & 0 & 2 \\
    1 & 0 & 0 \\
    0 & 1 & 0 
  \end{pmatrix}
  ,\ B_3 = B_3' =
  \begin{pmatrix}
    1 & 0 & 0 \\
    0 & 1 & 0 \\
    0 & 0 & 1
  \end{pmatrix}
  .
\end{equation}
For integers $c_1, c_2, c_3, c_1', c_2', c_3'$ we have
\begin{equation}
  \label{eq:exampleCCprime}
  C =
  \begin{pmatrix}
    c_3 & 2c_1 & 2c_2 \\
    c_2 & c_3 & 2c_1 \\
    c_1 & c_2 & c_3
  \end{pmatrix}
  ,\ C' =
  \begin{pmatrix}
    c_3' & 2c_1' & 2c_2' \\
    c_2' & c_3' & 2c_1' \\
    c_1' & c_2' & c_3' 
  \end{pmatrix}
  ,
\end{equation}
and so the conditions (\ref{eq:ccprimeconditions}) become $\gcd(c_1, c_2, c_3) = \gcd( c_1', c_2', c_3') = 1$ and
\begin{equation}
  \label{eq:exampleConditions}
  c_3c_1' + c_2 c_2' + c_1c_3' = 0,\quad 2c_1c_1' + c_3 c_2' + c_2 c_3' = 0.
\end{equation}
We note that the first constraint in (\ref{eq:exampleConditions}) implies that there is a well-defined coset $\Gamma_\infty \gamma \in \Gamma_\infty \backslash \Gamma$ with Pl\"ucker coordinates $c_1, c_2, c_3, c_1', c_2', c_3'$, but the second condition restricts to a rather thin subset of $\Gamma_\infty \backslash \Gamma$.
In addition, we note that (\ref{eq:exampleConditions}) implies that the vector $
\begin{pmatrix}
  c_1' & c_2' & c_3'
\end{pmatrix}
$ is proportional to the vector
\begin{equation}
  \label{eq:exampleProportional}
  \begin{pmatrix}
    c_2^2 - c_1c_3 & 2c_1^2 - c_2 c_3 & c_3^2 - 2c_1c_2
  \end{pmatrix}
  .
\end{equation}
In the case that the vector (\ref{eq:exampleProportional}) has coprime entries, the vectors must be equal up to sign, and one obtains Hooley's parameterization \cite{Hooley1978} from (\ref{eq:cubicparameterization2}).
A nice observation in this regard is that we in fact have
\begin{equation}
  \label{eq:examplem1}
  m_1 = \gcd(c_2^2 - c_1c_3, 2c_1^2 - c_2 c_3, c_3^2 - 2c_1c_2).
\end{equation}

We now move on to some applications of our correspondence and parameterization results.
We start with an approximation to the point $\left( \frac{\mu^{d-1}}{m}, \dots, \frac{\mu}{m} \right) \in \mathbb{R}^{d-1}/ \mathbb{Z}^{d-1}$ by one of $d$ explicit points that has rational coordinates all having the same denominator.

\begin{proposition}
  \label{proposition:generalapproximation}
  With the notation as in theorem \ref{theorem:parameterization}, let $C_{id}$ denote the $(d-1) \times (d-1)$ sub-matrix obtained from $C$ by removing the $i$th row and $d$th column, and set $\bm{u}_i$ to be the vector $(u_1,\dots,u_d)$ with the $i$th entry removed. Then for some $k$, $1\leq k \leq d$,
  \begin{equation}
    \label{eq:approximation}
    \begin{pmatrix}
      \frac{\mu^{d-1}}{m} \\ \vdots \\ \frac{\mu}{m} 
    \end{pmatrix} =
    C_{kd}^{-1}\bm{u}_k + O\left( \frac{1}{m} \right) \pmod{\mathbb{Z}^{d-1}},
  \end{equation}
  with the implied constant depending only on the polynomial $F$.
\end{proposition}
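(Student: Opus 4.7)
The plan is to extract the approximation from an exact identity supplied by Theorem \ref{theorem:parameterization1}, and then to bound the resulting error geometrically through the factorization $C = \mathfrak{B}_l \operatorname{diag}(\xi) \mathfrak{B}_1^{-1}$ implicit in (\ref{eq:parameterization1}). First I would rewrite (\ref{eq:parameterization2}) as the integer matrix equation $Cv = mu$, where $u = (u_1,\dots,u_d)^T$ and $v = (\mu^{d-1},\dots,\mu,1)^T$. For any $k$ with $\det C_{kd} \neq 0$, discarding the $k$-th equation and solving the remaining $(d-1)\times(d-1)$ system for $\mu^{d-1},\dots,\mu$ gives the exact identity
\[
\begin{pmatrix}\mu^{d-1}/m \\ \vdots \\ \mu/m\end{pmatrix} = C_{kd}^{-1}\bm{u}_k - \frac{1}{m}\,C_{kd}^{-1}\bm{c}_{kd},
\]
where $\bm{c}_{kd}$ is the $d$-th column of $C$ with the $k$-th entry removed. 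Cramer's rule turns the $j$-th entry of $C_{kd}^{-1}\bm{c}_{kd}$ into $\pm\det C_{kj}/\det C_{kd}$, so the proposition reduces to producing a single $k$ for which $|\det C_{kj}/\det C_{kd}|$ is bounded in terms of $F$ alone for every $j$.

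To control all the minors simultaneously I would use (\ref{eq:parameterization1}). Since $\xi \in \mathcal{D}$ and $|N(\xi)| = m/N(I_l)$, the log-embedding of $\xi$ sits at bounded distance (depending only on $F$) from $(d^{-1}\log m)(1,\dots,1)$ modulo the log-unit lattice, so $|\xi^{(r)}|$ is of order $m^{1/d}$ uniformly in $r$ and with constants depending only on $F$. Combined with the $F$-bounded entries of $\mathfrak{B}_l$ and $\mathfrak{B}_1^{-1}$, this produces an $F$-bound of order $m^{1/d}$ on every entry of $C$, and Hadamard's inequality then delivers $|\det C_{kj}|$ of order at most $m^{(d-1)/d}$ uniformly in $k$ and $j$.

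To locate the good $k$, I would use the cofactor expansion $m = \det C = \sum_k (-1)^{k+d}\,c_{kd}\,\det C_{kd}$. Combined with the upper bound on $|c_{kd}|$ and the triangle inequality, this forces $\max_k|\det C_{kd}|$ to be of order at least $m^{(d-1)/d}$ with constants depending only on $F$, and any $k$ attaining this maximum makes all ratios $|\det C_{kj}/\det C_{kd}|$ uniformly bounded in $F$, yielding the claimed $O(1/m)$ error. The main obstacle is matching the upper Hadamard bound with a corresponding lower bound on some $|\det C_{kd}|$: both must scale as $m^{(d-1)/d}$, the first coming from the Archimedean control $\xi \in \mathcal{D}$ and the second from pigeonhole applied to the cofactor expansion of $\det C = m$, and it is this coupling that forces the correct exponent and hence the stated approximation.
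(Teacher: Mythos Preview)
Your argument is correct and follows the paper's proof closely: the exact identity $(\mu^{d-j}/m) = C_{kd}^{-1}\bm{u}_k - \tfrac{1}{m}C_{kd}^{-1}\bm{c}_{kd}$ and the reduction via Cramer to the ratios $\det C_{kj}/\det C_{kd}$ are precisely equations (\ref{eq:lastcolumn1}) and (\ref{eq:errorterm3}) in the paper, and the bound $c_{ij}\ll m^{1/d}$ from $\xi\in\mathcal{D}$ is obtained the same way. The one genuine difference is how you secure the lower bound $\max_k|\det C_{kd}|\gg m^{(d-1)/d}$: you use the cofactor expansion of $\det C=m$ along the last column together with the upper bound $|c_{kd}|\ll m^{1/d}$ and pigeonhole, whereas the paper inverts the relation $C=\mathfrak{B}_l\operatorname{diag}(\xi)\mathfrak{B}_1^{-1}$ to write each $(\xi^{(j)})^{-1}$ as an $F$-bounded linear combination of the $\tfrac{1}{m}\det C_{kd}$ (equation (\ref{eq:xiinverse})) and then uses $|\xi^{(j)}|^{-1}\asymp m^{-1/d}$. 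Your route is slightly more self-contained and avoids any appeal to $\mathfrak{B}_1^{-1}$ or $\xi^{-1}$; the paper's route makes the link between the minors $\det C_{kd}$ and the embeddings of $\xi^{-1}$ explicit, which is of independent interest later when analysing the lattice $\Lambda(\bm r)$ in the proof of Proposition~\ref{proposition:spacing}.
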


We note that the size of the denominator of the approximating point $C_{id}^{-1} \bm{u}_i$ is about $m^{1 - 1/d}$.
Relative to the error $O(\frac{1}{m})$, this approximation is of the same strength as that produced by Dirichlet's theorem on simultaneous Diophantine approximation.
The point then of proposition \ref{proposition:generalapproximation} is that we have an actual construction of the approximation as opposed to mere existence.
The utility of this is illustrated in the proof of the following proposition.

\begin{proposition}
  \label{proposition:spacing}
  Let $M$ be a positive real number and let $B$ be a ball in $\mathbb{R}^{d-1}/\mathbb{Z}^{d-1}$ with radius $\frac{1}{M}$. Then the number of $\left( \frac{\mu^{d-1}}{m}, \dots, \frac{\mu}{m} \right) \in B$ with $F(\mu)\equiv 0 \pmod m$ and $M<m\leq 2M$ is bounded by a constant depending only on the polynomial $F$. 
\end{proposition}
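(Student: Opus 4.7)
The plan is to translate the spacing problem into a lattice-point counting problem via Proposition \ref{proposition:generalapproximation}, and then carry out the count using the Lipschitz properties of the parameterization from Theorem \ref{theorem:parameterization1}.

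For each root $\mu \pmod m$ in the count, Proposition \ref{proposition:generalapproximation} produces an index $k \in \{1,\dots,d\}$ and a rational vector $v_k = C_{kd}^{-1}\bm{u}_k \in (\mathbb{R}/\mathbb{Z})^{d-1}$ within distance $O_F(1/M)$ of the point $(\mu^{d-1}/m,\dots,\mu/m)$, so all such $v_k$ lie in a ball $B'$ of radius $O_F(1/M)$ concentric with $B$. To size these approximants I would use Theorem \ref{theorem:parameterization1} to write $C = \mathfrak{B}_l\,\mathrm{diag}(\xi)\,\mathfrak{B}_1^{-1}$, where $\xi\in\mathcal{D}$ and $\prod_j|\xi^{(j)}|\asymp_F M$: since $\mathcal{D}$ is a bounded fundamental domain for the totally positive units, all $|\xi^{(j)}|$ are of a common size $\asymp_F M^{1/d}$, so the entries of $C$ are $O_F(M^{1/d})$, and those of $C_{kd}^{-1}$ are $O_F(M^{-1/d})$ by Cramer's rule.

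By the bijection of Theorem \ref{theorem:parameterization1}, the roots in the count are in one-to-one correspondence with tuples $(l, c_1, \dots, c_d) \in \{1,\dots,h^+(\alpha)\} \times \mathbb{Z}^d$ satisfying $\xi = \sum c_i \overline{\beta}_{li}\in\mathcal{D}$, $|\xi^{(j)}|\asymp_F M^{1/d}$, and the stated $\gcd$ condition. Fixing $l$ and $k$, the matrix $C = \sum c_i B_{li}$ is linear in $(c_1, \dots, c_d)$ with coefficient matrices $B_{li}$ of $O_F(1)$ entries, so the derivative
\[
\frac{\partial v_k}{\partial c_i} = -C_{kd}^{-1}\Bigl(\frac{\partial C_{kd}}{\partial c_i}\Bigr)v_k + C_{kd}^{-1}\frac{\partial \bm{u}_k}{\partial c_i}
\]
has size $O_F(M^{-1/d})$ modulo $\mathbb{Z}^{d-1}$. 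The constraints $\xi \in \mathcal{D}$ and $|\xi^{(j)}|\asymp_F M^{1/d}$ confine $(c_1, \dots, c_d)$ to a $(d-1)$-dimensional slab in $\mathbb{R}^d$ (bounded in the norm direction and of bounded geometry in the unit directions); on this slab the preimage of $B'$ under $v_k$ has lattice-diameter $O_F(M^{-(d-1)/d}) < 1$ once $M$ is sufficiently large, and so contains only $O_F(1)$ lattice points. Summing over the $O_F(1)$ choices of $(l,k)$ and handling $M = O_F(1)$ by the trivial bound gives the proposition.

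The main obstacle is that $\bm{u}_k$ is an integer vector determined only up to integer multiples of the columns of $C_{kd}$ by (\ref{eq:ujdef1}), so the derivative calculation above must be interpreted with care: as $(c_1,\dots,c_d)$ varies through $\mathbb{Z}^d$, $\bm{u}_k$ jumps discretely. I would handle this by choosing smooth local branches of $\bm{u}_k(c)$ and verifying that the jumps produce exactly the integer shifts of $v_k$ that vanish on passage to $(\mathbb{R}/\mathbb{Z})^{d-1}$. A secondary issue is the anisotropy of the slab $\{\xi \in \mathcal{D} : |\xi^{(j)}|\asymp M^{1/d}\}$, which requires a short geometry-of-numbers argument, based on the fact that $\mathcal{D}$ is a fixed fundamental domain of bounded injectivity radius under the unit action, to confirm that only $O_F(1)$ lattice points lie in any ball of the relevant lattice-diameter.
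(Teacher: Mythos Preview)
Your argument has a genuine gap at the key step. You compute an \emph{upper} bound $O_F(M^{-1/d})$ on the size of $\partial v_k/\partial c_i$ and then claim that the preimage of the ball $B'$ of radius $O(1/M)$ has lattice-diameter $O_F(M^{-(d-1)/d})$. That inference requires a \emph{lower} bound $\gg M^{-1/d}$ on the smallest singular value of the Jacobian of $(c_1,\dots,c_d)\mapsto v_k$, not the upper bound you established: an upper bound on the derivative only says that many parameters $(c_i)$ map into a small region, which is the opposite of what you need. Nothing in your outline rules out the Jacobian being nearly degenerate, i.e.\ the matrix $C_{kd}$ being highly distorted even though its entries are all $O(M^{1/d})$; in that case a whole line of integer $(c_i)$'s could map into $B'$. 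Concretely, the non-degeneracy you are tacitly using is the statement that $(\det C_{kd})^{-(d-2)/(d-1)}(\det C_{kd})C_{kd}^{-1}$ stays in a fixed compact subset of $\mathrm{SL}(d-1,\mathbb{R})$, and this requires a separate argument using the balancedness $|\xi^{(j)}|\asymp m^{1/d}$ together with Hadamard's inequality; it does not follow from the entrywise bounds on $C$ alone.

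The paper's proof is organised around exactly this point and proceeds differently. Rather than differentiate, it observes that each approximant $\bm r = C_{kd}^{-1}\bm u_k$ is a torsion point of order $q=|\det C_{kd}|\asymp M^{1-1/d}$, and that two distinct torsion points of order $\le Q$ are separated by at least $(qQ)^{-1}$ times the shortest nonzero vector of a certain lattice $\Lambda(\bm r)$ generated by the columns of $(\det C_{kd})C_{kd}^{-1}$. The non-degeneracy argument above (Lemma~\ref{lemma:latticecompact}) then shows this shortest vector is $\gg M^{1-2/d}$, giving spacing $\gg 1/M$ between distinct approximants and hence $O(1)$ of them in $B'$. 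The proof concludes by showing that the approximant $\bm r$ determines $(c_1,\dots,c_d)$ up to $O_F(1)$ choices: $\bm r$ determines $\Lambda(\bm r)$, only $O_F(1)$ bases of $\Lambda(\bm r)$ lie in the fixed compact set, and for $d\ge 3$ the minors $C_{kd}$ determine the $c_i$ by a linear-independence argument for the matrices $B_{li}$ (with a separate treatment when $d=2$). This two-step structure --- first spacing of the approximants, then bounded fibres of the map $(c_i)\mapsto \bm r$ --- is what replaces, and makes rigorous, the bi-Lipschitz control your derivative sketch presumes.
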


The proof of proposition \ref{proposition:spacing} relies on two main ingredients: that different approximating points in (\ref{eq:approximation}) are well-spaced from each other and that not too many different points $(\frac{\mu^{d-j}}{m})$ correspond to the same approximating point.
Proving the first claim uses a general fact that rational points in $\mathbb{R}^{d-1}$ with the same denominator are well-spaced unless they are contained in rational hyperplanes of small height, see \cite{CheungChevallier2016} and lemma \ref{lemma:torsionpointspacing}, and so it suffices to show that the points (\ref{eq:approximation}) are not contained in such rational hyperplanes.
On the other hand, proving the second claim uses the rational hyperplanes that do contain the approximating point to show that the map taking the point $(\frac{\mu^{d-j}}{m})$ to the approximation is $O(1)$ to $1$.
Obviously neither claim could be verified with only the existence of the approximation given by Dirichlet's theorem.

In a different direction, we give an application of theorem \ref{theorem:cubiccorrespondence} to finding an explicit Euler product for the co-type zeta function.
Apart from the local factors associated to the ramified primes, we can do this for any monogenic cubic order $\mathbb{Z}[\alpha]$, but for explicitness regarding these ramified primes, we only present the result for the specific example $\mathbb{Z}[2^{1/3}]$.

For an ideal $I \subset \mathbb{Z}[2^{1/3}]$ denote by $N_1(I)$, $N_2(I)$, $N_3(I)$ the invariant factors of $\mathbb{Z}[2^{1/3}] / I$, so
\begin{equation}
  \label{eq:invariantfactors}
  \mathbb{Z}[2^{\frac{1}{3}}] / I = \mathbb{Z} / N_1(I)\mathbb{Z} \oplus \mathbb{Z} / N_2(I)\mathbb{Z} \oplus \mathbb{Z} / N_2(I) \mathbb{Z}
\end{equation}
with $N_3(I) \mid N_2(I) \mid N_1(I)$.
Then the co-type zeta function for $\mathbb{Z}[2^{1/3}]$ is defined by
\begin{equation}
  \label{eq:cotypezeta}
  \zeta_{\mathbb{Z}[2^{1/3}]}(s_1, s_2, s_3) = \sum_{0\neq I \subset \mathbb{Z}[2^{1/3}]} N_1(I)^{-s_1} N_2(I)^{-s_2} N_3(I)^{-s_3}.
\end{equation}
This kind of object is of interest in the study of subgroup growth, see for example \cite{LubotzkySegal2003}.
More directly analogous to our proposition \ref{proposition:cotypezeta} is the calculation of the cotype zeta function for sublattices of $\mathbb{Z}^d$, see for example \cite{Petrogradsky2007} and \cite{ChintaKaplanKoplewitz2017}.

\begin{proposition}
  \label{proposition:cotypezeta}
  We have
  \begin{equation}
    \label{eq:cotypezeta1}
    \begin{split}
      & \zeta_{\mathbb{Z}[2^{1/3}]}(s_1, s_2, s_3) = \\
      & \quad = (1 + 2^{-s_1} + 2^{-s_1 - s_2})(1 + 3^{-s_1} + 3^{-s_1 - s_2}) \zeta(s_1 + s_2 + s_3) \\
      & \quad \quad  \times \prod_{p\in\mathcal{P}_1} \left( \frac{ 1 + 2p^{-s_1} + 2p^{-s_1 -s_2} + p^{-2s_1 -s_2}}{(1 - p^{-s_1})(1 - p^{-s_1 - s_2})} \right)\prod_{p\in\mathcal{P}_2} \left( \frac{1 - p^{-2s_1 - s_2}}{(1 - p^{-s_1})(1-p^{-s_1-s_2})}\right), \\
    \end{split}
  \end{equation}
  where $\mathcal{P}_1$ is the set of primes $p\in \mathbb{Z}$ that totally split in $\mathbb{Z}[2^{1/3}]$ and $\mathcal{P}_2$ is the set of primes that factor as degree one times degree two primes in $\mathbb{Z}[2^{1/3}]$.
\end{proposition}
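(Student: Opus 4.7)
The plan is to first reduce the sum over ideals to a sum over ideals without rational integer divisors, then apply Theorem \ref{theorem:cubiccorrespondence} to parameterize these by tuples $(m_1, m_2, \mu_1, \mu_2)$, and finally evaluate the resulting Dirichlet series as an Euler product by splitting up each prime $p$ according to its factorization in $\mathbb{Z}[2^{1/3}]$. For the first reduction, every nonzero ideal $I$ factors uniquely as $I = nI'$, where $n$ is the positive generator of $\{k \in \mathbb{Z}: k \mid I\}$ and $I'$ has no rational integer divisor; scaling any Hermite normal form basis of $I'$ by $n$ shows $N_j(I) = n\, N_j(I')$, so the factor $\sum_n n^{-s_1 - s_2 - s_3} = \zeta(s_1+s_2+s_3)$ separates cleanly. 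For the second step, Theorem \ref{theorem:cubiccorrespondence} combined with Proposition \ref{proposition:invariantfactors} (applied to the HNF basis (\ref{eq:idealcorrespondence}), whose diagonal is $(1, m_1, m_1m_2)$) gives a bijection between ideals $I'$ without rational integer divisors and tuples $(m_1, m_2, \mu_1 \bmod m_1, \mu_2 \bmod m_2)$ satisfying $F(\mu_i) \equiv 0 \pmod{m_i}$ and $\gcd(m_1, m_2, \mu_1 - \mu_2) = 1$, with $N_1(I') = m_1m_2$, $N_2(I') = m_1$ and $N_3(I') = 1$. Substituting produces
\begin{equation*}
  \zeta_{\mathbb{Z}[2^{1/3}]}(s_1, s_2, s_3) = \zeta(s_1 + s_2 + s_3) \sum_{(m_1, m_2, \mu_1, \mu_2)} m_1^{-s_1 - s_2}\, m_2^{-s_1}.
\end{equation*}

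The Chinese Remainder Theorem separates all of the defining conditions on $(m_1, m_2, \mu_1, \mu_2)$ across primes, so the inner sum factors as $\prod_p L_p(s_1, s_2)$, where writing $a = v_p(m_1)$ and $b = v_p(m_2)$,
\begin{equation*}
  L_p(s_1, s_2) = \sum_{a, b \geq 0} p^{-a(s_1 + s_2) - b s_1}\, N_p(a, b)
\end{equation*}
and $N_p(a, b)$ counts pairs $(\mu_1 \bmod p^a, \mu_2 \bmod p^b)$ with $F \equiv 0$ mod the respective modulus and with $\mu_1 \not\equiv \mu_2 \pmod p$ whenever both $a, b \geq 1$. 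The computation of each $L_p$ thus reduces to counting roots of $F$ modulo prime powers.

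The final step is a case split on the splitting type of $p$. For $p \in \mathcal{P}_1$, Hensel's lemma lifts each of the three roots of $F \bmod p$ uniquely, so $N_p(a, 0) = N_p(0, b) = 3$ and $N_p(a, b) = 3 \cdot 2 = 6$ for $a, b \geq 1$; summing four geometric series and simplifying gives the claimed factor with numerator $1 + 2p^{-s_1} + 2p^{-s_1 - s_2} + p^{-2s_1 - s_2}$. For $p \in \mathcal{P}_2$ there is a unique root of $F$ mod each $p^k$, so the coprimality forces $N_p(a, b) = 0$ whenever $a, b \geq 1$, producing the numerator $1 - p^{-2s_1 - s_2}$. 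Inert primes have no roots at all, so contribute $L_p = 1$. For the ramified primes $p = 2, 3$, a direct inspection (the cubes mod $4$ are $\{0, 1\}$ and the cubes mod $9$ are $\{0, 1, 8\}$, neither containing $2$) shows that $F(\mu) = \mu^3 - 2$ has exactly one root mod $p$ and no roots mod $p^2$, so $N_p(a, b)$ vanishes outside $\{(0,0), (1, 0), (0, 1)\}$, where it equals $1$, giving the factor $1 + p^{-s_1} + p^{-s_1 - s_2}$. I expect the principal obstacle to be bookkeeping at the ramified primes: specifically, verifying that every ideal $I'$ without rational integer divisors does yield a tuple satisfying the coprimality $\gcd(m_1, m_2, \mu_1 - \mu_2) = 1$ even though Theorem \ref{theorem:cubiccorrespondence} only asserts this under $\gcd(m_1m_2, D) = 1$. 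This is handled by noting that at $p \in \{2, 3\}$ the only ideals $I'$ contributing are $(1)$, $\mathfrak{p}_p$, $\mathfrak{p}_p^2$ (since $\mathfrak{p}_p^3 = (p)$ is excluded), and these give $\min(v_p(m_1), v_p(m_2)) = 0$, making the coprimality vacuous at these primes. Multiplying the local factors then reproduces (\ref{eq:cotypezeta1}).
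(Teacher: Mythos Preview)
Your proof is correct and follows essentially the same route as the paper: pull out $\zeta(s_1+s_2+s_3)$ by removing rational integer divisors, parameterize the remaining ideals by tuples $(m_1,m_2,\mu_1,\mu_2)$ via Theorem~\ref{theorem:cubiccorrespondence}, and evaluate the resulting Dirichlet series as an Euler product by casing on the splitting type of each prime. The only difference is organizational: the paper first splits the sum into four pieces $S_1,\dots,S_4$ according to $\gcd(m_2,6)$ and then forms Euler products inside each, whereas you pass directly to a local factor $L_p$ at every prime; your observation that $\min(v_p(m_1),v_p(m_2))=0$ at $p\in\{2,3\}$ (since $v_p(N(I'))\le 2$ for a totally ramified prime) is precisely what underlies the paper's restrictions on $m_1$ in $S_2,S_3,S_4$.
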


We remark that we can explicitly describe $\mathcal{P}_2$ as the primes in $\mathbb{Z}$ that are $2 \pmod 3$ and $\mathcal{P}_1$ as those that are $1 \pmod 3$ and representable by the binary quadratic form $X^2 + 27Y^2$.
We also remark that (\ref{eq:cotypezeta1}) generalizes the classic factorization of the Dedekind zeta function in the quadratic case, (\ref{eq:dedekindzeta1}).
Indeed, (\ref{eq:dedekindzeta1}) can easily be modified to give an Euler product for the co-type zeta function for a quadratic order, and on the other hand (\ref{eq:cotypezeta1}) gives an interesting factorization of the Dedekind zeta function for $\mathbb{Z}[2^{1/3}]$ after setting $s_1 = s_2 = s_3$.
We note that in contrast to the quadratic setting, the Dirichlet series $\zeta_{\mathbb{Z}[2^{\frac{1}{3}}]}(s, s, s) / \zeta(3s)$ does not count the roots of the cubic congruence but rather pairs of roots as in theorem \ref{theorem:cubiccorrespondence}. 

Our final application is to the composition of ideals, reflecting the fact that the arithmetic connection between roots of congruence and ideals goes deeper than just the counting illustrated in proposition \ref{proposition:cotypezeta}.
Using theorem \ref{theorem:correspondence} we obtain the following theorem.

\begin{proposition}
  \label{proposition:composition}
  Let $I$ and $J$ be unramified ideals in $\mathbb{Z}[\alpha]$, i.e. coprime to the discriminant, such that $\mathbb{Z}[\alpha]/ I$ and $\mathbb{Z}[\alpha] / J$ are additively cyclic.
  Let $\mu \pmod m$ and $\nu \pmod n$ be the roots of the congruence corresponding to $I$ and $J$ via theorem \ref{theorem:correspondence}.
  Then $\mathbb{Z}[\alpha] / IJ$ is cyclic if and only if $\mu \equiv \nu \pmod {\gcd(m,n)}$, and in this is the case $IJ$ corresponds to the unique root $\tilde{\mu} \pmod {mn}$ satisfying $\tilde{\mu} \equiv \mu \pmod {m}$ and $\tilde{\mu} \equiv \nu \pmod n$. 
\end{proposition}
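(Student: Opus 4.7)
The plan is to work locally at each prime $p$ dividing $mn$, exploiting the unramified hypothesis $\gcd(mn,D)=1$ to ensure $\mathbb{Z}[\alpha]_{(p)}$ is a principal ideal domain and that simple roots of $F \pmod p$ lift uniquely by Hensel's lemma.

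The forward direction is essentially formal: if $\mathbb{Z}[\alpha]/IJ$ is cyclic, theorem \ref{theorem:correspondence} associates a root $\tilde{\mu} \pmod{mn}$ with $\alpha \equiv \tilde{\mu} \pmod{IJ}$. Composing the reduction $\mathbb{Z}[\alpha]/IJ \twoheadrightarrow \mathbb{Z}[\alpha]/I$ with the identity $\alpha \equiv \mu \pmod I$ gives $\tilde{\mu} \equiv \mu$ modulo $I \cap \mathbb{Z} = m\mathbb{Z}$, the latter intersection being immediate from reading off the constant coordinate in the basis (\ref{eq:correspondence}). Symmetrically $\tilde{\mu} \equiv \nu \pmod n$, whence $\mu \equiv \nu \pmod{\gcd(m,n)}$.

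For the converse, I would first establish a structural lemma: an invertible ideal $K \subset \mathbb{Z}[\alpha]$ with cyclic quotient of order $k$ coprime to $D$ satisfies $K_{(p)} = \mathfrak{p}^{v_p(k)}$ at each $p \mid k$, where $\mathfrak{p}$ is a uniquely determined prime of residue degree one. This holds because $\mathbb{Z}[\alpha]_{(p)}$ is a PID and cyclicity of the $p$-primary part of $\mathbb{Z}[\alpha]/K$ forces all but one prime factor of $K_{(p)}$ to be trivial and the surviving one to have trivial residue field extension. Applied to $I$ and $J$, the relevant primes are $\mathfrak{p} = (p, \alpha - \mu)$ and $\mathfrak{q} = (p, \alpha - \nu)$, which coincide at every $p \mid \gcd(m,n)$ by the compatibility hypothesis. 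I then let $\tilde{\mu} \pmod{mn}$ be the Hensel lift satisfying $\tilde{\mu} \equiv \mu \pmod m$ and $\tilde{\mu} \equiv \nu \pmod n$, and take $\tilde{I}$ to be the corresponding ideal from theorem \ref{theorem:correspondence}, generated by $\alpha - \tilde{\mu}$ and $mn$.

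It remains to show $\tilde{I} = IJ$, which reduces to the containment $\tilde{I} \subseteq IJ$ since both have index $mn$ in $\mathbb{Z}[\alpha]$ by multiplicativity of the norm. Plainly $mn \in IJ$, so only $\alpha - \tilde{\mu} \in IJ$ requires work, which I verify locally: at $p \nmid mn$ there is nothing to check, while at $p \mid m$ with $p \nmid n$ the decomposition $\alpha - \tilde{\mu} = (\alpha - \mu) + (\mu - \tilde{\mu})$ places both summands in $IJ_{(p)} = \mathfrak{p}^{v_p(m)}$, and the case $p \mid n$, $p \nmid m$ is symmetric. The main obstacle is $p \mid \gcd(m,n)$, where the naive decomposition only gives $\alpha - \tilde{\mu} \in I_{(p)} \cap J_{(p)} = \mathfrak{p}^{\max(v_p(m),v_p(n))}$, strictly weaker than $\mathfrak{p}^{v_p(mn)} = IJ_{(p)}$. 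To gain the missing valuation I would invoke the norm identity $N(\alpha - \tilde{\mu}) = \pm F(\tilde{\mu})$: the Hensel-lifted property supplies $v_p(F(\tilde{\mu})) \geq v_p(mn)$, while $\tilde{\mu} \equiv \mu \pmod p$ forces every prime divisor of $(\alpha - \tilde{\mu})_{(p)}$ to have residue field containing the class of $\mu$ and hence to equal $\mathfrak{p}$; so the full norm valuation concentrates at $\mathfrak{p}$, giving $v_{\mathfrak{p}}(\alpha - \tilde{\mu}) \geq v_p(mn)$ and therefore $\alpha - \tilde{\mu} \in IJ_{(p)}$, completing the proof.
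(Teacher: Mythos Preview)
Your proof is correct, and although both you and the paper argue locally and invoke Hensel's lemma, the organization differs. The paper first proves a containment lemma by a direct matrix calculation (namely that $I\subset J$ if and only if $n\mid m$ and $\mu\equiv\nu\pmod n$), then combines two observations: the coprime case $\gcd(m,n)=1$ (where the Chinese remainder theorem plus the containment lemma and a norm comparison give $\tilde{I}=I\cap J=IJ$), and the prime-power case $P^k$ (handled by Hensel lifting and the containment lemma again). The general statement is then obtained, somewhat implicitly, by factoring $I$ and $J$ into prime powers.

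Your route avoids the explicit two-step decomposition. Instead of the matrix containment lemma, you use the structural observation that a cyclic-quotient unramified ideal is locally a power of a single degree-one prime, and then the norm identity $N(\alpha-\tilde{\mu})=\pm F(\tilde{\mu})$ lets you read off $v_{\mathfrak{p}}(\alpha-\tilde{\mu})\ge v_p(mn)$ at the overlapping primes in a single stroke. This is a cleaner valuation-theoretic argument and makes the role of the unramified hypothesis (simple roots, hence the full norm valuation concentrates at $\mathfrak{p}$) more transparent; the paper's approach, by contrast, stays closer to the explicit bases of theorem~\ref{theorem:correspondence} and is more self-contained relative to the rest of the paper.
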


We also note that for a degree one prime $P$ corresponding to a root $\mu \pmod p$, then all the conjugate primes to $P$ correspond to the different roots modulo $p$.
In the quadratic case, this means that the conjugate $P'$ corresponds to the root $-a_1 - \mu_1$, and so we note that multiplying the ideals $P$, $P'$ gives the rational integer $p$.
Using this fact together with proposition \ref{proposition:composition} gives a full account of how one can understand composition of ideals in quadratic orders in terms of roots of congruences.

Clearly this breaks down in higher degree since the product of two conjugate, distinct degree one primes is no longer a rational integer.
This is the same phenomenon that motivated our extension of theorem \ref{theorem:correspondence} to theorem \ref{theorem:cubiccorrespondence}, and in the same way we can ask if there is an extension of proposition \ref{proposition:composition} that gives a more robust understanding of composition of ideals in terms of roots of congruence, at least in the cubic setting.
Proposition \ref{proposition:cubiccomposition} provides such an extension.

\begin{proposition}
  \label{proposition:cubiccomposition}
  Let $d = 3$, let $I$ and $J$ be unramified ideals in $\mathbb{Z}[\alpha]$ that are not divisible by rational integers, and let $\mu_1 \pmod {m_1}$, $\mu_2\pmod {m_2}$ and $\nu_1 \pmod {n_1}$, $\nu_2 \pmod{n_2}$ be the roots of the congruence corresponding to $I$ and $J$ via theorem \ref{theorem:cubiccorrespondence}.
  Then $IJ$ is not divisible by rational integers if and only if $\mu_1 \equiv \nu_1 \pmod {\gcd(m_1,n_1)}$, $\gcd(m_1, n_2, \mu_1 - \nu_2) = 1$, and $\gcd(m_2, n_1, \mu_2 - \nu_1) = 1$.
  Moreover, if $IJ$ is not divisible by rational integers, then, setting $l = \frac{\gcd(m_2,n_2)}{\gcd(m_2,n_2, \mu_2 - \nu_2)}$, $IJ$ corresponds to the roots $\tilde{\mu}_1 \pmod {m_1n_1 l}$ and $\tilde{\mu}_2 \pmod {\frac{m_2n_2}{l^2}}$ satisfying $\tilde{\mu}_1 \equiv \mu_1 \pmod{m_1}$, $\tilde{\mu}_1 \equiv \nu_1 \pmod {n_1}$, $\gcd(l, (\tilde{\mu}_1 - \mu_2)(\tilde{\mu}_1 - \nu_2)) = 1$, and $\tilde{\mu}_2 \equiv \mu_2 \pmod {\frac{m_2}{l}}$, $\tilde{\mu}_2 \equiv \nu_2 \pmod {\frac{n_2}{l}}$.
\end{proposition}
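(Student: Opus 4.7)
The proof proceeds by localization at primes $p \nmid D$; since $I$ and $J$ are unramified and not divisible by rational integers, only such primes appear in $m_i$ and $n_i$, and every claim depends only on local structure (invariant factors and the action of $\alpha$ being local invariants). At such $p$ the completion $\mathbb{Z}[\alpha] \otimes \mathbb{Z}_p$ decomposes as a product of unramified DVRs indexed by the primes above $p$. Inert primes contribute nothing since the unique prime above an inert $p$ equals $(p)$. At a splitting type $(1,2)$ prime $p = \mathfrak{P}_1 \mathfrak{P}_2$, a short invariant-factor calculation on $p^a \mathbb{Z}_p \times p^b R_p$ forces the local part of an ideal not divisible by $p$ to be exclusively $\mathfrak{P}_1^a$ or exclusively $\mathfrak{P}_2^b$; the corresponding local data $(m_i, \mu_i)$ is then immediate. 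The main case is a totally split $p = \mathfrak{P}_1 \mathfrak{P}_2 \mathfrak{P}_3$ with roots $\rho_i \pmod p$ and Hensel lifts $\tilde\rho_i \in \mathbb{Z}_p$. For $I_p = \prod \mathfrak{P}_i^{a_i}$ with $\min a_i = 0$, reordering so $a_1 \geq a_2 \geq a_3 = 0$, the CRT isomorphism $\mathbb{Z}[\alpha] \otimes \mathbb{Z}_p \cong \mathbb{Z}_p^3$ combined with $\tilde\rho_1 + \tilde\rho_2 + \tilde\rho_3 = -a_1$ yields $(m_1)_p = p^{a_2}$, $(m_2)_p = p^{a_1 - a_2}$, $\mu_1 \equiv \tilde\rho_3 \pmod{p^{a_2}}$, and $\mu_2 \equiv \tilde\rho_1 \pmod{p^{a_1 - a_2}}$. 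In words, $\mu_1$ tracks the root of minimum exponent in $I_p$ (when unique) and $\mu_2$ tracks the root of maximum exponent.

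With this dictionary in hand, the first claim becomes a finite case check. Writing $a(\rho), b(\rho)$ for the local exponent patterns, $(IJ)_p$ is divisible by $p$ precisely when no $\rho$ satisfies $a(\rho) = b(\rho) = 0$. Enumerating the possibilities one sees that each failure violates one of the three proposition conditions at $p$: condition (1), $\mu_1 \equiv \nu_1 \pmod{\gcd(m_1, n_1)}$, corresponds to the case when both $I$ and $J$ have unique local minima at $p$ and these differ; conditions (2) and (3) handle the asymmetric cases where one of $I, J$ is locally a single prime power whose root coincides with the other's unique minimum. For the second claim, one reads off the invariant factor data of $IJ$ from $c(\rho) = a(\rho) + b(\rho)$ via the same recipe, finding $v_p(l) = \min(v_p(m_2), v_p(n_2)) - v_p(\mu_2 - \nu_2)$ when both valuations are positive (and $v_p(l) = 0$ otherwise); this precisely measures the extent to which the maximum roots of $I_p$ and $J_p$ disagree. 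The identities $\tilde m_1 = m_1 n_1 l$ and $\tilde m_2 = m_2 n_2/l^2$ then follow from $(\tilde m_1)_p = p^{c_{(2)}}$ and $(\tilde m_2)_p = p^{c_{(1)} - c_{(2)}}$ for the sorted $c_{(1)} \geq c_{(2)} \geq c_{(3)} = 0$, and the congruences fixing $\tilde\mu_1, \tilde\mu_2$ are immediate from CRT and Hensel's lemma.

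The principal technical obstacle is the bookkeeping at degenerate edges of the totally split case where two exponents coincide, rendering one of $m_1, m_2, n_1, n_2$ trivial locally and the associated residue vacuous. The proposition's statements must be interpreted carefully in these cases (for example, a condition $\gcd(\,\cdot\,, \mu_i - \nu_j) = 1$ becomes vacuous when the other gcd factor is $1$, and the formula for $l$ must be shown to specialize correctly). Particularly worth flagging is the condition $\gcd(l, (\tilde\mu_1 - \mu_2)(\tilde\mu_1 - \nu_2)) = 1$ selecting the correct Hensel lift for $\tilde\mu_1$: it encodes that the new minimum-exponent root of $IJ$ at each prime dividing $l$ avoids the maximum-exponent roots of both $I$ and $J$ there, which is the combinatorial heart of the composition formula.
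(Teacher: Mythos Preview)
Your proposal is correct and takes a genuinely different route from the paper. The paper does not localize at all: after reducing to coprime norms by a Chinese-remainder lemma (showing directly that the ideal $\tilde I$ built from the CRT-combined roots sits inside both $I$ and $J$ via an explicit integral change-of-basis matrix, then comparing norms), it handles the prime-power case by a second explicit matrix computation. Concretely, for a totally split $p$ the paper writes down the Hermite-normal-form basis of $P_2^k P_3^l$ as in theorem~\ref{theorem:cubiccorrespondence}, posits the candidate containment matrices $A$ into $P_2^k$ and $P_3^l$, and verifies the one nontrivial entry by checking the congruence for $\lambda$ from (\ref{eq:lambdadef})--(\ref{eq:gcdcongruence1}) against the Hensel-lifted roots. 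Everything is done in the global basis $\{\alpha^2,\alpha,1\}$, and the identification of $\mu_1,\mu_2$ with the minimum- and maximum-exponent roots emerges from these computations rather than from a structural decomposition.

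Your localization argument trades those matrix checks for the isomorphism $\mathbb{Z}[\alpha]\otimes\mathbb{Z}_p \cong \mathbb{Z}_p^3$ and Vieta's relation $\sum\tilde\rho_i = -a_1$, which lets you read the Hermite data off the sorted exponent vector in one step. This is cleaner bookkeeping and makes the degenerate-edge cases (equal exponents, $(1,2)$ splitting) more transparent, at the cost of importing the local structure theorem that the paper deliberately avoids. The paper's approach, by contrast, stays entirely within the elementary framework of section~\ref{sec:cubiccorrespondence} and reuses the $\lambda,\kappa$ formulas already derived there, so no new ingredients are needed. Either approach must still carry out the combinatorial case analysis you flag in your last paragraph; the paper leaves much of that implicit after establishing the key prime-power lemma, whereas your plan is more explicit about it.
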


We illustrate this proposition by considering some examples.
First we note that if ideals $I$ and $J$ have coprime norm, then proposition \ref{proposition:cubiccomposition} simply states that the product ideal $IJ$ corresponds to the roots obtained from the corresponding to $I$ and $J$ by the Chinese remainder theorem.
We remark that verifying this is in fact the first step towards proving proposition \ref{proposition:cubiccomposition}, see lemma \ref{lemma:coprimecubiccomposition}.

Now suppose that $N(I)$ and $N(J)$ are powers of the same prime $p$, $p$ not dividing the discriminant of $F$.
Then $p$ either remains prime in $\mathbb{Z}[\alpha]$, factors as a degree one times a degree two prime ideal, or factors as a product of degree one prime ideals.
As implied by either theorem \ref{theorem:cubiccorrespondence} or the Dedekind-Kummer theorems, these cases correspond exactly to whether $F(\mu) \equiv 0 \pmod p$ has zero, one, or three solutions.
The case when $p$ factors in $\mathbb{Z}[\alpha]$ as a degree one prime $P_1$ times a degree two prime $P_2$, bases for $P_1$ and $P_2$ are given respectively by
\begin{equation}
  \label{eq:degreetwobases}
  \begin{pmatrix}
    1 & a_1 & \mu^2 - a_1\mu \\
    0 & 1 & -\mu \\
    0 & 0 & p
  \end{pmatrix}
  \begin{pmatrix}
    \alpha^2 \\
    \alpha \\
    1
  \end{pmatrix}
  , \quad
  \begin{pmatrix}
    1 & \mu + a_1 & \mu^2 + a_1\mu + a_2 \\
    0 & p & 0 \\
    0 & 0 & p
  \end{pmatrix}
  \begin{pmatrix}
    \alpha^2 \\
    \alpha \\
    1
  \end{pmatrix}
  ,
\end{equation}
where $\mu \pmod m$ is the unique solution to $F(\mu) \equiv 0 \pmod m$.
From proposition \ref{proposition:composition} it follows that powers of $P_1$ correspond to lifting $\mu$ to roots modulo powers of $p$, and proposition \ref{proposition:cubiccomposition} implies that the same holds for powers of $P_2$.

In the case when $p$ factors as the product of three degree one primes $P_1, P_2, P_3$, we write the basis for $P_j$ as
\begin{equation}
  \label{eq:degreeonebases}
  \begin{pmatrix}
    1 & a_1 & -\mu_j^2 - a_1\mu_j \\
    0 & 1 & -\mu_j \\
    0 & 0 & p
  \end{pmatrix}
  ,
\end{equation}
where $\mu_j\pmod p$ is one of the three solutions to $F(\mu) \equiv 0 \pmod p$.
As before, it follows from proposition \ref{proposition:composition} that taking powers of $P_j$ corresponds to lifting $\mu_j$ to roots modulo powers of $p$.
More interesting is that proposition \ref{proposition:cubiccomposition} implies that $P_2P_3$ has basis
\begin{equation}
  \label{eq:P2P3basis}
  \begin{pmatrix}
    1 & \mu_1 + a_1 & \mu_1^2 + a_1\mu_1 + a_2 \\
    0 & p & 0 \\
    0 & 0 & p
  \end{pmatrix}
  \begin{pmatrix}
    \alpha^2 \\
    \alpha \\
    1
  \end{pmatrix}
  ,
\end{equation}
and we note that in the context of proposition \ref{proposition:cubiccomposition}, this is consistent with $P_1P_2P_3$ being divisible by a rational integer.
It is also interesting to consider $P_2^2P_3$, which has a basis of the form
\begin{equation}
  \label{eq:P22P3basis}
  \begin{pmatrix}
    1 & \mu_1 + a_1 & * \\
    0 & p & -\mu_2 p \\
    0 & 0 & p^2
  \end{pmatrix}
  \begin{pmatrix}
    \alpha^2 \\
    \alpha \\
    1
  \end{pmatrix}
  .
\end{equation}
One can understand proposition \ref{proposition:cubiccomposition} in this setting as stating that, when $l\leq k$, $P_2^k P_3^l = P_2^{k-l}(P_2P_3)^l$ corresponds to $\mu_1$ lifted to a root modulo $p^l$ and $\mu_2$ lifted to a root modulo $p^{k-l}$.
Verifying this is in fact a key step towards proving proposition \ref{proposition:cubiccomposition}, see lemma \ref{lemma:primecubiccomposition}.

\subsection*{Acknowledgements}
\label{acknowledgements}

The author completed much of the work presented here for his Ph.D. thesis, and thus under the supervision of his advisor, Henryk Iwaniec.
He would like to express his sincere gratitude to Henryk Iwaniec and to the rest of his committee members, Alex Kontorovich, Steven D. Miller, and Nigel J.E. Pitt.
The author would also like to thank Jens Marklof for his encouragement to finish this paper and the referee for the careful reading and many helpful suggestions.

\section{Correspondence between roots and ideals}
\label{sec:correspondence}

In this section we prove theorems \ref{theorem:correspondence} and \ref{theorem:cubiccorrespondence}.
We start with a sublattice $I$ of the ring $\mathbb{Z}[\alpha]$, which has a unique basis $\{\beta_1, \dots, \beta_d\}$ in Hermite normal form.
This is to say that
\begin{equation}
  \label{eq:hnfbasis}
  \begin{pmatrix}
    \beta_1 \\
    \vdots \\
    \beta_{d-1} \\
    \beta_d 
  \end{pmatrix} = B
  \begin{pmatrix}
    \alpha^{d-1} \\
    \vdots \\
    \alpha \\
    1
  \end{pmatrix}
\end{equation}
where $ B = (b_{ij})$ is upper-triangular, i.e. $b_{ij} = 0$ if $i > j$, and the entries satisfy $b_{jj} > 0$ and $0 \leq b_{ij} < b_{jj}$ for all $i< j$.

Since $\alpha$ generates the ring $\mathbb{Z}[\alpha]$, the lattice $I$ is an ideal if and only if $\alpha I \subset I$.
Hence $I$ is an ideal if and only if the matrix is $B A B^{-1}$ is integral, where $A$ is the matrix by which $\alpha$ acts on $\mathbb{Z}[\alpha]$ with respect to the basis $\{\alpha^{d-1}, \dots, 1\}$.
Explicitly, using the coefficients from (\ref{eq:Fdef}), we have $A = (a_{ij})$ with
\begin{equation}
  \label{eq:aijdef}
  a_{ij} =
  \begin{cases}
     - a_j & \mathrm{if\ } i = 1 \\
    1 & \mathrm{if\ } 2\leq i \leq d, j = i-1 \\
    0 & \mathrm{otherwise},
  \end{cases}
\end{equation}

Letting $B^{-1} = (b_{ij}')$, we observe that for $2\leq i \leq d$, the $(i,j)$ entry of $BAB^{-1}$ is
\begin{equation}
  \label{eq:ijentry}
  \sum_{i\leq k \leq j+1} b_{ik}b_{(k-1)j}',
\end{equation}
where we set $b_{i(d+1)} = 0$ for convenience.
In particular, the $(i,j)$ entry is $0$ if $j\leq i-2$ and the $(i, i-1)$ entry is simply $b_{ii}b_{(i-1)(i-1)}' = \frac{b_{ii}}{b_{(i-1)(i-1)}}$.
From this we see that for $I$ to be an ideal, it is necessary to have
\begin{equation}
  \label{eq:midef}
  b_{dd} = m_db_{(d-1)(d-1)} = m_dm_{d-1}b_{(d-2)(d-2)} = \cdots = \prod_{1\leq i \leq d} m_{i}. 
\end{equation}
The diagonal entries of $BAB^{-1}$ are not much more difficult to compute due to the fact that $b_{(i-1)i}' = - \frac{b_{(i-1)i)}}{b_{ii}b_{(i-1)(i-1)}}$.
With this, the $(i,i)$ entry, $2\leq i \leq d$, is
\begin{equation}
  \label{eq:diag}
  \frac{b_{i(i+1)}}{b_{ii}} - \frac{b_{(i-1)i}}{b_{(i-1)(i-1)}}. 
\end{equation}
Applied with $i=d$, we see that for $I$ to be an ideal, it is necessary that $b_{(d-1)d} = c_{(d-1)d}b_{(d-1)(d-1)}$ for some integer $c_{(d-1)d}$.
Continuing inductively with $i = d-1, d-2, \dots, 2$, we see that for all $2\leq i \leq d$, $b_{(i-1)i} = c_{(i-1)i} b_{(i-1)(i-1)}$ is necessary.

As mentioned in the introduction, continuing this analysis, trying to obtain exactly the necessary and sufficient conditions for $I$ to be an ideal, is a bit unwieldy here in such a general setting.
We instead focus on a special case in which the calculations simplify significantly, and proposition \ref{proposition:invariantfactors} proved below characterizes this special case that we consider.
We however do succeed without such simplifications in the cubic case, $d = 3$, resulting theorem \ref{theorem:cubiccorrespondence}, which we prove in section \ref{sec:cubiccorrespondence} below. 

\subsection{Proof of proposition \ref{proposition:invariantfactors}}
\label{sec:generalinvariantfactors}

Our method is to prove that $b_{(i-j)(i-j)}$ divides $b_{(i-j)i}$ by first inducting on $i = d, d-1, \dots, j+1$, in this order, and then on $j$; we have already handled the case $j =1$ and arbitrary $i$ above.
Let $j > 1$ and assume the divisibility condition for all smaller $j$ and arbitrary $i$.
The base case for inducting on $i$ is $i=d$, and to prove the divisibility here we consider the $(d-j + 1,d)$ entry of $BAB^{-1}$, which is
  \begin{equation}
    \label{eq:basecase}
    \sum_{d-j+1 \leq k \leq d+1} b_{(d-j+1)k}b_{(k-1)d}'. 
  \end{equation}
Since
\begin{equation}
  \label{eq:bprime1}
  b_{(k-1)d}' = (-1)^{d + k +1} \prod_{k-1 \leq l \leq d} b_{ll}^{-1} \det (b_{rs})_{\substack{k-1 \leq r \leq d -1 \\ \max\{k,r\} \leq s \leq d}},
\end{equation}
using the convention that the determinant of a $0\times 0$ matrix is $1$, we apply the induction hypothesis to the $b_{rs}$ to see that for $k > d-j +1$, $b_{(k-1)d}'$ is a fraction with denominator $b_{dd}$.
For $k=d - j +1$, we perform a co-factor expansion along the top row of the determinant in (\ref{eq:bprime1}), noting that for $s < d$ we can apply the induction hypothesis to see that
\begin{equation}
  \label{eq:bprime2}
  b_{(d-j)d}' = \frac{\mathrm{integer}}{b_{dd}} \pm \frac{b_{(d-j)d}}{b_{dd}b_{(d-j)(d-j)}}. 
\end{equation}
Note that we have used the fact that $B$ is upper-triangular to compute the co-factor associated with $r = d-j+1$, $s = d$. 
Putting these facts into (\ref{eq:basecase}) and applying both the inductive hypothesis for $k<d+1$ to write $b_{(d-j+1)k} = c_{(d -j +1)k}b_{(d-j+1)(d-j+1)}$ and also the previously noted $b_{dd} = b_{(d-j+1)(d-j+1)}\prod_{1\leq l \leq j-1}m_{d-l+1}$, we see that the $(d-j+1,d)$ entry of $BAB^{-1}$ has the form
\begin{equation}
  \label{eq:j1entry}
  \frac{\mathrm{integer}}{\prod_{1\leq l \leq j -1}m_{d-l+1}} \pm \frac{b_{(d-j)d}}{b_{(d-j)(d-j)}\prod_{l=1}^{j-1}m_{d-l+1}}.
\end{equation}
From this it is clearly necessary for $b_{(d-j)(d-j)}$ to divide $b_{(d-j)d}$, thus proving the base case for this induction.

The general case for the induction on $i = d-1, \dots, j+1$ follows similarly.
We consider now the $(i-j+1, i)$ entry of $BAB^{-1}$, which is
\begin{equation}
  \label{eq:induction}
  \sum_{i-j+1 \leq k \leq i+1} b_{(i-j+1)k}b_{(k-1)i}'. 
\end{equation}
Here we have
\begin{equation}
  \label{eq:bprime3}
  b_{(k-1)i}' = (-1)^{i+k+1} \prod_{k-1\leq l \leq i} b_{ll}^{-1} \det (b_{rs})_{\substack{k-1 \leq r \leq i-1 \\ \max\{k,r\}\leq s \leq i}},
\end{equation}
where we again use the convention that the determinant of a $0\times 0$ matrix is $1$.
For all except the first term of (\ref{eq:induction}), i.e. $i-j +1 < k \leq i+1$, we can apply the $j$ inductive hypothesis to see that each $b_{(k+1)(i-j)}'$ is a fraction with denominator $b_{ii}$.
And as before, for $k = i-j + 1$, we perform a co-factor expansion of the determinant in (\ref{eq:bprime3}) along the top row, applying the $j$ induction hypothesis for $s < i$, to see that
\begin{equation}
  \label{eq:bprime4}
  b_{(i-j)i}' = \frac{\mathrm{integer}}{b_{ii}} \pm \frac{b_{(i-j)i}}{b_{ii}b_{(i-j)(i-j)}}.
\end{equation}
To use these expressions in (\ref{eq:induction}), we note that for $i-j+1 \leq k \leq i$, i.e. all be the last term, we apply the $j$ induction hypothesis to see that $b_{(i-j+1)k} = c_{(i-j+1)k}b_{(i-j+1)(i-j+1)}$.
And for the last term $k = i+1$, we apply the $i$ induction hypothesis to see the same.
Using that $b_{ii} = b_{(i-j+1)(i-j+1)}\prod_{1\leq l \leq j-1} m_{i - l + 1}$, we now see that the $(i-j+1,i)$ entry of $BAB^{-1}$ has the form
\begin{equation}
  \label{eq:?entry}
  \frac{\mathrm{integer}}{\prod_{1\leq l \leq j-1} m_{i - l + 1}} \pm \frac{b_{(i-j)i}}{b_{(i-j)(i-j)}\prod_{1\leq l \leq j-1} m_{i - l + 1}}.
\end{equation}
This clearly shows that it is necessary for $b_{(i-j)(i-j)}$ to divide $b_{(i-j)i}$, thus finishing the induction.

To finish the proof of proposition \ref{proposition:invariantfactors} we observe that the divisibility conditions on the entries of $B$ show that $B$ can be brought into diagonal form, with each diagonal entry dividing the next, simply by multiplying on the right by an upper-triangular matrix in $\mathrm{SL}(d, \mathbb{Z})$.
This shows that the diagonal entries of $B$ are also the diagonal entries of the basis of the ideal $I$ when written in Smith normal form.
Hence the diagonal entries of $B$ are the invariant factors of $\mathbb{Z}[\alpha] / I$ as required.

\subsection{Proof of theorem \ref{theorem:correspondence}}
\label{sec:correspondenceproof}

As mentioned previously, we now make an assumption on the lattice $I$ in order to simplify calculations.
The assumption we make is that the quotient $\mathbb{Z}[\alpha] / I$ is (additively) cyclic.
If $I$ is an ideal, this assumption, via proposition \ref{proposition:invariantfactors}, implies that $m_j = 1$ except for $j=d$, where the $m_j$ are defined by (\ref{eq:midef}).
Let us set $m_d = m$.
By our assumption that $B$ is in Hermite normal form, specifically that $0\leq b_{ij} < b_{jj}$ for all $ i < j$, we also see that the cyclicity assumption implies that all the off-diagonal entries in $B$ are $0$ outside of the last column.

Having this assumption, we observe first that
\begin{equation}
  \label{eq:bprime5}
  b_{ij}' =
  \begin{cases}
    1 & \mathrm{if\ } i = j < d \\
    \frac{1}{m} & \mathrm{if\ } i = j = d \\
    -\frac{b_{id}}{m} & \mathrm{if\ } i< d, j=d \\
    0 & \mathrm{otherwise}.
  \end{cases}
\end{equation}
Now, for $2 \leq i \leq d$, the $(i,j)$ entry of $BAB^{-1}$ will be
\begin{equation}
  \label{eq:ijentry2}
  \begin{cases}
    - b_{(d-1)d} & \mathrm{if\ } i=j=d \\
    -\frac{1}{m}(b_{id}b_{(d-1)d} + b_{(i-1)d}) & \mathrm{if\ } i < d, j=d \\
    b_{id} & \mathrm{if\ } j=d-1 \\
    1 & \mathrm{if\ } i = j + 1 < d \\
    0 & \mathrm{otherwise}.
  \end{cases}
\end{equation}
Only the second case of the above gives an integrality condition, which, setting $b_{(d-1)d} = -\mu$, is satisfied if and only if
\begin{equation}
  \label{eq:bi1def}
  b_{id} \equiv -\mu^{d-i} \pmod m
\end{equation}
for all $1 \leq i \leq d-1$.

It remains to analyze the integrality conditions arising from the top row of $BAB^{-1}$.
A relatively quick calculation shows that the $(1,j)$ entry of $BAB^{-1}$ is
\begin{equation}
  \label{eq:1jentry}
  \sum_{1\leq l \leq j} a_{1l} b_{lj}' + b_{1d}b_{(d-1)j}' \equiv   
  \begin{cases}
    -a_j & \mathrm{if\ } j < d - 1 \\
    -a_{d-1} - \mu^{d-1} & \mathrm{if\ } j = d-1 \\
    - \frac{1}{m}\left(\mu^d + \sum_{1\leq l\leq d} a_l \mu^{d - l}\right) & \mathrm{if\ } j =d,
  \end{cases}
\end{equation}
modulo $1$, recalling (\ref{eq:Fdef}), (\ref{eq:aijdef}), (\ref{eq:bprime5}), and (\ref{eq:bi1def}).
Hence the integrality condition on $BAB^{-1}$ is satisfied if and only if $\mu$ is a root of the polynomial congruence
\begin{equation}
  \label{eq:polycongruence}
  \mu^d  +  a_1 \mu^{d-1} + \cdots + a_d \equiv 0 \pmod m, 
\end{equation}
and theorem \ref{theorem:correspondence} follows.

\subsection{Proof of theorem \ref{theorem:cubiccorrespondence}}
\label{sec:cubiccorrespondence}

In view of proposition \ref{proposition:invariantfactors}, we see that the entry $b_{11}$ of $B$ divides all other entries of $B$, $B$ as in (\ref{eq:hnfbasis}).
It follows that when considering the conditions for $BAB^{-1}$ to be an integral matrix, $b_{11}$ does not make an appearance.
We therefore may as well assume $b_{11} = 1$, which corresponds to assuming that $I$ is not divisible by any rational integers, as indicated in the introduction.
In the case $d = 3$ we therefore write the matrix $B$ as
\begin{equation}
  \label{eq:hnfbasis1}
  B =
  \begin{pmatrix}
    1 & \mu_1 + a_1 & \lambda \\
    0 & m_1 & -\mu_2 m_1 \\
    0 & 0 & m_1m_2
  \end{pmatrix}
  .
\end{equation}
Hence $BAB^{-1}$ is given by
\begin{equation}
  \label{eq:BABinverse}
  \begin{pmatrix}
    \mu_1 & \frac{1}{m_1}(-\mu_1^2 - a_1\mu_1 - a_2 + \lambda) & \frac{1}{m_1m_2}( -\mu_1^2 \mu_2 - a_1\mu_1\mu_2 - a_2\mu_2 - a_3 + (\mu_2 - \mu_1)\lambda) \\
    m_1 & -\mu_1 - \mu_2 - a_1 & \frac{1}{m_2} (-\mu_1\mu_2 - a_1 \mu_2 - \mu_2^2 - \lambda) \\
    0 & m_2 & \mu_2
  \end{pmatrix}
  .
\end{equation}

From the $(1,2)$ and $(2,3)$ entries of (\ref{eq:BABinverse}), we see that for $I$ to be an ideal, it is necessary that $\lambda$ satisfy the congruences
\begin{equation}
  \label{eq:lambdacongruence5}
  \begin{split}
    \lambda & \equiv \mu_1^2 + a_1 \mu_1 + a_2 \pmod {m_1} \\
    \lambda & \equiv -\mu_2^2 - \mu_1\mu_2 - a_1\mu_2 \pmod {m_2}.
  \end{split}
\end{equation}
In order for the congruences (\ref{eq:lambdacongruence5})to have a solution, it is necessary that
\begin{equation}
  \label{eq:gcdcongruence}
  \mu_1^2 + \mu_1\mu_2 + \mu_2^2 + a_1 \mu_1 + a_1 \mu_2 + a_2 \equiv 0 \pmod {\gcd (m_1, m_2)}.
\end{equation}
Assuming that (\ref{eq:gcdcongruence}) holds, $\lambda$ will be given by
\begin{flalign}
  \label{eq:lambdadef}
  \lambda & \equiv (\mu_1^2 + a_1\mu_1 + a_2) \frac{\overline{m}_2 m_2}{\gcd(m_1,m_2)} - (\mu_2^2 + \mu_1\mu_2 + a_1\mu_2) \frac{\overline{m}_1 m_1}{\gcd(m_1, m_2)} \\ \nonumber
  & \quad + \kappa \frac{m_1m_2}{\gcd(m_1,m_2)} \pmod {m_1m_2}, 
\end{flalign}
where $\kappa \pmod {\gcd(m_1, m_2)}$ is to be determined and $\overline{m}_1$, $\overline{m}_2$ are defined by
\begin{equation}
  \label{eq:mbardef}
  \frac{\overline{m}_1m_1}{\gcd(m_1,m_2)} + \frac{\overline{m}_2 m_2}{\gcd(m_1,m_2)} = 1.
\end{equation}
We note that such $\overline{m}_1$ and $\overline{m}_2$ are not unique, but all solutions to (\ref{eq:mbardef}) can be obtained by a given solution $\overline{m}_1$, $\overline{m}_2$ by respectively adding, subtracting $\frac{lm_2}{\gcd (m_1,m_2)}$, $\frac{lm_1}{\gcd(m_1,m_2)}$, where $l$ is an arbitrary integer.
We note that in (\ref{eq:lambdadef}), applying these changes to $\overline{m}_1$, $\overline{m}_2$ has the effect of replacing $\kappa$ with
\begin{equation}
  \label{eq:kappachange}
  \kappa - l \frac{\mu_1^2 + \mu_1\mu_2 + \mu_2^2 + a_1\mu_1 + a_1 \mu_2 + a_2}{\gcd( m_1, m_2)}.
\end{equation}

Putting (\ref{eq:lambdadef}) into the $(1,3)$ entry of (\ref{eq:BABinverse}) and making use of (\ref{eq:mbardef}), we find that for $I$ to be an ideal, it is necessary that
\begin{flalign}
  \label{eq:m1m2congruence}
  & (\mu_1^3 + a_1\mu_1^2 + a_2\mu_1 + a_3) \frac{\overline{m}_2m_2}{\gcd(m_1,m_2)} + (\mu_2^3 + a_1 \mu_2^2 + a_2\mu_2 + a_3 ) \frac{\overline{m}_1m_1}{\gcd(m_1,m_2)} \\ \nonumber
  & \quad + (\mu_1 - \mu_2) \kappa \frac{m_1m_2}{\gcd(m_1, m_2)} \equiv 0 \pmod {m_1m_2}. 
\end{flalign}
Using (\ref{eq:mbardef}), we write the left side of (\ref{eq:m1m2congruence}) either as
\begin{equation}
  \label{eq:m1congruence}
  \mu_1^3 + a_1\mu_1^2 + a_2\mu_1 + a_3 + (\mu_1 - \mu_2)\left( \kappa - \frac{\mu_1^2 + \mu_1\mu_2 + \mu_2^2 + a_1\mu_1 + a_1 \mu_2 + a_2}{\gcd( m_1, m_2)}\right) \overline{m}_1 m_1
\end{equation}
or
\begin{equation}
  \label{eq:m2congruence}
  \mu_2^3 + a_1 \mu_2^2 + a_2\mu_2 + a_3 + (\mu_1 - \mu_2)\left( \kappa + \frac{\mu_1^2 + \mu_1\mu_2 + \mu_2^2 + a_1\mu_1 + a_1 \mu_2 + a_2}{\gcd( m_1, m_2)}\right) \overline{m}_2 m_2.
\end{equation}
We see that (\ref{eq:m1congruence}), (\ref{eq:m2congruence}) respectively imply that
\begin{equation}
  \label{eq:mu1mu2congruence}
  F(\mu_1) \equiv 0 \pmod {m_1}, \quad F(\mu_2)\equiv 0 \pmod {m_2}
\end{equation}
are both necessary for $I$ to be an ideal.

Assuming the validity of (\ref{eq:mu1mu2congruence}), we multiply (\ref{eq:m1m2congruence}) by $\frac{\gcd(m_1, m_2)}{m_1m_2}$ to obtain the necessary condition
\begin{equation}
  \label{eq:gcdcongruence1}
  \frac{F(\mu_1)}{m_1} \overline{m}_2 + \frac{F(\mu_2)}{m_2} \overline{m}_1 + (\mu_1 - \mu_2)\kappa \equiv 0 \pmod {\gcd(m_1,m_2)}. 
\end{equation}
We note that different choices of $\overline{m}_1$, $\overline{m}_2$ leave the condition (\ref{eq:gcdcongruence1}) invariant after applying the corresponding change to $\kappa$, (\ref{eq:kappachange}).

We observe that if $\gcd(m_1, m_2, \mu_1 - \mu_2) = 1$, then there will be a unique $\kappa$ satisfying (\ref{eq:gcdcongruence1}).
To address the converse of this statement, we note that if there is a prime $p$ dividing all of $m_1$, $m_2$, and $\mu_1 - \mu_2$, then 
\begin{equation}
  \label{eq:pcongruence}
  \begin{split}
    \mu_1 & \equiv \mu_2 \pmod p, \\
    F(\mu_1) & \equiv 0 \pmod p, \\
    0 & \equiv \mu_1^2 + \mu_1\mu_2 + \mu_2^2 + a_1 \mu_1 + a_1 \mu_2 + a_2 \equiv F'(\mu_1) \pmod p.
  \end{split}
\end{equation}
The second and third lines of (\ref{eq:pcongruence}) show that in this case $p$ must divide the discriminant of $F$.

This is enough to prove the first part of theorem \ref{theorem:cubiccorrespondence}.
To show the converse part, we assume that $\mu_1 \pmod {m_1}$ and $\mu_2 \pmod {m_2}$ satisfy (\ref{eq:mu1mu2congruence}) and $\gcd(m_1, m_2, \mu_1 - \mu_2) = 1$.
We have
\begin{equation}
  \label{eq:gcdcongruence2}
  0 \equiv F(\mu_1) - F(\mu_2) = (\mu_1 - \mu_2)(\mu_1^2 + \mu_1\mu_2 + \mu_2^2 + a_1\mu_1 + a_1\mu_2 + a_2) \pmod {\gcd(m_1, m_2)},
\end{equation}
and so $\gcd(m_1, m_2, \mu_1 - \mu_2) = 1$ implies that $\mu_1$, $\mu_2$ satisfy (\ref{eq:gcdcongruence}).
It then follows that (\ref{eq:gcdcongruence1}) and (\ref{eq:lambdadef}) give a uniquely defined $\lambda \pmod {m_1m_2}$ such that the matrix (\ref{eq:BABinverse}) has integral entries, thus proving the converse part of theorem \ref{theorem:cubiccorrespondence}.

\section{Parameterizing the roots}
\label{sec:parameterization}

We now turn to the proofs of the theorems parameterizing the roots of the polynomial congruence, theorems \ref{theorem:parameterization}, \ref{theorem:parameterization1}, \ref{theorem:cubicparameterization}, and \ref{theorem:cubicparameterization1}. 
These proofs are all incarnations of the same idea: that after fixing representatives of the narrow class group, as done in the introduction, every invertible ideal inherits a natural basis.
This basis is obtained in the following way.
Suppose that an invertible ideal $I \subset \mathbb{Z}[\alpha]$ is in the ideal class represented by the integral ideal $I_l$, so there is an element $\xi \in I_l^{-1}$ such that $I = \xi I_l$.
After fixing a basis $\{\beta_{l1}, \dots, \beta_{ld}\}$ of $I_l$ as in the introduction, we see that $\{\xi \beta_{l1}, \dots, \xi\beta_{ld}\}$ forms a natural basis for $I$.

Theorems \ref{theorem:parameterization} and \ref{theorem:cubicparameterization}, or at least the first part of these theorems, are almost proved once one makes the observation that given two bases for an ideal $I$, the one from the previous paragraph and the one from theorems \ref{theorem:correspondence} and \ref{theorem:cubiccorrespondence}, then they must be related by a matrix $\gamma \in \mathrm{GL}(d,\mathbb{Z})$.
The remaining details and the other parts of these theorems are verified in sections \ref{sec:parameterization1} and \ref{sec:cubicparameterization1} below.
The proofs of theorems \ref{theorem:parameterization1} and \ref{theorem:cubicparameterization1} are not much more difficult.
The main work, which is also carried out in sections \ref{sec:parameterization1} and \ref{sec:cubicparameterization1}, is to explicitly compute this change-of-basis matrix $\gamma$ in terms of the $\xi$ referred to in the previous paragraph.

\subsection{Proof of theorems \ref{theorem:parameterization} and \ref{theorem:parameterization1}}
\label{sec:parameterization1}

We begin with the proof of theorem \ref{theorem:parameterization}.
As mentioned in the statement of this theorem, we only consider the roots $\mu \pmod m$ that correspond, via theorem \ref{theorem:correspondence}, to invertible ideals.
It would be nice to have a more concrete characterization of the $\mu \pmod m$ in terms of $m$ and $\mu$ themselves, but we unfortunately have not yet been able to do this in general.
We note however that it is sufficient to have $m$ coprime to the discriminant $D$ of $F$. 

Let $\mu \pmod m$ correspond to such an ideal, say $I$, and let $I_l$ be the fixed representative of the narrow ideal class of $I$.
We then have that $I = \xi I_l$ for some $\xi \in I_l^{-1}$ and $\xi \in C_{r_1,r_2}^+$, as we are considering here the narrow classes.
As mentioned in the introduction to this section, this $\xi$ gives a natural basis for $I$, namely $\{\xi\beta_{l1}, \dots, \xi\beta_{ld}\}$.
Written in terms of the embeddings, i.e. as a vector in $C_{r_1,r_2}$, we have
\begin{equation}
  \label{eq:parameterization3}
  \begin{pmatrix}
    \xi \beta_{l1} \\
    \vdots \\
    \xi \beta_{ld}
  \end{pmatrix}
  = \mathfrak{B}_l
  \begin{pmatrix}
    \xi^{(1)} & \cdots & 0 \\
    \vdots & \ddots & \vdots \\
    0 & \cdots & \xi^{(d)}
  \end{pmatrix}
  .
\end{equation}

We now have two bases for the ideal $I$, the basis (\ref{eq:parameterization3}) and the basis from theorem \ref{theorem:correspondence}.
These bases must be related by an element of $\mathrm{GL}(d,\mathbb{Z})$, and hence there is $\gamma \in \mathrm{GL}(d,\mathbb{Z})$ such that
\begin{equation}
  \label{eq:changebasis}
  \begin{pmatrix}
    1 & \cdots & 0 & -\mu^{d-1} \\
    \vdots & \ddots & \vdots & \vdots \\
    0 & \cdots & 1 & -\mu \\
    0 & \cdots & 0 & m 
  \end{pmatrix}
  \mathfrak{B}_1 = \gamma \mathfrak{B}_l
  \begin{pmatrix}
    \xi^{(1)} & \cdots & 0 \\
    \vdots & \ddots & \vdots \\
    0 & \cdots & \xi^{(d)}
  \end{pmatrix}
  .
\end{equation}
In fact, recalling our stipulations that $m > 0$, $\mathrm{sign} \det \mathfrak{B}_l = \mathrm{sign}\det \mathfrak{B}_1$ and $\xi \in C_{r_1,r_2}^+$, we have $\gamma \in \Gamma = \mathrm{SL}(d,\mathbb{Z})$.

To finish the first part of theorem \ref{theorem:parameterization}, we only need to verify the uniqueness of this $\gamma$ modulo the left action of $U$ and the right action of $\Gamma_l$.
Regarding the action of $u\in U$, we see from (\ref{eq:changebasis}) that replacing $\gamma$ by $u\gamma$ corresponds to adding multiples of $m$ to the $\mu^j$.
Hence different representatives $\gamma $ of $U\gamma \in U \backslash \Gamma$ simply correspond to different representatives of the $\mu^j$ modulo $m$.

Regarding the action of $\gamma_1 \in \Gamma_l$, we see that since
\begin{equation}
  \label{eq:xi1def}
  \gamma_1 = \mathfrak{B}_l
  \begin{pmatrix}
    \xi_1^{(1)} & \cdots & 0 \\
    \vdots & \ddots & \vdots \\
    0 & \cdots & \xi_1^{(d)}
  \end{pmatrix}
  \mathfrak{B}_l^{-1}, \quad \xi_1 \in C_{r_1,r_2}^+,
\end{equation}
with $\xi_1$ a unit in $\mathbb{Z}[\alpha]$, replacing $\gamma$ with $\gamma\gamma_1$ corresponds to replacing $\xi$ by $\xi \xi_1^{-1}$.
It follows that different representatives $\gamma $ of $\gamma \Gamma_l \in \Gamma / \Gamma_l$ correspond to the same $m$ and $\mu \pmod m$, just different $\xi \in C_{r_1,r_2}^+$.

We turn now to the converse part of theorem \ref{theorem:parameterization}.
We suppose that we have $\gamma \in \Gamma$ and $\xi \in C_{r_1,r_2}^+$ such that
\begin{equation}
  \label{eq:converseparameterization2}
  \gamma \mathfrak{B}_l
  \begin{pmatrix}
    \xi_1^{(1)} & \cdots & 0 \\
    \vdots & \ddots & \vdots \\
    0 & \cdots & \xi_1^{(d)}
  \end{pmatrix}
  =
  \begin{pmatrix}
    1 & \cdots & 0 & * \\
    \vdots & \ddots & \vdots & \vdots \\
    0 & \cdots & 1 & * \\
    0 & \cdots & 0 & *
  \end{pmatrix}
  \mathfrak{B}_1.
\end{equation}
We first observe that if $\xi \in I_l^{-1}$, then the left side of (\ref{eq:converseparameterization2}) would be basis for an ideal $I \subset \mathbb{Z}[\alpha]$.
That this ideal $I$ has a basis with the form on the right of (\ref{eq:converseparameterization2}) implies via proposition \ref{proposition:invariantfactors} that the quotient $\mathbb{Z}[\alpha] / I$ is cyclic.
It then follows from theorem \ref{theorem:correspondence} that in fact
\begin{equation}
  \label{eq:converseparameterization3}
  \gamma \mathfrak{B}_l
  \begin{pmatrix}
    \xi_1^{(1)} & \cdots & 0 \\
    \vdots & \ddots & \vdots \\
    0 & \cdots & \xi_1^{(d)}
  \end{pmatrix}
  =
  \begin{pmatrix}
    1 & \cdots & 0 & -\mu^{d-1} \\
    \vdots & \ddots & \vdots & \vdots \\
    0 & \cdots & 1 & -\mu \\
    0 & \cdots & 0 & m
  \end{pmatrix}
  \mathfrak{B}_1
\end{equation}
for unique $m > 0$ (since $\det \gamma = +1$) and $\mu \pmod m$.

It remains to show that if we have (\ref{eq:converseparameterization2}), then necessarily $\xi \in I_l^{-1}$.
To see this we make use of the notation established in the introduction.
We recall that $\{ \overline{\beta}_{l1}, \dots, \overline{\beta}_{ld} \}$ is a fixed basis of $I_l^{-1}$, the integers $b_{ijkl}$ are given by
\begin{equation}
  \label{eq:bijkldef1}
  \overline{\beta}_{li} \beta_{lj} = \sum_{1\leq k \leq d} b_{ijkl} \alpha^{d-k},
\end{equation}
and the matrices $B_{li}$ have entries $b_{ijkl}$, $j$ indexing the rows and $k$ indexing the columns.

We note that $\{\overline{\beta}_{l1}, \dots, \overline{\beta}_{ld} \}$ forms an $\mathbb{R}$-basis of $C_{r_1,r_2}$, and as such, there are $c_i \in \mathbb{R}$ such that
\begin{equation}
  \label{eq:cidef}
  \xi = \sum_{1\leq i \leq d} c_i \overline{\beta}_{li},
\end{equation}
where $\xi$ is as in (\ref{eq:converseparameterization}).
We now observe that $\xi \in I_l^{-1}$ if and only if all $c_i \in \mathbb{Z}$, $1\leq i \leq d$.

To verify this, we construct an integral matrix $B_l$ by taking the first column of $B_{li}$ as the $i$th column of $B_l$.
In view of the definition of the $B_{li}$, it is easily seen from (\ref{eq:converseparameterization3}) that we have
\begin{equation}
  \label{eq:Bliaction}
  \mathfrak{B}_l
  \begin{pmatrix}
    \xi^{(1)} & \cdots & 0 \\
    \vdots & \ddots & \vdots \\
    0 & \cdots & \xi^{(d)}
  \end{pmatrix}
  = \left( \sum_{1\leq i \leq d} c_i B_{li} \right)\mathfrak{B}_1,
\end{equation}
and hence
\begin{equation}
  \label{eq:Blaction}
  B_l
  \begin{pmatrix}
    c_1 \\
    c_2 \\
    \vdots \\
    c_d 
  \end{pmatrix}
  = \gamma^{-1}
  \begin{pmatrix}
    1 \\
    0 \\
    \vdots \\
    0
  \end{pmatrix}
  .
\end{equation}
Since $\gamma^{-1}$ is an integral matrix, the integrality of the $c_i$, and hence what remains to be proved of theorem \ref{theorem:parameterization}, is implied by the following lemma.
\begin{lemma}
  \label{lemma:B1}
  We have $B_l \in \mathrm{GL}(d,\mathbb{Z})$.
\end{lemma}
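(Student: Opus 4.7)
The plan is to interpret $B_l$ as the matrix of a naturally unimodular bilinear pairing. Let $L\colon \mathbb{Q}(\alpha) \to \mathbb{Q}$ be the $\mathbb{Q}$-linear functional extracting the coefficient of $\alpha^{d-1}$ in the basis $\{1,\alpha,\ldots,\alpha^{d-1}\}$, and consider the bilinear form $(x,y) \mapsto L(xy)$. The definition (\ref{eq:bijkldef}) immediately gives $(B_l)_{ji} = L(\overline{\beta}_{li}\beta_{lj})$, so $B_l$ is precisely the Gram matrix of this pairing restricted to $I_l^{-1} \times I_l$ in the chosen bases. Invertibility of $I_l$ yields $\overline{\beta}_{li}\beta_{lj} \in I_l^{-1}I_l = \mathbb{Z}[\alpha]$, so the entries of $B_l$ are integers; what remains is to show $|\det B_l| = 1$.

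I next compute the Gram matrix $G$ of $L(xy)$ in the standard basis $\{\alpha^{d-1},\ldots,1\}$ of $\mathbb{Z}[\alpha]$: it is the Hankel matrix with $G_{jk} = L(\alpha^{2d-j-k})$. Since $L(\alpha^n) = 0$ for $0 \leq n < d-1$ and $L(\alpha^{d-1}) = 1$, the matrix $G$ carries $1$'s on the antidiagonal $j+k = d+1$ and zeros below it, the entries above coming from reducing higher powers of $\alpha$ modulo $F$ via (\ref{eq:Fdef}). Reversing the order of rows turns $G$ into an upper-triangular matrix with $1$'s on the diagonal, so $\det G = \pm 1$.

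Finally, let $P$ and $Q$ be the rational $d \times d$ matrices expressing $\{\beta_{lj}\}$ and $\{\overline{\beta}_{li}\}$ in the basis $\{\alpha^{d-k}\}$. The change-of-basis formula for bilinear forms gives $B_l = PGQ^{\top}$, up to a transpose coming from the row/column convention in (\ref{eq:bijkldef}), so $|\det B_l| = |\det P|\cdot|\det G|\cdot|\det Q|$. Since $I_l \subset \mathbb{Z}[\alpha]$ with index $N(I_l)$, we have $|\det P| = N(I_l)$; invertibility of $I_l$ guarantees $\mathbb{Z}[\alpha] \subset I_l^{-1}$ with the same index $N(I_l)$, giving $|\det Q| = 1/N(I_l)$. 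Combining, $|\det B_l| = 1$, as required. The essential conceptual point, and the sole place invertibility of $I_l$ is used, is the cancellation $|\det P\det Q| = 1$: equivalently, the pairing $L(xy)$ realizes $I_l^{-1}$ as the $\mathbb{Z}$-linear dual of $I_l$ inside $\mathbb{Q}(\alpha)$. The remaining Hankel computation for $G$ is routine, so no further obstacle arises.
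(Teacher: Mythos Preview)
Your proof is correct and takes a genuinely different route from the paper's. The paper argues in two stages: first a linear-independence contradiction to show $\det B_l \neq 0$, and then a Smith-normal-form argument showing that $B_l$ sends primitive vectors to primitive vectors. The latter step works by observing that for primitive $(c_i')$ the ideal $\xi' I_l$ (with $\xi' = \sum c_i' \overline{\beta}_{li}$) has no rational integer divisor, and then invoking Proposition~\ref{proposition:invariantfactors} to conclude that the gcd of the first column of $\sum c_i' B_{li}$, which is $B_l(c_i')$, equals~$1$.

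Your approach bypasses all of this by recognising $B_l$ as the Gram matrix of the pairing $(x,y)\mapsto L(xy)$ between $I_l$ and $I_l^{-1}$, with $L$ the $\alpha^{d-1}$-coefficient functional. The determinant then factors as $|\det P|\cdot|\det G|\cdot|\det Q|$, and the key cancellation $|\det P\det Q| = N(I_l)\cdot N(I_l)^{-1} = 1$ follows directly from invertibility of $I_l$, while the Hankel computation for $G$ is elementary. This is shorter and more conceptual, essentially the standard duality argument that $I_l^{-1}$ is the $\mathbb{Z}$-dual of $I_l$ under the trace-like pairing $L(xy)$; in fact your $G$ is exactly the matrix appearing in~(\ref{eq:different}) in the paper's later proof of Lemma~\ref{lemma:Blprime}, so the two approaches are closer than they first appear. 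The paper's argument, by contrast, leans on the structure theory already developed (Proposition~\ref{proposition:invariantfactors}) and gives a more hands-on picture of why primitivity is preserved, which is reused in the proof of Lemma~\ref{lemma:Blprime}.
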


\begin{proof}
  We first show that $\det B_l \neq 0$.
  If it was the case that $\det B_l = 0$, then there would be integers $c_i'$, not all $0$, such that
  \begin{equation}
    \label{eq:nullspace}
    B_l
    \begin{pmatrix}
      c_1' \\
      \vdots \\
      c_d' 
    \end{pmatrix}
    =
    \begin{pmatrix}
      0 \\
      \vdots \\
      0 
    \end{pmatrix}
    .
  \end{equation}
  Setting
  \begin{equation}
    \label{eq:xiprimedef}
    \xi' = \sum_{1\leq i \leq d} c_i' \overline{\beta}_{li},
  \end{equation}
  we then have in view of (\ref{eq:Bliaction}) that
  \begin{equation}
    \label{eq:alphaspan}
    \xi' \beta_{li} \in \mathrm{Span} \{ 1, \alpha, \dots, \alpha^{d-2} \}
  \end{equation}
  for all $1\leq i \leq d$.
  Since $I_l^{-1}$ is a rank $d$ lattice in $C_{r_1,r_2}$ and $\mathbb{Q}(\alpha)$ has no zero-divisors, the $c_i'$ not being all $0$ implies that multiplication by $\xi'$ defines an invertible map on $C_{r_1,r_2}$, and so (\ref{eq:alphaspan}) contradicts the fact that the $\beta_{li}$ are linearly independent, i.e. that $I_l$ is a rank $d$ lattice in $C_{r_1,r_s}$.

  We now show that $\det B_l = \pm 1$.
  We first note that if $c_i'$ are integers, then, in view of (\ref{eq:Bliaction}),
  \begin{equation}
    \label{eq:idealbasis}
    \left( \sum_{1\leq i \leq d} c_i' B_{li} \right) \mathfrak{B}_1
  \end{equation}
  is a basis for an integral ideal $I$ in $\mathbb{Z}[\alpha]$.
  Let $B$ be the integral matrix resulting from putting $\sum c_i' B_{li}$ into Hermite normal form.
  Inspecting the first column, we observe that the first entry of $B$, $b_{11}$, can be expressed as
  \begin{equation}
    \label{eq:firstcolumngcd}
    b_{11} = \gcd B_l
    \begin{pmatrix}
      c_1' \\
      \vdots \\
      c_d'
    \end{pmatrix}
    .
  \end{equation}

  On the other hand, proposition \ref{proposition:invariantfactors} implies that $b_{11}$ divides the ideal $I = \xi' I_l^{-1}$, where $\xi' = \sum c_i' \overline{\beta}_{li}$.
  Since we are assuming that the fixed ideals $I_l^{-1}$ are not divisible by any rational integers, we see that $B_l$ maps primitive integral vectors into primitive integral vectors.
  By putting $B_l$ into Smith normal form, we observe that the diagonal matrix of the elementary divisors of $\mathbb{Z}^d / B_l \mathbb{Z}^d$ also has this property, and hence these elementary divisors are all equal to $1$.
  Thus, $\det B_l = \pm 1$ as required.
\end{proof}

We now turn to the proof of theorem \ref{theorem:parameterization1}.
In view of theorem \ref{theorem:parameterization}, we have that
\begin{equation}
  \label{eq:parameterization4}
  \begin{pmatrix}
    1 & \cdots & 0 & -\mu^{d-1} \\
    \vdots & \ddots & \vdots & \vdots \\
    0 & \cdots & 1 & -\mu \\
    0 & \cdots & 0 & m
  \end{pmatrix}
  \mathfrak{B}_1 = \gamma \mathfrak{B}_l
  \begin{pmatrix}
    \xi^{(1)} & \cdots & 0 \\
    \vdots & \ddots & \vdots \\
    0 & \cdots & \xi^{(d)}
  \end{pmatrix}
\end{equation}
connects $m$ and $\mu \pmod m$ with $\gamma \in \Gamma$ and $\xi \in C_{r_1,r_2}^+$.
Moreover, we have that $\xi \in I_l^{-1}$, and so
\begin{equation}
  \label{eq:xici}
  \xi = \sum_{1\leq i \leq d} c_i \overline{\beta}_{li},
\end{equation}
for some integers $c_i$
Conversely, given integers $c_i$, $\xi$ defined by (\ref{eq:xici}) is an element of $I_l^{-1}$, and we stipulate that this $\xi$ is in $\mathcal{D}$, a fixed fundamental domain for the action on $\mathbb{Q}(\alpha) \cap C_{r_1,r_2}^+$ of the totally positive units in $\mathbb{Z}[\alpha]$.

From the definition of the $B_{li}$ and the definition of $C$ in theorem \ref{theorem:parameterization1}, we have
\begin{equation}
  \label{eq:xiaction}
  \mathfrak{B}_{l}
  \begin{pmatrix}
    \xi^{(1)} & \cdots & 0 \\
    \vdots & \ddots & \vdots \\
    0 & \cdots & \xi^{(d)}
  \end{pmatrix}
  = C\mathfrak{B}_{1}.
\end{equation}
Inserting this into (\ref{eq:parameterization4}), we obtain
\begin{equation}
  \label{eq:gammaeq}
  \gamma
  \begin{pmatrix}
    1 & \cdots & 0 & -\mu^{d-1} \\
    \vdots & \ddots & \vdots & \vdots \\
    0 & \cdots & 1 & -\mu \\
    0 & \cdots & 0 & m
  \end{pmatrix}
  = C.
\end{equation}
Examining the first $d-1$ columns of this equation, we see that
\begin{equation}
  \label{eq:gammainverseform}
  \gamma^{-1} =
  \begin{pmatrix}
    c_{11} & \cdots & c_{1(d-1)} & * \\
    \vdots & \ddots & \vdots & \vdots \\
    c_{d1} & \cdots & c_{d(d-1)} & * 
  \end{pmatrix}
  ,
\end{equation}
where the $c_{ij}$ are the entries of $C$.

A $\gamma \in \mathrm{SL}(d,\mathbb{Z})$ satisfying (\ref{eq:gammainverseform}) exists if and only if the determinants of the $d$ minors obtained from the first $d-1$ columns are coprime, i.e. that
\begin{equation}
  \label{eq:coprimeminors}
  \gcd( \det C_{1d}, \dots, \det C_{dd}) = 1
\end{equation}
where $C_{ij}$ is the $(d-1)\times (d-1)$ matrix obtained from $C$ by removing the $i$th row and $j$th column.
We note that in view of proposition \ref{proposition:invariantfactors}, this coprimality condition provides a criterion for the ideal $I = \xi I_l$ to have cyclic quotient in terms of $\xi$.

Assuming (\ref{eq:coprimeminors}), there are integers $u_j$ such that
\begin{equation}
  \label{eq:ujdef}
  \sum_{1\leq j \leq d} (-1)^{j+d} u_j \det C_{jd} = 1,
\end{equation}
i.e.
\begin{equation}
  \label{eq:gammainverseform1}
  \gamma^{-1} =
  \begin{pmatrix}
    c_{11} & \cdots & c_{1(d-1)} & u_1 \\
    \vdots & \ddots & \vdots & \vdots \\
    c_{d1} & \cdots & c_{d(d-1)} & u_d
  \end{pmatrix}
  \in \mathrm{SL}(d,\mathbb{Z}).
\end{equation}
We note that such integers $u_j$ are defined by (\ref{eq:ujdef}) up to multiplying by matrices in $U \subset \mathrm{SL}(d,\mathbb{Z})$ on the right.
That is to say that if $u_j'$ are other integers satisfying (\ref{eq:ujdef}), then the corresponding $\gamma'$ satisfies
\begin{equation}
  \label{eq:gammaprimeeq}
  \gamma (\gamma')^{-1} =
  \begin{pmatrix}
    1 & \cdots & 0 & * \\
    \vdots & \ddots & \vdots & * \\
    0 & \cdots & 1 & * \\
    0 & \cdots & 0 & 1
  \end{pmatrix}
  .
\end{equation}
Returning to the equation (\ref{eq:gammaeq}), this ambiguity in $\gamma$ corresponds exactly to the ambiguity in picking representatives for the $\mu^j$ modulo $m$.
In other words, different choices for the set of integers $u_j$ satisfying (\ref{eq:ujdef}) correspond to different sets of representatives for the $\mu^j$ modulo $m$.

\subsection{Proof of theorems \ref{theorem:cubicparameterization} and \ref{theorem:cubicparameterization1}}
\label{sec:cubicparameterization1}

The proof of theorem \ref{theorem:cubicparameterization} is largely the same as the proof of theorem \ref{theorem:parameterization}.
Given roots $\mu_1 \pmod{m_1}$ and $\mu_2 \pmod {m_2}$ satisfying the conditions of theorem \ref{theorem:cubicparameterization}, there is a corresponding ideal $I$ by theorem \ref{theorem:cubiccorrespondence}.
By assumption, $I$ will be invertible in $\mathbb{Z}[\alpha]$, and so there is $\xi \in C_{r_1,r_2}^+$ such that $I = \xi I_l$ for some $l$.
This gives the natural basis
\begin{equation}
  \label{eq:generatorbasis}
  \begin{pmatrix}
    \xi \beta_{l1} \\
    \xi \beta_{l2} \\
    \xi \beta_{l3} 
  \end{pmatrix}
  = \mathfrak{B}_l
  \begin{pmatrix}
    \xi^{(1)} & 0 & 0 \\
    0 & \xi^{(2)} & 0 \\
    0 & 0 & \xi^{(3)}
  \end{pmatrix}
  .
\end{equation}
Having the bases for $I$, the basis (\ref{eq:generatorbasis}) and the basis given in theorem \ref{theorem:correspondence}, we conclude that there must be a $\gamma \in \mathrm{GL}(3,\mathbb{Z})$ such that
\begin{equation}
  \label{eq:cubicgammaeq}
  \begin{pmatrix}
    1 & \mu_1 + a_1 & * \\
    0 & m_1 & -\mu_2 m_2 \\
    0 & 0 & m_1m_2
  \end{pmatrix}
  \begin{pmatrix}
    (\alpha^{(1)})^2 &  (\alpha^{(2)})^2 & (\alpha^{(3)})^2 \\
    \alpha^{(1)} & \alpha^{(2)} & \alpha^{(3)} \\
    1 & 1 & 1
  \end{pmatrix}
  = \gamma \mathfrak{B}_l
  \begin{pmatrix}
    \xi^{(1)} & 0 & 0 \\
    0 & \xi^{(2)} & 0 \\
    0 & 0 & \xi^{(3)}
  \end{pmatrix}
  ,
\end{equation}
and in fact, since $m_2 > 0$, $\xi \in C_{r_1,r_1}^+$, and
\begin{equation}
  \label{eq:detsigncondition}
  \mathrm{sign} \det
  \begin{pmatrix}
    (\alpha^{(1)})^2 &  (\alpha^{(2)})^2 & (\alpha^{(3)})^2 \\
    \alpha^{(1)} & \alpha^{(2)} & \alpha^{(3)} \\
    1 & 1 & 1
  \end{pmatrix}
  = \mathrm{sign}\det \mathfrak{B}_l,
\end{equation}
we have $\gamma \in \Gamma = \mathrm{SL}(3,\mathbb{Z})$.

We note that multiplying (\ref{eq:cubicgammaeq}) on the left by matrices in $\Gamma_\infty$ does not change the residue classes $\mu_1 \pmod {m_1}$ and $\mu_2 \pmod {m_2}$, and conversely any representative of these classes can be obtained from a given one by multiplying by an appropriate element of $\Gamma_\infty$.
Moreover, an $\xi_1 \in C_{r_1,r_2}^+$ also satisfies $I = \xi_1 I_l$ if and only if $\xi_1 \xi^{-1} $ is a unit in $\mathbb{Z}[\alpha] \cap C_{r_1,r_2}^+$, and so
\begin{equation}
  \label{eq:unitchange}
  \mathfrak{B}_l
  \begin{pmatrix}
    \frac{\xi_1^{(1)}}{\xi^{(1)}} & 0 & 0 \\
    0 & \frac{\xi_1^{(2)}}{\xi^{(2)}} & 0 \\
    0 & 0 & \frac{\xi_1^{(3)}}{\xi^{(3)}} 
  \end{pmatrix}
  \mathfrak{B}_l^{-1} \in \Gamma.
\end{equation}
Hence by the definition of $\Gamma_l$, (\ref{eq:Gammaldef}), different choices of $\xi \in C_{r_1,r_2}^+$ satisfying $I = \xi I_l$ correspond in (\ref{eq:cubicgammaeq}) to different representatives of the coset $\gamma \Gamma_l$.

For the converse part of theorem \ref{theorem:cubicparameterization}, we observe that from the proof of the converse part of theorem \ref{theorem:parameterization1}, in particular lemma \ref{lemma:B1}, we have for $\gamma \in \Gamma$ and $\xi \in C_{r_1,r_2}^+$,
\begin{equation}
  \label{eq:cubicgammaeq1}
  \gamma^{-1}
  \begin{pmatrix}
    1 & * & * \\
    0 & * & * \\
    0 & * & * 
  \end{pmatrix}
  \mathfrak{B}_l = \mathfrak{B}_l
  \begin{pmatrix}
    \xi^{(1)} & 0 & 0 \\
    0 & \xi^{(2)} & 0 \\
    0 & 0 & \xi^{(3)}
  \end{pmatrix}
\end{equation}
implies that $\xi$ is a primitive vector in $I_l^{-1}$.
Accordingly, $I = \xi I_l$ is an ideal in $\mathbb{Z}[\alpha]$ not divisible by any rational integers.
Hence if $\gamma$ in fact satisfies (\ref{eq:conversecubicparameterization}), then theorem \ref{theorem:cubiccorrespondence} implies that  (\ref{eq:conversecubicparameterization1}) holds for some roots unique $\mu_1 \pmod {m_1}$, $\mu_2 \pmod {m_2}$.

We now proceed to the proof of theorem \ref{theorem:cubicparameterization1}.
From the above we have that if $\gamma$ satisfies
\begin{equation}
  \label{eq:cubicgammaeq2}
  \gamma \mathfrak{B}_l
  \begin{pmatrix}
    \xi^{(1)} & 0 & 0 \\
    0 & \xi^{(2)} & 0 \\
    0 & 0 & \xi^{(3)}
  \end{pmatrix}
  =
  \begin{pmatrix}
    1 & * & * \\
    0 & * & * \\
    0 & * & * 
  \end{pmatrix}
  \mathfrak{B}_1
\end{equation}
for some $\xi \in C_{r_1,r_2}^+$ then $\xi \in I_l^{-1}$, where we recall that
\begin{equation}
  \label{eq:B1recall}
  \mathfrak{B}_1 =
  \begin{pmatrix}
    (\alpha^{(1)})^2 & (\alpha^{(2)})^2 & (\alpha^{(3)})^2 \\
    \alpha^{(1)} & \alpha^{(2)} & \alpha^{(3)} \\
    1 & 1 & 1
  \end{pmatrix}
  .
\end{equation}
It follows that if $\gamma$ satisfies (\ref{eq:cubicgammaeq2}) then the first column of $\gamma^{-1}$ is the same as the first entries of $C$, which we recall is defined by
\begin{equation}
  \label{eq:Crecall}
  \mathfrak{B}_l
  \begin{pmatrix}
    \xi^{(1)} & 0 & 0 \\
    0 & \xi^{(2)} & 0 \\
    0 & 0 & \xi^{(3)} 
  \end{pmatrix}
  = C \mathfrak{B}_1
\end{equation}
or
\begin{equation}
  \label{eq:Crecall1}
  C = c_1 B_{l1} + c_2 B_{l2} + c_3 B_{l3}
\end{equation}
where $\xi = c_1 \overline{\beta}_{l1} + c_2 \overline{\beta}_{l2} + c_3 \overline{\beta}_{l3}$.

Now the equation
\begin{equation}
  \label{eq:cubicgammaeq3}
  \gamma C \mathfrak{B}_1 = \gamma \mathfrak{B}_l
  \begin{pmatrix}
    \xi^{(1)} & 0 & 0 \\
    0 & \xi^{(2)} & 0 \\
    0 & 0 & \xi^{(3)}
  \end{pmatrix}
  = 
  \begin{pmatrix}
    1 & * & * \\
    0 & * & * \\
    0 & 0 & * 
  \end{pmatrix}
  \mathfrak{B}_1
\end{equation}
puts additional constraints on the bottom row of $\gamma$.
We find that these constraints are best understood by considering $\xi' \in I_l$ such that $\xi'\xi \in \mathbb{Z}$.
If we define the integer matrix $C'$ by
\begin{equation}
  \label{eq:Cprimedef}
  C'\mathfrak{B}_l = \mathfrak{B}_1
  \begin{pmatrix}
    (\xi')^{(1)} & 0 & 0 \\
    0 & (\xi')^{(2)} & 0 \\
    0 & 0 & (\xi')^{(3)}
  \end{pmatrix}
  ,
\end{equation}
or $C' = c_1B_{l1}' + c_2 B_{l2}' + c_3B_{l3}'$, where $B_{li}'$ are defined via (\ref{eq:bijklprimedef}), then $\xi \xi' \in \mathbb{Z}$ if and only if
\begin{equation}
  \label{eq:CCprimeeq}
  C'C =
  \begin{pmatrix}
    * & * & * \\
    * & * & * \\
    0 & 0 & * 
  \end{pmatrix}
  .
\end{equation}
Hence if $\gamma$ satisfies (\ref{eq:cubicgammaeq3}), then the third row of $\gamma $ must be proportional to the third row of $C'$.

We can resolve ambiguities of sign by requiring that $\xi\xi'$ be a positive integer, so the more difficult condition to ensure is that the entries of the bottom row of $C'$ are coprime.
Our solution is just to divide $\xi'$ by the common divisor of these integers, and lemma \ref{lemma:Blprime} below shows that the result of this division is still be an element of $I_l$.
We construct the matrix $B_l'$ by taking the third row of $B_{li}'$ as the $i$th row of $B_l'$, so that
\begin{equation}
  \label{eq:Blprimedef}
  \begin{pmatrix}
    c_1' & c_2' & c_3'
  \end{pmatrix}
  B_l' =
  \begin{pmatrix}
    c_{31}' & c_{32}' & c_{33}'
  \end{pmatrix}
  .
\end{equation}
Then we have the following lemma, which implies that if the $c_{3j}'$ are integers, then so are the $c_j'$.

\begin{lemma}
  \label{lemma:Blprime}
  $B_l' \in \mathrm{GL}(3,\mathbb{Z})$.
\end{lemma}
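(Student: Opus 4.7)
The approach is to unpack the definition of $B_l'$ and observe that, in contrast to Lemma \ref{lemma:B1}, the situation collapses to a triviality once one substitutes $j = d = 3$.

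First I would unpack the definition. By (\ref{eq:bijklprimedef}), the $(j,k)$ entry of $B_{li}'$ is the coefficient of $\beta_{lk}$ in the expansion of $\beta_{li}\alpha^{3-j}$ in the $\mathbb{Z}$-basis $\{\beta_{l1}, \beta_{l2}, \beta_{l3}\}$ of $I_l$. The matrix $B_l'$ is built by taking the $j = 3$ row of each $B_{li}'$ and placing it as the $i$th row of $B_l'$; in other words, the $(i,k)$ entry of $B_l'$ is exactly $b_{i3kl}'$.

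Specializing $j = 3$ in the defining relation gives $\beta_{li}\alpha^{0} = \beta_{li}$. Since the $\beta_{lk}$ are $\mathbb{Z}$-linearly independent, the unique expansion forces $b_{i3kl}' = \delta_{ik}$. Hence the $i$th row of $B_l'$ is the standard basis vector $e_i^{T}$, and $B_l'$ equals the $3 \times 3$ identity matrix. In particular $B_l' \in \mathrm{GL}(3,\mathbb{Z})$, proving the lemma. The identity (\ref{eq:Blprimedef}) then reads $(c_1', c_2', c_3') = (c_{31}', c_{32}', c_{33}')$, which is consistent with the fact that the $j = 3$ row of $C'$ is by construction the coordinate vector of $\xi' \cdot 1 = \xi' = \sum_k c_k' \beta_{lk}$ in the basis $\{\beta_{lk}\}$.

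The main obstacle: there is none of substance. The lemma parallels Lemma \ref{lemma:B1} in form but is forced to be trivial, because $B_l'$ encodes multiplication by $\alpha^{3-j}$ on $I_l$ in its own basis, and the choice $j = d$ makes this the identity operator. This suffices for the application: if $d = \gcd(c_{31}', c_{32}', c_{33}') = \gcd(c_1', c_2', c_3')$, then dividing $\xi' = \sum_j c_j' \beta_{lj}$ by $d$ yields $\sum_j (c_j'/d) \beta_{lj}$, which has integer coefficients and hence lies in $I_l$, as needed in the construction of the primitive representative preceding the lemma.
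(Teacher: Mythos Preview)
Your argument is correct: by definition the $i$th row of $B_l'$ is the $j=3$ row of $B_{li}'$, and specializing $j=3$ in \eqref{eq:bijklprimedef} gives $\beta_{li}\alpha^{0}=\beta_{li}=\sum_k b'_{i3kl}\beta_{lk}$, whence $b'_{i3kl}=\delta_{ik}$ and $B_l'=I_3$. This is even confirmed by the worked example \eqref{eq:exampleB}, where the third rows of $B_1',B_2',B_3'$ are visibly $e_1^T,e_2^T,e_3^T$. So \eqref{eq:Blprimedef} collapses to $(c_1',c_2',c_3')=(c_{31}',c_{32}',c_{33}')$, and the lemma is immediate.

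This is genuinely different from, and much shorter than, the paper's proof. The paper argues along the lines of Lemma~\ref{lemma:B1}: it passes to the transpose, invokes the explicit description \eqref{eq:different} of $\transpose{\mathfrak{B}_1}^{-1}$ as a basis of the inverse different $\frac{1}{F'(\alpha)}\mathbb{Z}[\alpha]$, and then uses Proposition~\ref{proposition:invariantfactors} to show that $B_l'$ sends primitive vectors to primitive vectors. That machinery is exactly what is needed for $B_l$ in Lemma~\ref{lemma:B1}, where the relevant row/column does \emph{not} correspond to multiplication by $1$; but for $B_l'$ the choice $j=d$ makes the operator the identity on $I_l$ in its own basis, and the heavier argument is unnecessary. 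Your observation would streamline this part of the paper considerably.
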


\begin{proof}
  The proof of this lemma is similar to that of lemma \ref{lemma:B1}, and just as in that proof, $B_l'$ has integer entries and is easily seen to have nonzero determinant.
  Hence it suffices to show that $B_l'$ maps primitive vectors to primitive vectors.
  Accordingly, we set $\xi' = c_1' \beta_{l1} + c_2'\beta_{l2} + c_3'\beta_{l3}$ with $\gcd(c_1', c_2', c_3') = 1$, and we observe that this implies that the ideal $\xi' I_l^{-1} \subset \mathbb{Z}[\alpha]$ is not divisible by any rational integers.

  We have
  \begin{equation}
    \label{eq:Cprimeeq}
    C' \mathfrak{B}_l = \mathfrak{B}_1
    \begin{pmatrix}
      (\xi')^{(1)} & 0 & 0 \\
      0 & (\xi')^{(2)} & 0 \\
      0 & 0 & (\xi')^{(3)}
    \end{pmatrix}
    ,
  \end{equation}
  and so
  \begin{equation}
    \label{eq:transposeC'primeeq}
    \transpose{\mathfrak{B}_l}^{-1}
    \begin{pmatrix}
      (\xi')^{(1)} & 0 & 0 \\
      0 & (\xi')^{(2)} & 0 \\
      0 & 0 & (\xi')^{(3)}
    \end{pmatrix}
    = \transpose{C}' \transpose{\mathfrak{B}_1}^{-1}.
  \end{equation}
  It is well known, see for example \cite{ConradNote}, that since $\mathbb{Z}[\alpha]$ is monogenic, $\transpose{\mathfrak{B}_1}^{-1}$ is a basis for the fractional ideal generated by $\frac{1}{F'(\alpha)}$.
  In fact we have
  \begin{multline}
    \label{eq:different}
    \transpose{
      \begin{pmatrix}
        (\alpha^{(1)})^2 & (\alpha^{(2)})^2 & (\alpha^{(3)})^2 \\
        \alpha^{(1)} & \alpha^{(2)} & \alpha^{(3)} \\
        1 & 1 & 1
      \end{pmatrix}
      ^{-1}} \\
    =
    \begin{pmatrix}
      0 & 0 & 1 \\
      0 & 1 & a_1 \\
      1 & a_1 & a_2
    \end{pmatrix}
    \begin{pmatrix}
      (\alpha^{(1)})^2 & (\alpha^{(2)})^2 & (\alpha^{(3)})^2 \\
      \alpha^{(1)} & \alpha^{(2)} & \alpha^{(3)} \\
      1 & 1 & 1
    \end{pmatrix}
    \begin{pmatrix}
      F'(\alpha^{(1)})^{-1} & 0 & 0 \\
      0 & F'(\alpha^{(2)})^{-1} & 0 \\
      0 & 0 & F'(\alpha^{(3)})^{-1} 
    \end{pmatrix}
    .
  \end{multline}
  It follows that
  \begin{equation}
    \label{eq:Cprimeideal}
    \transpose{C'}
    \begin{pmatrix}
      0 & 0 & 1 \\
      0 & 1 & a_1 \\
      1 & a_1 & a_2
    \end{pmatrix}
    \mathfrak{B}_1
  \end{equation}
  is a basis for an integral ideal not divisible by any rational integers, and so, following the proof of lemma \ref{lemma:B1}, proposition \ref{proposition:invariantfactors} implies that the first column of
  \begin{equation}
    \label{eq:firstcolumn1}
     \transpose{C'}
    \begin{pmatrix}
      0 & 0 & 1 \\
      0 & 1 & a_1 \\
      1 & a_1 & a_2
    \end{pmatrix}
    ,
  \end{equation}
  i.e. the third row of $C'$, has coprime entries.  
\end{proof}

\section{Applications}
\label{sec:applications}

We now derive some consequences of the previous theorems, propositions \ref{proposition:generalapproximation}, \ref{proposition:spacing}, \ref{proposition:cotypezeta}, \ref{proposition:composition}, and \ref{proposition:cubiccomposition}.
We first discuss in section \ref{sec:approximationspacing} propositions \ref{proposition:generalapproximation} and \ref{proposition:spacing} on approximations to the vector $\left( \frac{\mu^{d-j}}{m}\right)$ and bounds for the number of these vectors contained in small balls.
Then in section \ref{sec:cotypezeta} we find an explicit Euler product for the co-type zeta function for the cubic order $\mathbb{Z}[2^{1/3}]$, proposition \ref{proposition:cotypezeta}.
Finally, in section \ref{sec:composition} we prove propositions \ref{proposition:composition} and \ref{proposition:cubiccomposition} on the operations on the roots of the congruence corresponding to ideal composition. 

\subsection{Approximation and bounds for $\left(\frac{\mu^{d-j}}{m}\right)$}
\label{sec:approximationspacing}

To start our proof of proposition \ref{proposition:generalapproximation}, we first select the fundamental domain $\mathcal{D}$ to use in the application of theorem \ref{theorem:parameterization1}.
The property we require of $\mathcal{D}$ is that all the embeddings of $\xi \in \mathcal{D}$ have the same size as $N(\xi)^{\frac{1}{d}}$.
From Dirichlet's unit theorem, in the logarithmic embedding of $\mathbb{Z}[\alpha]\cap C_{r_1,r_2}^+$ into $\mathbb{R}^d$, the totally positive units cut out a rank $d-1$ lattice on the plane orthogonal to the vector $(1,1,\dots,1)$.
A fundamental domain for the action of the totally positive units can be taken to be the region that projects parallel to $(1,1,\dots,1)$ onto a fundamental parallelopiped of this lattice.
Taking this fundamental parallelopiped to be within a bounded distance of the origin, it is clear that if $\xi$ is in such a fundamental domain, then $(\log |\xi^{(1)}|, \dots, \log |\xi^{(d)}|)$ is within a bounded distance of its orthogonal projection onto the span of $(1,\dots, 1)$, i.e. $(\frac{\log N(\xi)}{d}, \dots, \frac{\log N(\xi)}{d})$.
It follows that $\xi^{(j)} \asymp N(\xi)^{1/d}$ as desired.

We now recall the relation between $\xi$ and $C$ from theorems \ref{theorem:parameterization} and \ref{theorem:parameterization1}, i.e.
\begin{equation}
  \label{eq:xiCrelation}
  \mathfrak{B}_l
  \begin{pmatrix}
    \xi^{(1)} & \cdots & 0 \\
    \vdots & \ddots & \vdots \\
    0 & \cdots & \xi^{(d)}
  \end{pmatrix}
  = C \mathfrak{B}_1.
\end{equation}
We have
\begin{equation}
  \label{eq:Crewrite}
  C = \mathfrak{B}_l^{-1}
  \begin{pmatrix}
    \xi^{(1)} & \cdots & 0 \\
    \vdots & \ddots & \vdots \\
    0 & \cdots & \xi^{(d)}
  \end{pmatrix}
  \mathfrak{B}_1,
\end{equation}
and also, setting $C_{ij}$ to be the minor of $C$ with $i$th row and $j$th column removed,
\begin{equation}
  \label{eq:xiinverse}
  \begin{pmatrix}
    (\xi^{(1)})^{-1} & \cdots & (\xi^{(d)})^{-1} 
  \end{pmatrix}
  = \frac{1}{m}
  \begin{pmatrix}
    (-1)^{d+1} \det C_{1d} & \cdots & \det C_{dd} 
  \end{pmatrix}
  \mathfrak{B}_l
\end{equation}
by examining the last row of
\begin{equation}
  \label{eq:lastrow}
  C^{-1} \mathfrak{B}_l = \mathfrak{B}_1
  \begin{pmatrix}
    \xi^{(1)} & \cdots & 0 \\
    \vdots & \ddots & \vdots \\
    0 & \cdots & \xi^{(d)}
  \end{pmatrix}
  ^{-1}
\end{equation}
and recalling that
\begin{equation}
  \label{eq:B1recall1}
  \mathfrak{B}_1 =
  \begin{pmatrix}
    (\alpha^{(1)})^{d-1} & \cdots & (\alpha^{(d)})^{d-1} \\
    \vdots & \ddots & \vdots \\
    \alpha^{(1)} & \cdots & \alpha^{(d)} \\
    1 & \cdots & 1
  \end{pmatrix}
  .
\end{equation}
From these equations, it is evident that the entries $c_{ij}$ of $C$ are fixed linear combinations of the embeddings of $\xi$ and that the embeddings of $\xi$ are fixed combinations of the $\frac{1}{m} \det C_{kd}$.
It follows that if the embeddings of $\xi$ are all $\asymp m^{1/d}$, then $c_{ij} \ll m^{1/d}$ for all $i$ and $j$, and $\det C_{kd} \gg m^{1 - 1/d}$ for some $k$.

The $k$ for which $\det C_{kd} \gg m^{1 - 1/d}$ holds is exactly the $k$ giving the approximation in proposition \ref{proposition:generalapproximation}.
However for the sake of exposition, we assume in what follows that $\det C_{dd} \gg m^{1 - 1/d}$, that is $k = d$, and leave the necessary modifications to the argument in the case $k < d$ to the reader.

Using the notation established in theorem \ref{theorem:parameterization1}, we recall that
\begin{equation}
  \label{eq:gammaCeq}
  \begin{pmatrix}
    1 & \cdots & 0 & -\mu^{d-1} \\
    \vdots & \ddots & \vdots & \vdots \\
    0 & \cdots & 1 & -\mu \\
    0 & \cdots & 0 & m 
  \end{pmatrix}
  = \gamma C,
\end{equation}
and we also recall that
\begin{equation}
  \label{eq:gammainverseform2}
  \gamma^{-1} =
  \begin{pmatrix}
    c_{11} & \cdots & c_{1(d-1)} & u_1 \\
    \vdots & \ddots & \vdots & \vdots \\
    c_{d1} & \cdots & c_{d(d-1)} & u_d
  \end{pmatrix}
  .
\end{equation}
Rearranging the last column of this matrix equation, we obtain
\begin{equation}
  \label{eq:lastcolumn}
  \begin{pmatrix}
    c_{11} & \cdots & c_{(d-1)(d-1)} \\
    \vdots & \ddots & \vdots \\
    c_{(d-1)1} & \cdots & c_{(d-1)(d-1)}
  \end{pmatrix}
  \begin{pmatrix}
    \frac{\mu^{d-1}}{m} \\
    \vdots \\
    \frac{\mu}{m}
  \end{pmatrix}
  = 
  \begin{pmatrix}
    u_1 \\ \vdots \\ u_{d-1} 
  \end{pmatrix}
  - \frac{1}{m}
  \begin{pmatrix}
    c_{1d} \\ \vdots \\ c_{(d-1)d}
  \end{pmatrix}
  ,
\end{equation}
in addition to $d-1$ other equations resulting from removing other rows besides the $d$th.
In the case $k < d$ one would consider the equation resulting from removing the $k$th row.

As $c_{ij} \ll m^{1/d}$, we can interpret these equations as the vector $\left( \frac{\mu^{d-j}}{m} \right) \in \mathbb{R}^{d-1} / \mathbb{Z}^{d-1}$ being close to the $d-1$  planes $c_{j1}X_1 + \cdots + c_{j(d-1)}X_{d-1} = u_j$, $1\leq j \leq d-1$.
Moreover, under the assumption that
\begin{equation}
  \label{eq:detcondition}
  \det C_{dd} \gg m^{1 - \frac{1}{d}},
\end{equation}
the vector in fact lies close to the intersection of these $d-1$ planes, thus verifying proposition \ref{proposition:generalapproximation} in this case.

From (\ref{eq:lastcolumn}) we have
\begin{equation}
  \label{eq:lastcolumn1}
  \frac{1}{m}
  \begin{pmatrix}
    \mu^{d-1} \\
    \vdots \\
    \mu
  \end{pmatrix}
  = C_{dd}^{-1}
  \begin{pmatrix}
    u_1 \\
    \vdots \\
    u_{d-1}
  \end{pmatrix}
  - \frac{1}{m} C_{dd}^{-1}
  \begin{pmatrix}
    c_{1d} \\
    \vdots \\
    c_{(d-1)d}
  \end{pmatrix}
  ,
\end{equation}
and so all that needs to be proved is that under the assumption \ref{eq:detcondition} we have
\begin{equation}
  \label{eq:errorterm}
  C_{dd}^{-1}
  \begin{pmatrix}
    c_{1d} \\ \vdots \\ c_{(d-1)d} 
  \end{pmatrix}
  \ll 1. 
\end{equation}
To verify (\ref{eq:errorterm}), we rearrange the last column of the equation $CC^{-1} = I$ to obtain
\begin{equation}
  \label{eq:errorterm2}
  C_{dd}
  \begin{pmatrix}
    (-1)^{d} \det C_{d1} \\
    \vdots \\
    \det C_{d(d-1)}
  \end{pmatrix}
  = \det C_{dd}
  \begin{pmatrix}
    c_{1d} \\ \vdots \\ c_{(d-1)d}
  \end{pmatrix}
  ,
\end{equation}
and so
\begin{equation}
  \label{eq:errorterm3}
  C_{dd}^{-1}
  \begin{pmatrix}
    c_{1d} \\
    \vdots \\
    c_{(d-1)d} 
  \end{pmatrix}
  = \frac{1}{\det C_{dd}}
  \begin{pmatrix}
    (-1)^d \det C_{d1} \\
    \vdots \\
    \det C_{d(d-1)}
  \end{pmatrix}
  .
\end{equation}
It is clear now that (\ref{eq:errorterm}) follows from $\det C_{dd} \gg m^{1 - 1/d}$ and $c_{ij} \ll m^{1/d}$.
We remark that in the analogous argument for $k < d$ being the one that satisfies $\det C_{kd} \gg m^{1 - 1/d}$, one rearranges the $k$th column of $CC^{-1} = I$.

\bigskip 

We now turn to the proof of proposition \ref{proposition:spacing}.
As discussed in the introduction, we begin with a discussion of the spacing properties between general rational points in $\mathbb{R}^{d-1}$ and later specialize to the approximations of $\left(\frac{\mu^{d-j}}{m}\right)$ given in proposition \ref{proposition:generalapproximation}.
Every rational point can be written uniquely in the form $\left( \frac{r_1}{q}, \dots, \frac{r_{d-1}}{q}\right)$ where $q$ is a positive integer and the $r_j$ and $q$ are coprime integers, i.e. $\gcd(r_1, \dots, r_{d-1}, q) = 1$.
We remark that written this way, $q$ is the torsion of the coset of the rational point in $\mathbb{R}^{d-1} / \mathbb{Z}^{d-1}$, and we refer to such a point as a $q$-torsion point.
We also note that the point $\left( \frac{r_1}{q}, \dots, \frac{r_{d-1}}{q}\right)$ is naturally identified with the point in projective space having homogeneous coordinates $(r_1, \dots, r_{d-1}, q)$.

Given two torsion points $\bm{r}$ and $\bm{r}'$, we consider the Pl\"ucker coordinates of the line containing both.
These coordinates are the quantities $s_{ij}$, $1\leq i < j \leq d$, formed by taking $2\times 2$ determinants from the matrix
\begin{equation}
  \label{eq:plucker}
  \begin{pmatrix}
    r_1 & \cdots & r_{d-1} & q \\
    r_1' & \cdots & r_{d-1}' & q'
  \end{pmatrix}
  ,
\end{equation}
i.e.
\begin{equation}
  \label{eq:sijdef}
  s_{ij} = \det
  \begin{pmatrix}
    r_i & r_j \\
    r_i' & r_j' 
  \end{pmatrix}
  \mathrm{\ if\ } i,j< d,\quad s_{id} = \det
  \begin{pmatrix}
    r_i & q \\
    r_i' & q'
  \end{pmatrix}
  .
\end{equation}
We remark that one should only consider these coordinates up to scalar multiplication in order for the line to determine the coordinates.
However, we find the distinction between multiples of Pl\"ucker coordinates to be useful, as indicated in the following observation:
\begin{equation}
  \label{eq:distance}
  \vert\vert \bm{r} - \bm{r}'\vert\vert = \frac{1}{qq'} \left(s_{1d}^2 + \cdots + s_{(d-1)d}^2 \right)^{1/2}. 
\end{equation}

Fixing a torsion point $\bm{r}$, we can lower bound the distance between $\bm{r}$ and any other torsion point by considering the set of all $(s_{id}) \in \mathbb{Z}^{d-1}$ formed as in (\ref{eq:sijdef}) as $\bm{r}'$ ranges over all torsion points.
We observe that since
\begin{equation}
  \label{eq:pluckeradd}
  \det
  \begin{pmatrix}
    r_i & q \\
    r_i' & q'
  \end{pmatrix}
  + \det
  \begin{pmatrix}
    r_i & q \\
    r_i'' & q''
  \end{pmatrix}
  = \det
  \begin{pmatrix}
    r_i & q \\
    r_i' + r_i'' & q' + q''
  \end{pmatrix}
  ,
\end{equation}
this set is additive and so forms a sublattice $\Lambda(\bm{r})$ of $\mathbb{Z}^{d-1}$ which is easily seen to have full rank.
Moreover, since
\begin{equation}
  \label{eq:pluckertranslate}
  \det
  \begin{pmatrix}
    r_j + kq & q \\
    r_j' & q' 
  \end{pmatrix}
  = \det
  \begin{pmatrix}
    r_j & q \\
    r_j' - kq' & q'
  \end{pmatrix}
  ,
\end{equation}
$\Lambda(\bm{r})$ only depends on the coset of $\bm{r}$ in $\mathbb{R}^{d-1} / \mathbb{Z}^{d-1}$.
Geometrically we think of $\Lambda(\bm{r})$ as being identified with the integral lines containing $\bm{r}$, and one can work around the caveats mentioned in the previous paragraph by working only with the primitive elements in $\Lambda(\bm{r})$.
We record these observations together with (\ref{eq:distance}) in the following lemma.

\begin{lemma}
  \label{lemma:torsionpointspacing}
  Let $Q$ be a positive real number and let $\bm{r}$ be a $q$-torsion point in $\mathbb{R}^{d-1}/\mathbb{Z}^{d-1}$.
  Then the distance between $\bm{r}$ and any distinct torsion point with torsion $\leq Q$ is at least
  \begin{equation}
    \label{eq:torsionpointspacing}
    \frac{1}{qQ} \min \{ ||\bm{v}||\ :\ \bm{v}\in\Lambda(\bm{r}), \bm{v}\neq 0\}. 
  \end{equation}
\end{lemma}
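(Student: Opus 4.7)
The plan is to combine the distance formula (\ref{eq:distance}), the definition of the lattice $\Lambda(\bm{r})$, and the definition of the torus metric. For any two cosets $\bm{r},\bm{r}'$ in $\mathbb{R}^{d-1}/\mathbb{Z}^{d-1}$, the quotient distance is $\min_{\bm{k}\in\mathbb{Z}^{d-1}}\|\bm{r}-\bm{r}'-\bm{k}\|$ taken over any fixed lifts. First I would note that an integer translation $\bm{r}'+\bm{k}$ of a chosen $q'$-torsion lift is again a $q'$-torsion lift, with integer numerators shifted to $r_i'+k_i q'$, and so by (\ref{eq:sijdef}) the Pl\"ucker vector transforms as $\bm{v}_{\bm{k}}=\bm{v}_{\bm{0}}-qq'\bm{k}$. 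In particular each $\bm{v}_{\bm{k}}$ is the Pl\"ucker vector of a genuine pair of torsion points with first point $\bm{r}$.

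Next, for each such lift formula (\ref{eq:distance}) gives
\begin{equation*}
\|\bm{r}-\bm{r}'-\bm{k}\|=\frac{1}{qq'}\|\bm{v}_{\bm{k}}\|,
\end{equation*}
and by the definition of $\Lambda(\bm{r})$ we have $\bm{v}_{\bm{k}}\in\Lambda(\bm{r})$. Moreover $\bm{v}_{\bm{k}}\neq 0$: vanishing would force $r_i/q=(r_i'+k_i q')/q'$ for every $i$, i.e.\ the two lifts coincide as points in $\mathbb{R}^{d-1}$, making $\bm{r}$ and $\bm{r}'$ the same coset and contradicting distinctness.

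Finally, I would take the minimum over $\bm{k}\in\mathbb{Z}^{d-1}$ and bound the min of $\|\bm{v}_{\bm{k}}\|$ below by the first successive minimum of $\Lambda(\bm{r})$, giving
\begin{equation*}
\mathrm{dist}(\bm{r},\bm{r}')\;\ge\;\frac{1}{qq'}\min\bigl\{\|\bm{w}\|:\bm{w}\in\Lambda(\bm{r}),\ \bm{w}\neq 0\bigr\};
\end{equation*}
the hypothesis $q'\le Q$ then converts $1/(qq')$ to at least $1/(qQ)$, yielding the lemma. The only real care needed is the cosets-versus-lifts bookkeeping and the observation (\ref{eq:pluckertranslate}) that $\Lambda(\bm{r})$ depends only on the coset of $\bm{r}$; these are already recorded in the discussion preceding the lemma, so no genuine obstacle arises.
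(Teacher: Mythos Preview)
Your proof is correct and follows essentially the same route as the paper, which does not give a separate proof but simply records the lemma as a consequence of the preceding observations (the distance formula (\ref{eq:distance}), the lattice structure of $\Lambda(\bm{r})$, and its coset invariance (\ref{eq:pluckertranslate})). Your write-up is in fact slightly more careful than the paper's, since you explicitly handle the torus metric by minimizing over integer translates $\bm{k}$ and verify that each translated Pl\"ucker vector $\bm{v}_{\bm{k}}=\bm{v}_{\bm{0}}-qq'\bm{k}$ remains a nonzero element of $\Lambda(\bm{r})$; the paper leaves this bookkeeping implicit.
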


For our purposes of proving proposition \ref{proposition:spacing}, we take the point on the right side of (\ref{eq:approximation}) as $\bm{r}$.
As above, we assume that the $k$ giving the approximation to $\left( \frac{\mu^{d-j}}{m} \right)$ is $k=d$, namely
\begin{equation}
  \label{eq:torsionpoint}
  \bm{r} = C_{dd}^{-1}
  \begin{pmatrix}
    u_1 \\
    \vdots \\
    u_{d-1}
  \end{pmatrix}
  ,
\end{equation}
which has torsion $|\det C_{dd}|$.
This expression naturally gives $\bm{r}$ as the intersection of the $d-1$ planes
\begin{equation}
  \label{eq:planes}
  c_{i1}X_1 + \cdots + c_{i(d-1)}X_{d-1} = u_i,
\end{equation}
$1\leq i \leq d -1 $, and so it is convenient to consider the integral lines containing $\bm{r}$ dually as the intersection of sets of $d-2$ hyper-planes containing $\bm{r}$.
In the case $k < d$< we would view the point as the intersection of the planes (\ref{eq:planes}) for $i \neq k$. 

The lattice $\Lambda(\bm{r})$ can be determined from this dual perspective as well.
If $\bm{r}'$ is another torsion point contained in the first $d-2$ hyperplanes, then we have
\begin{equation}
  \label{eq:dualplucker1}
  \begin{pmatrix}    
    c_{11} & \cdots & c_{1(d-1)} & u_1 \\    
    \vdots & \ddots & \vdots & \vdots \\    
    c_{(d-2)1} & \cdots & c_{(d-2)(d-1)} & u_{d-2}    
  \end{pmatrix}
  \begin{pmatrix}
    r_1 & r_1' \\
    \vdots & \vdots \\
    r_{d-1} & r_{d-1}' \\
    q & q'
  \end{pmatrix}
  =
  \begin{pmatrix}
    0 & 0 \\
    \vdots & \vdots \\
    0 & 0 
  \end{pmatrix}
  .
\end{equation}
We let $\gamma \in \mathrm{SL}(d, \mathbb{Z})$ be the matrix from theorems \ref{theorem:parameterization} and \ref{theorem:parameterization1}, so that the matrix on the left of (\ref{eq:dualplucker1}) is the first $d-2$ rows of $\gamma$.
Then (\ref{eq:dualplucker1}) implies that
\begin{equation}
  \label{eq:gammarrprime}
  \gamma
  \begin{pmatrix}
    r_1 & r_1' \\
    \vdots & \vdots \\
    r_{d-1} & r_{d-1}' \\
    q & q' 
  \end{pmatrix}
  =
  \begin{pmatrix}
    0 & 0 \\
    \vdots & \vdots \\
    0 & 0 \\
    * & * \\
    * & * 
  \end{pmatrix}
  .
\end{equation}
If the Pl\"ucker coordinates $s_{ij}$ corresponding to $\bm{r}'$ give a primitive vector in $\Lambda(\bm{r})$, then $\gcd( s_{ij}) = 1$, and as this condition is preserved by the action of $\mathrm{SL}(d, \mathbb{Z})$, we have that the $2\times 2$ determinant in the bottom of the matrix on the right of (\ref{eq:gammarrprime}) is equal to $\pm 1$.
It follows that there is a $\gamma ' \in \mathrm{GL}(d, \mathbb{Z})$ having the same first $d-2$ rows as $\gamma$ but also having $\bm{r}$, $\bm{r}'$ as the last two columns of $\gamma^{-1}$.
It now follows from the Jacobi's equality between complementary co-factors of a matrix and its inverse that the Pl\"ucker coordinates $s_{ij}$ are equal (up to sign) to the determinants of the $(d-2)\times (d-2)$ minors of the matrix
\begin{equation}
  \label{eq:dualplucker2}
  \begin{pmatrix}
    c_{11} & \cdots & c_{1(d-1)} & u_1 \\
    \vdots & \ddots & \vdots & \vdots \\
    c_{(d-2)1} & \cdots & c_{(d-2)(d-1)} & u_{d-2} 
  \end{pmatrix}
\end{equation}
obtained by removing two columns. 

These observations, repeated for all choices of $d-2$ hyperplanes from the $d-1$, shows that $\Lambda$ is generated by the columns of the matrix $(\det C_{dd}) C_{dd}^{-1}$, which we denote by $\bm{c}_{j}$. 
Having this expression for a basis of $\Lambda(\bm{r})$ allows us to control the size of its shortest, nonzero vector, at least under the same hypothesis under which $\bm{r}$ is guaranteed to approximate $\left( \frac{\mu^{d-j}}{m} \right)$, i.e. if $i = d$ in the context of proposition \ref{proposition:generalapproximation}.

\begin{lemma}
  \label{lemma:latticecompact}
  If $\det C_{dd} \gg m^{1-1/d}$, then the matrix $(\det C_{dd}) C_{dd}^{-1}$, normalized by $(\det C_{dd})^{- (d-2)/(d-1)}$ to have determinant $1$, lies in a fixed, compact subset of $\mathrm{SL}(d-1,\mathbb{R})$, and hence the smallest vector in $\Lambda$ has size $\gg (\det C_{dd})^{(d-2)/(d-1)} \gg m^{1-2/d}$. 
\end{lemma}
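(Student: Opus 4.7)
The plan is to use the bound $c_{ij} \ll m^{1/d}$ established earlier in the section together with the normalization to convert everything into a statement about a matrix in a compact subset of $\mathrm{SL}(d-1, \mathbb{R})$. I will write $D = \det C_{dd}$ throughout for brevity, and denote by $M = D^{-(d-2)/(d-1)} D\, C_{dd}^{-1}$ the normalized matrix in the statement. Since $D \cdot C_{dd}^{-1}$ equals the adjugate of $C_{dd}$, its entries are $(d-2)\times(d-2)$ minors of $C_{dd}$, which in view of $c_{ij} \ll m^{1/d}$ are bounded by $O(m^{(d-2)/d})$. The hypothesis $D \gg m^{1 - 1/d} = m^{(d-1)/d}$ gives $D^{-(d-2)/(d-1)} \ll m^{-(d-2)/d}$, and multiplying the two bounds shows that the entries of $M$ are $\ll 1$, while of course $\det M = 1$ by construction.

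Next I would show that this entry bound, combined with $\det M = 1$, forces $M$ into a fixed compact subset of $\mathrm{SL}(d-1, \mathbb{R})$. The key point is that the entries of $M^{-1}$ are again $(d-2)\times(d-2)$ minors of $M$ divided by $\det M = 1$, and hence are also $\ll 1$. Thus $M$ and $M^{-1}$ both have operator norms bounded by a fixed constant depending only on $d$, which is equivalent to $M$ lying in a compact set in $\mathrm{SL}(d-1,\mathbb{R})$.

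For the second conclusion, I recall that the lattice $\Lambda = \Lambda(\bm{r})$ was identified above with the $\mathbb{Z}$-span of the columns of $D\, C_{dd}^{-1}$, i.e. with $D^{(d-2)/(d-1)} M\, \mathbb{Z}^{d-1}$. For any nonzero $v \in \mathbb{Z}^{d-1}$, the inequality $\|Mv\| \geq \|v\| / \|M^{-1}\| \gg \|v\| \geq 1$ gives a uniform lower bound on the shortest nonzero vector of $M\,\mathbb{Z}^{d-1}$. Rescaling by $D^{(d-2)/(d-1)}$ and using the hypothesis $D \gg m^{(d-1)/d}$ then yields
\[
\min\{\|\bm{v}\| : \bm{v}\in \Lambda,\ \bm{v}\neq 0\} \gg D^{(d-2)/(d-1)} \gg m^{(d-2)/d} = m^{1 - 2/d},
\]
which is the asserted bound.

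I do not anticipate any real obstacle here; the main point is simply to choose the normalization correctly so that the entrywise bound on the minors of $C_{dd}$ matches the determinant scaling. The only thing one must be slightly careful with is the case $k < d$ that was deferred to the reader in the proof of proposition \ref{proposition:generalapproximation}; for that case one would argue identically after replacing $C_{dd}$ by $C_{kd}$ and the corresponding row index in the dual description of $\Lambda(\bm{r})$, so the same bound $m^{1-2/d}$ applies.
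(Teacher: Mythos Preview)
Your proof is correct and follows essentially the same idea as the paper: bound the entries of the adjugate $D\,C_{dd}^{-1}$ by $m^{(d-2)/d}$ via the $(d-2)\times(d-2)$ minors, match this against the normalization $D^{-(d-2)/(d-1)}$, and conclude compactness. The only small variation is in how compactness is pinned down: the paper applies Hadamard's inequality to obtain two-sided bounds $\|\bm{c}_j\| \asymp m^{(d-2)/d}$ on the columns, whereas you bound the entries of $M^{-1}$ directly (noting they are again minors of $M$, or equivalently $M^{-1} = D^{-1/(d-1)}C_{dd}$). Your route is arguably a bit more streamlined, since bounded entries for both $M$ and $M^{-1}$ is the most direct criterion for lying in a compact subset of $\mathrm{SL}(d-1,\mathbb{R})$ and immediately gives the operator-norm inequality $\|Mv\| \gg \|v\|$ you use for the shortest-vector bound; the paper instead appeals to the geometry-of-numbers picture (the normalized lattice stays away from the cusp in $\mathrm{SL}(d-1,\mathbb{Z})\backslash\mathrm{SL}(d-1,\mathbb{R})$).
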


\begin{proof}
  It is easy to see that the determinant of the matrix $(\det C_{dd}) C_{dd}^{-1}$ is $(\det C_{dd})^{d-2}$, so Hadamard's inequality implies
  \begin{equation}
    \label{eq:hadamard}
    (\det C_{dd}) ^{d-2} \leq ||\bm{c}_{1}|| \cdots || \bm{c}_{(d-1)}||.
  \end{equation}
  On the other hand, since each of the $\bm{c}_{j}$ have coordinates polynomials of degree $d-2$ in the $c_i$, which we recall are $\ll m^{1/d}$, we have $||\bm{c}_{j}|| \ll m^{(d - 2)/d}$.
  Replacing all but one of the $||\bm{c}_{j}||$ in (\ref{eq:hadamard}) by this bound, we have that, under the hypothesis $\det C_{dd} \gg m^{1-1/d}$,
  \begin{equation}
    \label{eq:basisbounds}
    m^{(d-1)(d-2)/d} \ll ||\bm{c}_{j}|| m^{(d-2)^2/d} \ll m^{(d-1)(d-2)/d},
  \end{equation}
  whence $||\bm{c}_{j}|| \asymp m^{1-2/d} \asymp (\det C_{dd})^{(d-2)/(d-1)}$.
  These estimates are enough to show that upon normalizing so that the determinant is $1$, the resulting matrix is in a compact subset of $\mathrm{SL}(d-1, \mathbb{R})$.

  Clearly then the normalized lattice obtained from $\Lambda$, having this basis, lies in a compact subset of $\mathrm{SL}(d-1, \mathbb{Z}) \backslash \mathrm{SL}(d-1, \mathbb{R})$, and so does not approach the cusp.
  This shows that the smallest vector in $\Lambda$ has size $\gg$ the $(d-1)$th root of the determinant, finishing the proof of the lemma.
\end{proof}

In view of the above lemma on the spacing between torsion points, lemma \ref{lemma:torsionpointspacing}, the proof of proposition \ref{proposition:spacing} is almost finished.
Indeed, for each of the points $\left( \frac{\mu^{d-j}}{m} \right)$ contained in a ball of radius $\frac{1}{M}$, all the approximations given by proposition \ref{proposition:generalapproximation} are contained in a ball of radius $O(\frac{1}{M})$.
However, by lemmas \ref{lemma:latticecompact} and \ref{lemma:torsionpointspacing}, each of these approximations are spaced from each other by at least
\begin{equation}
  \label{eq:spacing}
  \gg \frac{1}{M^{2(d-1)/d}} M^{(d-2)/d} = \frac{1}{M}.
\end{equation}
Hence there can be at most $\ll 1$ of these approximations in this ball.
The theorem is then proved if we can show that at most $\ll 1$ of the points $\left( \frac{\mu^{d-j}}{m} \right)$ can correspond to a given one of the approximations.

We start by noting that a torsion point in $\mathbb{R}^{d-1}/ \mathbb{Z}^{d-1}$ determines the corresponding lattice $\Lambda(\bm{r})$, and because $\mathrm{SL}(d-1,\mathbb{Z})$ acts discontinuously on $\mathrm{SL}(d-1,\mathbb{R})$, the number of bases of $\Lambda$ lying in the compact set of lemma \ref{lemma:latticecompact} is bounded by a constant that depends only on the compact set.
For each one of these bases, there are $d$ cases to consider, one for each of the possible $i$ giving the approximation of proposition \ref{proposition:generalapproximation}.
Further, we need to consider each of the narrow ideal classes, but once these possibilities are accounted for, which for our purposes just multiplies the final bound by a constant, we claim that the basis of the lattice determines the $c_i$ and whence the $m$ and $\mu\pmod m$.
This would indeed show that the number is bounded by a constant depending only on the congruence.

To see this final step, we recall that the matrix $C$ is a linear combination of matrices $B_{li}$ depending on the ideal class, and the coefficients are exactly the $c_i$.
So, continuing to work only with the case $i=d$ from proposition \ref{proposition:generalapproximation}, the question of recovering the $c_i$ from the matrix $C_{dd}$ is a question about the linear independence of the corresponding $(d-1)\times (d-1)$ sub-matrices of the $B_{li}$ obtained by removing the $d$th row and column.

Denoting these sub-matrices by $B_{1id}$, we suppose there are numbers $c_i$ so that
\begin{equation}
  \label{eq:lineardependence}
  \sum_{i=1}^d c_i B_{1id} =
  \begin{pmatrix}
    0 & \cdots & 0 \\
    \vdots & \ddots & \vdots \\
    0 & \cdots & 0
  \end{pmatrix}
  .
\end{equation}
From the definition of the $B_{li}$ this means that the corresponding $\xi = \sum c_i \overline{\beta}_i$ satisfies
\begin{equation}
  \label{eq:lineardependence2}
  \xi \beta_j \in \mathbb{Z}, \quad 1\leq j \leq d-1. 
\end{equation}
However, for a fixed $\xi$, the set of points $\beta \in C_{r_1,r_2}$ with $\xi \beta \in \mathbb{R}$ forms a line in $C_{r_1,r_2}$.
On the other hand, since the $\beta_j$ span a full rank lattice, namely $I_l$, at most one of the $\beta_j$ can lie on such a line.
This is a clearly contradicts (\ref{eq:lineardependence2}) when $d>2$, and so it remains to examine the case $d=2$.

This case is the most interesting because the $B_{112}, B_{122}$ are not linearly independent, they are just two numbers, and so one cannot recover $c_1, c_2$ from a linear combination $c_{11} = c_1B_{112} + c_2B_{122}$.
Instead we make use of the additional information contained in the numerator of the approximation $\frac{u_1}{c_{11}}$.
Since $\gamma^{-1} = 
\begin{pmatrix}
  c_{11} & u_1 \\
  c_{21} & u_2
\end{pmatrix}
\in \mathrm{SL}(2, \mathbb{Z})$, we have $c_{21} \equiv \overline{u}_1 \pmod {c_{11}}$.
Moreover $c_{11} \asymp m^{\frac{1}{2}}$ and $c_{21} \ll m^{\frac{1}{2}}$, and so we can recover $c_{21}$ from $\frac{u_1}{c_{11}}$ up to $O(1)$ possibilities.
Finally, as
\begin{equation}
  \label{eq:c1c2}
  \begin{pmatrix}
    c_{11} \\
    c_{21}
  \end{pmatrix}
  = B_l
  \begin{pmatrix}
    c_1 \\
    c_2
  \end{pmatrix}
\end{equation}
and $B_l \in \mathrm{GL}(2, \mathbb{Z})$, we recover $c_1, c_2$ up to $O(1)$ possibilities.

\subsection{Co-type zeta function for cubic orders}
\label{sec:cotypezeta}

For an ideal $I \subset \mathbb{Z}[2^{1/3}]$, we let $N_1(I)$, $N_2(I)$, and $N_3(I)$ denote the invariant factors.
That is
\begin{equation}
  \label{eq:invariantfactorsdef}
  \mathbb{Z}[2^{1/3}] / I \cong \mathbb{Z}/N_1(I) \mathbb{Z} \oplus \mathbb{Z}/N_2(I)\mathbb{Z} \oplus \mathbb{Z}/N_3(I)\mathbb{Z}
\end{equation}
with $N_3(I) \mid N_2(I)\mid N_2(I)$.
We define the co-type zeta function as
\begin{equation}
  \label{eq:cotypedef}
  \zeta_{\mathbb{Z}[2^{1/3}]} (s_1, s_2, s_3) = \sum_{0\neq I \subset \mathbb{Z}[2^{1/3}]} N_1(I)^{-s_1} N_2(I)^{-s_2} N_3(I)^{-s_3},
\end{equation}
where the sum is over ideals $I$.

In the language of theorem \ref{theorem:cubiccorrespondence} and proposition \ref{proposition:invariantfactors}, we see that $N_3(I)$ is the largest integer divisor of $I$, and, applying the theorem to $I/N_3(I)$, we have $m_2 = N_1(I)/N_2(I)$, $m_1 = N_2(I)/N_3(I)$.
We have
\begin{equation}
  \label{eq:cotype2}
  \begin{split}
    \zeta_{\mathbb{Z}[2^{1/3}]}(s_1,s_2,s_3) & = \sum_{0\neq I \subset \mathbb{Z}[2^{1/3}]} m_2(I)^{-s_1} m_1(I)^{-s_1 - s_2} N_3(I)^{-s_1 -s_2 -s_3} \\
    & = \zeta(s_1 + s_2 + s_3) \sum_{\substack{0\neq I \subset \mathbb{Z}[2^{1/3}] \\ l\nmid I, \forall l\in \mathbb{Z}}} m_2(I)^{-s_1} m_1(I)^{-s_1-s_2}, \\
  \end{split}
\end{equation}
where $\zeta(s)$ is the Riemann zeta function. We apply theorem \ref{theorem:cubiccorrespondence} to arrange this sum as
\begin{equation}
  \label{eq:cotype3}
  \begin{split}
    \zeta_{\mathbb{Z}[2^{1/3}]}(s_1, s_2, s_3) & = \sum_{\gcd(m_2,6) = 1}\sum_{\mu_2^3\equiv 2(m_2)} m_2^{-s_1} \sum_{m_1\geq 1}\sum_{\substack{\mu_1^3 \equiv 2 (m_1) \\ \gcd(m_1,m_2,\mu_1-\mu_2)=1}} m_1^{-s_1 - s_2} \\
    & \quad + \sum_{\gcd(m_2,6)=2} \sum_{\mu_2^3\equiv 2(m_2)} m_2^{-s_1} \sum_{\gcd(m_1,2)=1} \sum_{\substack{\mu_1^3\equiv 2(m_1) \\ \gcd(m_1,m_2,\mu_1-\mu_2)=1}} m_1^{-s_1 -s_2} \\
    & \quad + \sum_{\gcd(m_2,6)=3} \sum_{\mu_2^3\equiv 2(m_2)} m_2^{-s_1}\sum_{\gcd(m_1,3)=1} \sum_{\substack{\mu_1^3\equiv 2(m_1) \\ \gcd(m_1,m_2,\mu_1-\mu_2)=1}} m_1^{-s_1 - s_2} \\
    & \quad + \sum_{\gcd(m_2,6)=6} \sum_{\mu_2^3\equiv 2(m_2)} m_2^{-s_1} \sum_{\gcd(m_1,6)=1} \sum_{\substack{\mu_1^3 \equiv 2 (m_1) \\ \gcd(m_1,m_2,\mu_1-\mu_2)=1}} m_1^{-s_1 -s_2} \\
    & = S_1 + S_2 + S_3 + S_4,
  \end{split}
\end{equation}
say.

Starting with $S_1$, we first note that by the Chinese remainder theorem, the Dirichlet series in $m_1$ (with variable $s_1 + s_2$) has multiplicative coefficients, so we may consider each prime separately.
Since $\gcd(m_2, 6) = 1$, we find that Euler factors at $2$ and $3$ are $1 + 2^{-s_1 - s_2}$ and $1 + 3^{-s_1 - s_2}$.
For primes $p$ having $3$ distinct roots, the Euler factor at $p$ is either
\begin{equation}
  \label{eq:P1eulerfactor}
  1 + 2p^{-s_1 - s_2} + 2p^{-2s_1 - 2s_2} + \cdots = 1 + \frac{2 p^{-s_1 - s_2}}{1 - p^{-s_1 - s_2}}
\end{equation}
or
\begin{equation}
  \label{eq:P1eulerfactor1}
  1 + 3p^{-s_1 - s_2} + 3 p^{-2s_1 - 2s_2} + \cdots = 1 + \frac{3 p^{-s_1 - s_2}}{1 - p^{-s_1 - s_2}}
\end{equation}
depending on whether $p$ divides $m_2$ or not.
For primes $p \neq 2$ or $3$, we find that the Euler factor is just $1$ if $p\mid m_2$ and it is $(1 - p^{-s_1 - s_2})^{-1}$ if $p \nmid m_2$.
As the Euler factor for primes having no roots is just $1$, we conclude that
\begin{equation}
  \label{eq:S1}
  \begin{split}
    S_1  = & \sum_{\gcd(m_2,6)=1}\sum_{\mu_2^3\equiv 2(m_2)} m_2^{-s_1} (1 + 2^{-s_1 -s_2})(1 + 3^{-s_1 -s_2}) \\
    & \quad \times \prod_{\substack{p\mid m_2 \\ p\in\mathcal{P}_1}} \left( 1 + \frac{2p^{-s_1 -s_2}}{1 - p^{-s_1-s_2}} \right) \prod_{\substack{p\nmid m_2 \\ p \in \mathcal{P}_1}} \left( 1 + \frac{3p^{-s_1 -s_2}}{1 - p^{-s_1 -s_2}} \right) \prod_{\substack{p\nmid m_2 \\ p\in \mathcal{P}_2}} \left(\frac{1}{1 - p^{-s_1 -s_2}}\right),
  \end{split}
\end{equation}
where $\mathcal{P}_1$ is the set of primes in $\mathbb{Z}$ that split completely in $\mathbb{Z}[2^{1/3}]$ and $\mathcal{P}_2$ is the set of those that factor into a degree $1$ times a degree $2$ prime; neither $\mathcal{P}_1$ nor $\mathcal{P}_2$ contain $2$ or $3$. Explicitly, we have $\mathcal{P}_2$ is the set of all primes other than $2$ that are $\equiv 2 \pmod 3$, and $\mathcal{P}_1$ is the set of all primes that can be represented by the binary quadratic form $X^2 + 27Y^2$. 

We arrange this as
\begin{equation}
  \label{eq:S12}
  \begin{split}
    S_1 & = (1 + 2^{-s_1 -s_2})(1 + 3^{-s_1 -s_2}) \prod_{p\in\mathcal{P}_1} \left( 1 + \frac{3p^{-s_1 -s_2}}{1-p^{-s_1-s_2}} \right) \prod_{p\in\mathcal{P}_2} \left( \frac{1}{1-p^{-s_1-s_2}}\right) \\
    & \quad \times \sum_{\gcd(m,6)=1} \sum_{\mu^3\equiv 2(m)} m^{-s_1} \prod_{\substack{p\mid m \\ p\in\mathcal{P}_1}} \left( \frac{1 + p^{-s_1 - s_2}}{1 + 2p^{-s_1-s_2}} \right) \prod_{\substack{p\mid m \\ p\in \mathcal{P}_2}} \left( 1 - p^{-s_1-s_2}\right) \\
    & = (1 + 2^{-s_1-s_2})(1 + 3^{-s_1 -s_2}) \\
    & \quad \times \prod_{p\in\mathcal{P}_1} \left( \frac{ 1 + 2p^{-s_1} + 2p^{-s_1 -s_2} + p^{-2s_1 -s_2}}{(1 - p^{-s_1})(1 - p^{-s_1 - s_2})} \right)\prod_{p\in\mathcal{P}_2} \left( \frac{1 - p^{-2s_1 - s_2}}{(1 - p^{-s_1})(1-p^{-s_1-s_2})}\right). 
  \end{split}
\end{equation}
Similar calculations show that
\begin{equation}
  \label{eq:S234}
  \begin{split}
    S_2 & = 2^{-s_1}(1 + 3^{-s_1 - s_2}) \\
    & \quad \times \prod_{p\in\mathcal{P}_1} \left( \frac{ 1 + 2p^{-s_1} + 2p^{-s_1 -s_2} + p^{-2s_1 -s_2}}{(1 - p^{-s_1})(1 - p^{-s_1 - s_2})} \right)\prod_{p\in\mathcal{P}_2} \left( \frac{1 - p^{-2s_1 - s_2}}{(1 - p^{-s_1})(1-p^{-s_1-s_2})}\right) \\
    S_3 & = 3^{-s_1}(1+ 2^{-s_1 - s_2}) \\
    & \quad \times \prod_{p\in\mathcal{P}_1} \left( \frac{ 1 + 2p^{-s_1} + 2p^{-s_1 -s_2} + p^{-2s_1 -s_2}}{(1 - p^{-s_1})(1 - p^{-s_1 - s_2})} \right)\prod_{p\in\mathcal{P}_2} \left( \frac{1 - p^{-2s_1 - s_2}}{(1 - p^{-s_1})(1-p^{-s_1-s_2})}\right) \\
    S_4 & = 6^{-s_1} \prod_{p\in\mathcal{P}_1} \left( \frac{ 1 + 2p^{-s_1} + 2p^{-s_1 -s_2} + p^{-2s_1 -s_2}}{(1 - p^{-s_1})(1 - p^{-s_1 - s_2})} \right)\prod_{p\in\mathcal{P}_2} \left( \frac{1 - p^{-2s_1 - s_2}}{(1 - p^{-s_1})(1-p^{-s_1-s_2})}\right). \\
  \end{split}
\end{equation}
Putting these into (\ref{eq:cotype3}), we obtain
\begin{equation}
  \label{eq:cotype4}
  \begin{split}
    & \zeta_{\mathbb{Z}[2^{1/3}]}(s_1, s_2, s_3) = \\
    & \quad = (1 + 2^{-s_1} + 2^{-s_1 - s_2})(1 + 3^{-s_1} + 3^{-s_1 - s_2}) \zeta(s_1 + s_2 + s_3) \\
    & \quad \quad  \times \prod_{p\in\mathcal{P}_1} \left( \frac{ 1 + 2p^{-s_1} + 2p^{-s_1 -s_2} + p^{-2s_1 -s_2}}{(1 - p^{-s_1})(1 - p^{-s_1 - s_2})} \right)\prod_{p\in\mathcal{P}_2} \left( \frac{1 - p^{-2s_1 - s_2}}{(1 - p^{-s_1})(1-p^{-s_1-s_2})}\right). \\
  \end{split}
\end{equation}

\subsection{Composition of ideals}
\label{sec:composition}

We start our proof of proposition \ref{proposition:composition} with the following lemma.
\begin{lemma}
  \label{lemma:idealcontain}  
  Let $\mu \pmod m$ and $\nu \pmod n$ satisfy $F(\mu) \equiv 0 \pmod m$ and $F(\nu) \equiv 0 \pmod n$, and let $I$, $J$ be the corresponding ideals via theorem \ref{theorem:correspondence}.
  Then $m \equiv 0 \pmod n$ and $\mu \equiv \nu \pmod n$ if and only if $I \subset J$.
\end{lemma}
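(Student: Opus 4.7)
The plan is to prove this lemma by directly comparing the explicit bases of $I$ and $J$ furnished by theorem \ref{theorem:correspondence}. Specifically, $I$ is spanned over $\mathbb{Z}$ by $\alpha^{d-1} - \mu^{d-1}, \ldots, \alpha - \mu, m$, and similarly $J$ is spanned by $\alpha^{d-1} - \nu^{d-1}, \ldots, \alpha - \nu, n$. Both descriptions are in upper-triangular Hermite normal form with respect to the basis $\{\alpha^{d-1}, \ldots, \alpha, 1\}$ of $\mathbb{Z}[\alpha]$, and the lemma will follow from elementary linear algebra with these bases.

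For the forward direction, I would assume $n \mid m$ and $\mu \equiv \nu \pmod n$ and show that each generator of $I$ lies in $J$. The generator $m$ lies in $n\mathbb{Z} \subset J$ by the divisibility $n \mid m$. For the generators $\alpha^k - \mu^k$, $1 \leq k \leq d-1$, I write
\begin{equation*}
\alpha^k - \mu^k = (\alpha^k - \nu^k) + (\nu^k - \mu^k),
\end{equation*}
noting $\alpha^k - \nu^k \in J$ by construction, and $\nu^k - \mu^k \in n\mathbb{Z} \subset J$ since $\mu \equiv \nu \pmod n$ forces $\mu^k \equiv \nu^k \pmod n$. This gives $I \subset J$.

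For the reverse direction, assume $I \subset J$. First, from the Hermite normal form of the basis of $J$, the intersection $J \cap \mathbb{Z}$ consists precisely of integer linear combinations of the basis elements that contribute no $\alpha^{d-j}$ term for $j < d$. Since the basis element $\beta_i = \alpha^{d-i} - \nu^{d-i}$ contributes $\alpha^{d-i}$ with coefficient $1$ for $i < d$, we conclude $J \cap \mathbb{Z} = n\mathbb{Z}$. Applying this with $m \in I \cap \mathbb{Z} \subset J \cap \mathbb{Z}$ yields $n \mid m$. Next, since $\alpha - \mu \in I \subset J$, there exist integers $c_1, \ldots, c_d$ such that
\begin{equation*}
\alpha - \mu = \sum_{i=1}^{d-1} c_i (\alpha^{d-i} - \nu^{d-i}) + c_d n.
\end{equation*}
Comparing coefficients of $\alpha^{d-1}, \ldots, \alpha^2$ forces $c_1 = \cdots = c_{d-2} = 0$, the coefficient of $\alpha$ forces $c_{d-1} = 1$, and then the constant term yields $-\mu = -\nu + c_d n$, i.e.\ $\mu \equiv \nu \pmod n$.

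There is no serious obstacle here: the entire argument is a direct reading off of coefficients from the explicit Hermite normal form bases produced by theorem \ref{theorem:correspondence}. The only subtle point is verifying that $J \cap \mathbb{Z} = n\mathbb{Z}$ rather than a proper multiple of $n\mathbb{Z}$, but this is immediate from the shape of the basis once one observes that $\{\alpha^{d-1}, \ldots, \alpha, 1\}$ is a free $\mathbb{Z}$-basis of $\mathbb{Z}[\alpha]$, so coefficients are uniquely determined.
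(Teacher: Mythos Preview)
Your proof is correct and follows essentially the same approach as the paper: both arguments exploit the explicit Hermite normal form bases from theorem \ref{theorem:correspondence} and read off the conditions $n\mid m$ and $\mu\equiv\nu\pmod n$ from coefficient comparison. The only cosmetic difference is that the paper phrases the containment $I\subset J$ as the existence of an integral change-of-basis matrix $A$ between the two Hermite normal form matrices and then determines the shape of $A$, whereas you work element by element with the generators; these are the same computation in different clothing.
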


\begin{proof}
  We have by the correspondence in theorem \ref{theorem:correspondence} that $I\subset J$ if and only if there is an integral matrix $A$ such that
  \begin{equation}
    \label{eq:latticecontain}
    \begin{pmatrix}
      1 & \cdots & 0 & -\mu^{d-1} \\
      \vdots & \ddots & \vdots & \vdots \\
      0 & \cdots & 1 & -\mu \\
      0 & \cdots & 0 & m
    \end{pmatrix}
    = A
    \begin{pmatrix}
      1 & \cdots & 0 & -\nu^{d-1} \\
      \vdots & \ddots & \vdots & \vdots \\
      0 & \cdots & 1 & -\nu \\
      0 & \cdots & 0 & n
    \end{pmatrix}
    .
  \end{equation}
  Such an $A$ has to have the form
  \begin{equation}
    \label{eq:Aform}
    A = 
    \begin{pmatrix}
      1 & \cdots & 0 & k_{d-1} \\
      \vdots & \ddots & \vdots & \vdots \\
      0 & \cdots & 1 & k_1 \\
      0 & \cdots & 0 & \frac{m}{n}
    \end{pmatrix}
    ,
  \end{equation}
  and so $n\mid m$ is necessary.
  Moreover, integers $k_j$ such that (\ref{eq:latticecontain}) exist if and only if $\mu \equiv \nu \pmod n$, thus proving the lemma.
\end{proof}

This is almost enough already to prove proposition \ref{proposition:composition}, to finish the proof we observe two facts.
First, if $\mu \pmod m$, $\nu \pmod n$ satisfy $F(\mu) \equiv 0 \pmod m$, $F(\nu) \equiv 0 \pmod n$ and $\gcd(m,n) = 1$, then the Chinese remainder theorem gives a unique $\tilde{\mu} \pmod {mn}$ such that $\tilde{\mu} \equiv \mu \pmod m$ and $\tilde{\mu} \equiv \nu \pmod n$.
If $I$, $J$ are the ideals corresponding to $\mu \pmod m$, $\nu \pmod n$ and $\tilde{I}$ is the ideal corresponding to $\tilde{\mu} \pmod {mn}$, then by the lemma $\tilde{I} \subset I \cap J$.
Inspecting the norms of the ideals shows that in fact $\tilde{I} = I \cap J = IJ$ as claimed.

The second observation concerns degree one prime ideals $P$ corresponding to a root $\mu_1 \pmod p$, $p$ not dividing the discriminant of $F$.
If $\mu_k \pmod {p^k}$ is the root given by Hensel's lemma, i.e $\mu_k \pmod {p^k}$ is the unique residue class satisfying $F(\mu_k) \equiv 0 \pmod {p^k}$ and $\mu_k \equiv \mu_1 \pmod p$, then we claim that the ideal $P^k$ corresponds via theorem \ref{theorem:correspondence} to $\mu_k \pmod {p^k}$.
Indeed, since $\mathbb{Z}[\alpha] / P^k$ is additively cyclic, there is a corresponding root modulo $N(P^k) = p^k$, and this root must be $\mu_k \pmod {p^k}$ by lemma \ref{lemma:idealcontain}.
This finishes the proof of proposition \ref{proposition:composition}.

We now move on to proving proposition \ref{proposition:cubiccomposition}.
We remark that proving an extension of lemma \ref{lemma:idealcontain} directly is much more difficult in this setting, at least without first reducing to the case when $m_1$, $m_2$ are powers of the same prime number.
Since it is hard to avoid this reduction we proceed differently by instead working directly with the prime factorization of the ideals themselves.
The following lemma allows us to reduce to the case when the ideals have powers of the same prime as their norm.

\begin{lemma}
  \label{lemma:coprimecubiccomposition}
  Let $I$ and $J$ be ideals not divisible by any rational integers with $\gcd(N(I), N(J)) =1$, and let $\mu_1 \pmod {m_1}$, $\mu_2 \pmod {m_2}$ and $\nu_1 \pmod {n_1}$, $\nu_2 \pmod {n_2}$ be the corresponding roots via theorem \ref{theorem:cubiccorrespondence}, so in particular $N(I) = m_1^2m_2$ and $N(J) = n_1^2 n_2$ are coprime.
  Then $IJ$ is not divisible by rational integers and the corresponding roots are given modulo $m_1n_1$ and $m_2n_2$ by the Chinese remainder theorem applied to $\mu_1 \pmod {m_1}$, $\nu_1 \pmod{n_1}$ and $\mu_2 \pmod {m_2}$, $\nu_2 \pmod {n_2}$.
\end{lemma}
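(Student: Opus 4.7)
The plan is to split the lemma into two parts: first, that $IJ$ is not divisible by any rational integer; second, that its corresponding roots are the Chinese-remainder lifts of those for $I$ and $J$.

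For the first part, $\gcd(N(I), N(J)) = 1$ forces $I + J = \mathbb{Z}[\alpha]$, so $IJ = I \cap J$ and $N(IJ) = N(I) N(J)$. If a rational prime $p$ were to divide $IJ$, then $p\mathbb{Z}[\alpha] \subset IJ$, so every prime $\mathfrak{P}$ of $\mathbb{Z}[\alpha]$ above $p$ would divide $IJ$. But primes above $p$ can appear in at most one of $I, J$ by coprimality of norms, so $p\mathbb{Z}[\alpha]$ would in fact divide that single ideal, contradicting the assumption that neither $I$ nor $J$ is divisible by a rational integer.

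For the second part, define $\tilde{\mu}_1 \pmod{m_1 n_1}$ as the Chinese-remainder lift of $(\mu_1 \pmod{m_1}, \nu_1 \pmod{n_1})$ and $\tilde{\mu}_2 \pmod{m_2 n_2}$ as the lift of $(\mu_2, \nu_2)$, which is valid since $\gcd(m_i, n_j) = 1$ for all $i, j$ (from $\gcd(N(I), N(J)) = 1$). The congruences $F(\tilde{\mu}_i) \equiv 0$ follow by reduction, and $\gcd(m_1 n_1, m_2 n_2, \tilde{\mu}_1 - \tilde{\mu}_2) = 1$ reduces to the two gcd hypotheses on $I$ and $J$ combined with these pairwise coprimalities. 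Theorem \ref{theorem:cubiccorrespondence} then produces a unique ideal $K$ with parameters $(m_1 n_1, m_2 n_2, \tilde{\mu}_1, \tilde{\mu}_2, \tilde{\lambda})$ and $N(K) = (m_1 n_1)^2 (m_2 n_2) = N(IJ)$. To conclude $K = IJ$ it suffices to prove $K \subset I \cap J$, and by symmetry I only describe $K \subset I$. The last two generators of $K$ from (\ref{eq:idealcorrespondence}) lie in $I$ immediately: $m_1 n_1 m_2 n_2 = (n_1 n_2)(m_1 m_2)$ is a rational multiple of $m_1 m_2 \in I$, and $(m_1 n_1)\alpha - \tilde{\mu}_2 m_1 n_1$ differs from $n_1(m_1 \alpha - \mu_2 m_1) \in I$ by the rational integer $n_1 m_1(\mu_2 - \tilde{\mu}_2)$, which is a multiple of $m_1 m_2$ because $\tilde{\mu}_2 \equiv \mu_2 \pmod{m_2}$ and $\gcd(n_1, m_2) = 1$.

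The main obstacle is the first generator $\alpha^2 + (\tilde{\mu}_1 + a_1)\alpha + \tilde{\lambda}$. Subtracting the analogous generator of $I$ and using $m_1 \alpha \equiv \mu_2 m_1 \pmod{I}$ (valid since $m_1 \mid \tilde{\mu}_1 - \mu_1$) reduces the containment to the rational-integer congruence $(\tilde{\mu}_1 - \mu_1)\mu_2 + (\tilde{\lambda} - \lambda_I) \equiv 0 \pmod{m_1 m_2}$. Reducing the pair of $\lambda$-congruences (\ref{eq:lambdacongruence5}) for $\tilde{\lambda}$ modulo $m_1$ and modulo $m_2$, and using $\tilde{\mu}_1 \equiv \mu_1 \pmod{m_1}$, $\tilde{\mu}_2 \equiv \mu_2 \pmod{m_2}$, yields the required congruence modulo $\mathrm{lcm}(m_1, m_2)$. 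Closing the remaining gap modulo $\gcd(m_1, m_2)$ requires comparing the auxiliary parameters $\kappa_I$ and $\tilde{\kappa}$ appearing in the explicit formula (\ref{eq:lambdadef}) through their defining congruences (\ref{eq:gcdcongruence1}), and invoking the invertibility of $\mu_1 - \mu_2$ modulo $\gcd(m_1, m_2)$ granted by the hypothesis $\gcd(m_1, m_2, \mu_1 - \mu_2) = 1$. The symmetric argument gives $K \subset J$, finishing the identification $K = IJ$.
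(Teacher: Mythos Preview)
Your proposal is correct and follows essentially the same route as the paper: build the candidate ideal $K$ from the CRT lifts via Theorem~\ref{theorem:cubiccorrespondence}, verify $K\subset I$ and $K\subset J$ by checking the $\lambda$--congruence (the paper does this via the matrix equation~(\ref{eq:cubiclatticecontain}), you via the generators, but the content is identical), and conclude $K=IJ$ by comparing norms. Your separate opening argument that no rational prime divides $IJ$ is a pleasant addition but redundant, since once $K=IJ$ is established the non-divisibility is automatic from the form~(\ref{eq:idealcorrespondence}); the paper simply omits it. Both your sketch and the paper's leave the $\kappa$--comparison at the same level of detail: the paper pins it down by the specific choice~(\ref{eq:m1m2barchoice}) of $\overline{m_1},\overline{m_2}$, which makes the congruence $\kappa\equiv\tilde{\kappa}\pmod{\gcd(m_1,m_2)}$ transparent, and you may want to adopt that device rather than phrase it abstractly.
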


\begin{proof}
  Let $\tilde{\mu}_1 \pmod {m_1n_1}$ and $\tilde{\mu_2} \pmod {m_2n_2}$ be the roots obtained from $\mu_1 \pmod {m_1}$, $\nu_1 \pmod {n_1}$ and $\mu_2 \pmod{m_2}$, $\nu_2 \pmod {n_2}$ by the Chinese remainder theorem.
  Then since $\gcd(m_1n_1, m_2n_2) = \gcd(m_1,m_2)\gcd(n_1,n_2)$ is a factorization into coprime integers, we have $\gcd(m_1n_1, m_2n_2, \tilde{\mu}_1 - \tilde{\mu}_2) = 1$.
  By theorem \ref{theorem:cubiccorrespondence} there exists an ideal, say $\tilde{I}$ that has basis given by
  \begin{equation}
    \label{eq:tildeIbasis}
    \begin{pmatrix}
      1 & \tilde{\mu}_1 + a_1 & \tilde{\lambda} \\
      0 & m_1n_1 & -\tilde{\mu}_2 m_1n_1 \\
      0 & 0 & m_1n_1m_2n_2
    \end{pmatrix}
    \begin{pmatrix}
      \alpha^2 \\
      \alpha \\
      1
    \end{pmatrix}
    .
  \end{equation}
  We claim that $\tilde{I} = IJ$, and we first show that $\tilde{I} \subset I$, which is equivalent to showing that there exists an integral matrix $A$ such that
  \begin{equation}
    \label{eq:cubiclatticecontain}
    \begin{pmatrix}
      1 & \tilde{\mu}_1 + a_1 & \tilde{\lambda} \\
      0 & m_1n_1 & -\tilde{\mu}_2 m_1n_1 \\
      0 & 0 & m_1n_1m_2n_2
    \end{pmatrix}
    = A
    \begin{pmatrix}
      1 & \mu_1 + a_1 & \lambda \\
      0 & m_1  & -\mu_2m_1 \\
      0 & 0 & m_1m_2
    \end{pmatrix}
    .
  \end{equation}

  By changing $A$ as necessary, we can change $\mu_1$ and $\mu_2$ by multiples of $m_1$ and $m_2$ so that in fact $\mu_1 = \tilde{\mu}_1$ and $\mu_2 = \tilde{m}_2$, where we have fixed representatives of $\tilde{\mu}_1 $ and $\tilde{\mu}_2$.
  This of course changes the $\lambda$ given by theorem \ref{theorem:cubiccorrespondence}, but we still denote this new residue class modulo $m_1m_2$ by $\lambda$.
  Having made these replacements, $A$ can be seen to have the form
  \begin{equation}
    \label{eq:Aform3}
    A =
    \begin{pmatrix}
      1 & 0 & * \\
      0 & n_1 & 0 \\
      0 & 0 & n_1n_2
    \end{pmatrix}
    ,
  \end{equation}
  and we observe that the $*$ entry can be chosen so that (\ref{eq:cubiclatticecontain}) holds if $\tilde{\lambda} \equiv \lambda \pmod {m_1m_2}$.

  We recall from (\ref{eq:lambdadef}) that
  \begin{flalign}
    \label{eq:lambdarecall}
    \lambda & \equiv (\tilde{\mu}_1^2 + a_1\tilde{\mu}_1 + a_2) \frac{\overline{m_2}m_2}{\gcd(m_1,m_2)} - (\tilde{\mu}_2^2 + \tilde{\mu}_1\tilde{\mu}_2 + a_1\tilde{\mu}_2 ) \frac{\overline{m}_1m_1}{\gcd(m_1,m_2)} \\ \nonumber
    & \quad + \kappa \frac{m_1m_2}{\gcd(m_1,m_2)} \pmod {m_1m_2}
  \end{flalign}
  where $\overline{m_1}, \overline{m_2}$ are defined by
  \begin{equation}
    \label{eq:m1m2barrecall}
    \frac{\overline{m_1}m_1}{\gcd(m_1,m_2)} + \frac{\overline{m_2}m_2}{\gcd(m_1,m_2)} = 1
  \end{equation}
  and $\kappa$ is defined by
  \begin{equation}
    \label{eq:kapparecall}
    \frac{F(\tilde{\mu}_1)}{m_1} \overline{m_2} + \frac{F(\tilde{\mu}_2)}{m_2} \overline{m_1} + (\tilde{\mu}_1 - \tilde{\mu}_2) \kappa \equiv 0 \pmod {\gcd(m_1,m_2)}.
  \end{equation}
  We also recall that due to the definition of $\kappa$, $\lambda$ is independent of the choice of $\overline{m_1}, \overline{m_2}$.
  In fact we choose
  \begin{equation}
    \label{eq:m1m2barchoice}
    \overline{m_1} = \frac{\overline{m_1n_1}n_1}{\gcd(n_1,n_2)}, \quad \overline{m_2} = \frac{\overline{m_2n_2}n_2}{\gcd(n_1,n_2)},
  \end{equation}
  where $\overline{m_1n_1}, \overline{m_2n_2}$ are so that
  \begin{equation}
    \label{eq:m1n1m2n2bardef}
    \frac{\overline{m_1n_1}m_1n_1}{\gcd(m_1n_1,m_2n_2)} + \frac{\overline{m_2n_2}m_2n_2}{\gcd(m_1n_1,m_2n_2)} = 1.
  \end{equation}
  With this choice, we have $\kappa \equiv \tilde{\kappa} \pmod {\gcd(m_1,n_1)}$, where $\tilde{\kappa}$ is used in the definition of $\tilde{\lambda}$, and we have $\tilde{\lambda} \equiv \lambda \pmod {m_1m_2}$ as required.

  By the same arguments, we have also that $\tilde{I} \subset J$.
  Since the norm of $\tilde{I}$ is $m_1^2n_1^2m_2n_2 = N(I)N(J)$, and since $I$ and $J$ are coprime, we have $\tilde{I} = IJ$, proving the lemma.
\end{proof}

We now consider the setting when a rational prime $p$ factors in $\mathbb{Z}[\alpha]$ as the product of three degree one prime ideals $P_1, P_2, P_3$.
As discussed in the introduction, these degree one prime ideals correspond to three distinct roots $\mu_1, \mu_2, \mu_3 \pmod p$. 

\begin{lemma}
  \label{lemma:primecubiccomposition}
  The ideal $P_2^kP_3^l$ corresponds to $\mu_1$ lifted to a root modulo $p^l$ and $\mu_2$ lifted to a root modulo $p^{k-l}$.
\end{lemma}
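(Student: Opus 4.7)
The plan is to verify directly that the ideal $I = P_2^k P_3^l$ corresponds under theorem~\ref{theorem:cubiccorrespondence} to the data described in the lemma. Because $P_2$ and $P_3$ are distinct maximal ideals, $P_2^k$ and $P_3^l$ are coprime, so by the Chinese Remainder Theorem together with $\mathbb{Z}[\alpha]/P_i^n \cong \mathbb{Z}/p^n\mathbb{Z}$ (valid since each $P_i$ is unramified of residue degree one, as $p$ is coprime to the discriminant), we have $\mathbb{Z}[\alpha]/I \cong \mathbb{Z}/p^k\mathbb{Z} \oplus \mathbb{Z}/p^l\mathbb{Z}$. Moreover $I$ is not divisible by any rational integer: were $(p) = P_1 P_2 P_3$ to divide $I$, then $P_1$ would have to divide $I$, which it does not. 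So theorem~\ref{theorem:cubiccorrespondence} assigns to $I$ parameters $(m_1, m_2, \mu_1', \mu_2', \lambda)$, and proposition~\ref{proposition:invariantfactors} forces the invariant factors $(1, m_1, m_1m_2)$ to equal $(1, p^l, p^k)$, giving $m_1 = p^l$ and $m_2 = p^{k-l}$.

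Next I would identify $\mu_2'$ by reducing the basis element $\beta_2 = p^l(\alpha - \mu_2')$ of $I$ modulo $P_2^k$. Under $\mathbb{Z}[\alpha]/P_2^k \cong \mathbb{Z}/p^k\mathbb{Z}$ with $\alpha \mapsto \nu$, the Hensel lift of $\mu_2$ to modulus $p^k$, vanishing of $\beta_2$ gives $p^l(\nu - \mu_2') \equiv 0 \pmod{p^k}$, so $\mu_2' \equiv \nu \pmod{p^{k-l}}$, which is precisely the Hensel lift of $\mu_2$ to modulus $p^{k-l}$ as claimed.

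The main content is identifying $\mu_1'$. I would reduce $\beta_1 = \alpha^2 + (\mu_1' + a_1)\alpha + \lambda \in I$ separately modulo $P_2^k$ and $P_3^l$, with $\alpha$ mapping to $\nu$ and to $\eta$ respectively, where $\eta$ is the Hensel lift of $\mu_3$ to modulus $p^l$. The reductions give $\nu^2 + (\mu_1' + a_1)\nu + \lambda \equiv 0 \pmod{p^k}$ and $\eta^2 + (\mu_1' + a_1)\eta + \lambda \equiv 0 \pmod{p^l}$. Passing to mod $p^l$ and subtracting yields
\[
(\nu - \eta)(\nu + \eta + \mu_1' + a_1) \equiv 0 \pmod{p^l}.
\]
Since $\nu - \eta \equiv \mu_2 - \mu_3 \not\equiv 0 \pmod p$, $\nu - \eta$ is a $p$-adic unit, so $\mu_1' \equiv -a_1 - \nu - \eta \pmod{p^l}$. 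Hensel's lemma gives the factorization $F(X) = \prod_{i=1}^{3}(X - \tilde\mu_i)$ in $\mathbb{Z}_p[X]$, where $\tilde\mu_i$ is the full Hensel lift of $\mu_i$, and Vieta's formula yields $\tilde\mu_1 + \tilde\mu_2 + \tilde\mu_3 = -a_1$. Reducing modulo $p^l$ gives $\tilde\mu_1 \equiv -a_1 - \nu - \eta \equiv \mu_1' \pmod{p^l}$, exhibiting $\mu_1'$ as the Hensel lift of $\mu_1$ to modulus $p^l$.

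The main obstacle is bookkeeping: keeping straight which Hensel lift is meant at which precision and tied to which prime. The crux is that although $I$ involves only $P_2$ and $P_3$, Vieta's formula in $\mathbb{Z}_p[X]$ ties the parameter $\mu_1'$ back to $P_1$, explaining why it reduces to $\mu_1$ rather than to $\mu_2$ or $\mu_3$.
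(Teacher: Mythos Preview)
Your proof is correct and takes a genuinely different route from the paper's. The paper argues in the opposite direction: it \emph{constructs} the ideal $I$ from the lifted roots $\mu_1 \pmod{p^l}$, $\mu_2 \pmod{p^{k-l}}$ via theorem~\ref{theorem:cubiccorrespondence}, and then verifies the containments $I \subset P_2^k$ and $I \subset P_3^l$ by finding explicit integral change-of-basis matrices. This requires invoking the formula (\ref{eq:lambdadef}) for $\lambda$ and splitting into the cases $l \le k-l$ and $l > k-l$, checking the needed congruences for $\lambda$ in each case. Your argument instead starts from $I = P_2^k P_3^l$, determines $m_1, m_2$ from the invariant factors, and then identifies $\mu_1', \mu_2'$ by evaluating the Hermite-form basis elements under the ring homomorphisms $\mathbb{Z}[\alpha] \to \mathbb{Z}/p^k\mathbb{Z}$ and $\mathbb{Z}/p^l\mathbb{Z}$ that kill $P_2^k$ and $P_3^l$. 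The key step --- recovering $\mu_1'$ via Vieta's relation $\tilde\mu_1 + \tilde\mu_2 + \tilde\mu_3 = -a_1$ in $\mathbb{Z}_p$ --- replaces the paper's case analysis by a single clean identity, and your approach never touches the explicit $\lambda$ formula. The paper's method has the virtue of being entirely self-contained within the matrix framework already set up; yours is shorter and explains more transparently why the ``missing'' root $\mu_1$ appears in the parameters of an ideal built only from $P_2$ and $P_3$.
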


\begin{proof}
  Abusing notation slightly, we use $\mu_1, \mu_2, \mu_3$ to denote the lifted roots modulo $p^{2k}$ (or modulo arbitrary powers of primes if one works $p$-adicly).
  Now since $\mu_1, \mu_2$ are distinct, theorem \ref{theorem:cubiccorrespondence} gives an ideal, say $I$, with the basis
  \begin{equation}
    \label{eq:deg1deg1basis}
    \begin{pmatrix}
      1 & \mu_1 + a_1 & \lambda \\
      0 & p^l & -\mu_2 p^l \\
      0 & 0 & p^k
    \end{pmatrix}
    \begin{pmatrix}
      \alpha^2 \\
      \alpha \\
      1
    \end{pmatrix}
    .
  \end{equation}
  The claim is that $I = P_2^kP_3^l$, and to show this we show separately that $I \subset P_2^k$ and $I\subset P_3^l$.
  The claim follows from this since $P_2^k$ and $P_3^l$ are coprime and $N(I) = p^{k+1} = N(P_2^kP_3^l)$.

  We have $I \subset P_2^k$ if and only if there exists an integral matrix $A$ such that
  \begin{equation}
    \label{eq:Adef}
    \begin{pmatrix}
      1 & \mu_1 + a_1 & \lambda \\
      0 & p^l & -\mu_2 p^l \\
      0 & 0 & p^k
    \end{pmatrix}
    = A
    \begin{pmatrix}
      1 & a_1 & -\mu_2^2 - a_1\mu_2 \\
      0 & 1 & - \mu_2 \\
      0 & 0 & p^k
    \end{pmatrix}
    .
  \end{equation}
  Such an $A$ must have the form
  \begin{equation}
    \label{eq:Aform1}
    A =
    \begin{pmatrix}
      1 & \mu_1 & * \\
      0 & p^l & 0 \\
      0 & 0 & 1
    \end{pmatrix}
    ,
  \end{equation}
  and we observe that the $*$ entry can be chosen so that (\ref{eq:Adef}) holds if and only if
  \begin{equation}
    \label{eq:lambdacongruence}
    \lambda \equiv - \mu_1\mu_2 - \mu_2^2 - a_1\mu_2 \pmod {p^k}.
  \end{equation}

  To verify (\ref{eq:lambdacongruence}) we use (\ref{eq:lambdadef}) and consider cases $l \leq k - l$ and $ l > k-l$ separately.
  In the first case we have $\gcd(p^l, p^{k-l}) = p^l$, and so
  \begin{equation}
    \label{eq:lambdarecall1}
    \lambda \equiv -(\mu_2^2 + \mu_1\mu_2 + a_1\mu_2) + \kappa p^{k-l} \pmod {p^k},
  \end{equation}
  where $\kappa $ satisfies
  \begin{equation}
    \label{eq:kapparecall1}
    (\mu_1 - \mu_2) \kappa + \frac{F(\mu_2)}{p^{k-l}} \equiv 0 \pmod {p^l}.
  \end{equation}
  By the way $\mu_2$ was chosen, we have $F(\mu_2) \equiv 0 \pmod {p^k}$, and so $\kappa \equiv 0 \pmod {p^l}$ satisfies (\ref{eq:kapparecall1}) and so (\ref{eq:lambdacongruence}) holds.
  When $l > k-l$, we have $\gcd(p^l, p^{k-1}) = p^{k-l}$ and
  \begin{equation}
    \label{eq:lambdarecall2}
    \lambda \equiv \mu_1^2 + a_1\mu_1 + a_2 + \kappa p^{l} \pmod {p^k}.
  \end{equation}
  This time by the way $\mu_1$ was chosen, we find that $\kappa \equiv 0 \pmod {p^{k-l}}$, and so (\ref{eq:lambdacongruence}) reduces to
  \begin{equation}
    \label{eq:lambdacongruence1}
    \mu_1^2 + \mu_1\mu_2 + \mu_2^2 + a_1(\mu_1 + \mu_2) + a_2 = \frac{F(\mu_1) - F(\mu_2)}{\mu_1 - \mu_2} \equiv 0 \pmod {p^k}.
  \end{equation}
  Since $\mu_1 - \mu_2$ is not zero, (\ref{eq:lambdacongruence1}) indeed holds.

  It now remains to verify that $P_3^l$ contains $I$.
  As before, we have $I \subset P_3^l$ if and only if there is an integral matrix $A$ such that
  \begin{equation}
    \label{eq:Adef1}
    \begin{pmatrix}
      1 & \mu_1 + a_1 & \lambda \\
      0 & p^l & -\mu_2 p^l \\
      0 & 0 & p^k
    \end{pmatrix}
    = A
    \begin{pmatrix}
      1 & a_1 & -\mu_3^2 - a_1\mu_3 \\
      0 & 1 & -\mu_3 \\
      0 & 0 & p^{l}
    \end{pmatrix}
    .
  \end{equation}
  Such an $A$ must have the form
  \begin{equation}
    \label{eq:Aform2}
    A = 
    \begin{pmatrix}
      1 & \mu_1 & * \\
      0 & p^l & (\mu_3 - \mu_2)p^l \\
      0 & 0 & p^{k-l}
    \end{pmatrix}
    ,
  \end{equation}
  and the $*$ entry can be chosen so that (\ref{eq:Adef1}) holds if and only if
  \begin{equation}
    \label{eq:lambdacongruence2}
    \lambda \equiv -\mu_1\mu_3 - \mu_3^2 - a_1\mu_3 \pmod {p^l}.
  \end{equation}
  Again we split into cases $l\leq k-l$ and $l > k-l$.
  In the first case, we have from (\ref{eq:lambdarecall1}) it is enough to verify that
  \begin{equation}
    \label{eq:lambdacongruence3}
    (\mu_1\mu_2 + \mu_2^2 + a_1\mu_2) - (\mu_1\mu_3 + \mu_3^2 + a_1 \mu_3) = (\mu_2 - \mu_3)(\mu_1 + \mu_2 + \mu_3 + a_1) \equiv 0 \pmod {p^l},
  \end{equation}
  which is indeed the case.
  When $l > k-l$, we have from (\ref{eq:lambdarecall2}) that it is enough to verify
  \begin{equation}
    \label{eq:lambdacongruence4}
    \mu_1^2 + \mu_1\mu_3 + \mu_3^2 + a_1(\mu_1 + \mu_3) + a_2 = \frac{F(\mu_1) - F(\mu_3)}{\mu_1 - \mu_3} \equiv 0 \pmod {p^l},
  \end{equation}
  which is again the case.
\end{proof}

\newpage

\bibliographystyle{plain}
\bibliography{references.bib}

\end{document}